\documentclass[11pt]{paper}
\usepackage{fullpage}

\usepackage {amssymb}
\usepackage {amsmath}
\usepackage {amsthm}
\usepackage [usenames] {color}

\usepackage{tikz}
\usepackage{pgf}
\usepackage{url}
\usetikzlibrary{arrows,automata}
\usetikzlibrary{matrix,arrows,decorations.pathmorphing}
\usepackage[latin1]{inputenc}
\usepackage{tikz,fullpage}
\usepackage{tkz-berge}
\usepackage{verbatim}
\usetikzlibrary{arrows,shapes}
\usepackage{amssymb}
\usepackage{bm}

\usepackage {enumerate}
\usepackage {plaatjes}
\usepackage {cite}
\usepackage{float}
\usepackage [left,pagewise] {lineno}
\usepackage {xspace}
\usepackage {wrapfig}

\definecolor {infocolor} {rgb} {0.6,0.6,0.6}
%\linenumbers

\definecolor {sepia} {rgb} {0.75,0.30,0.15}
\everymath{\color{sepia}}

\floatstyle{ruled}
\newfloat{algorithm}{thp}{alg}
\floatname{algorithm}{Algorithm}

\setlength{\fboxsep}{.5pt}

\newcommand {\Go}{G_{\rm orig}}
\newcommand {\Gs}{G_{\rm rvrs}}

\newcommand {\mathset} [1] {\ensuremath {\mathbb {#1}}}
\newcommand {\R} {\mathset {R}}

\newtheorem {theorem} {Theorem}[section]

\newtheorem {corollary}[theorem] {Corollary}

\newtheorem{lemma}[theorem]{Lemma}
\newtheorem {proposition}[theorem] {Proposition}

\def\Vu{V_{\rm u}}
\def\Vs{V_{\rm s}}

\title{\Large Minimum Input Selection for Structural Controllability}

\author{Alex Olshevsky\thanks{%
   Department of Industrial and Enterprise Systems Engineering,
   University of Illinois at Urbana-Champaign, Urbana, IL, USA, 
   \textsl{aolshev2@illinois.edu}.
  }
}
\date{}
\begin{document}

\maketitle

\begin{abstract} Given a linear system $\dot{x} = Ax$, where $A$ is an $n \times n$ matrix with $m$ nonzero entries,  we consider the problem of finding the smallest set of state variables to affect with an input  so that the resulting system is structurally controllable. We further assume we are given a set of ``forbidden state variables'' $F$ which cannot be affected with an input and which we have to avoid in our selection. Our main result is that this problem can be solved deterministically in $O(n+m \sqrt{n})$ operations. 

%\medskip
%
%Formally, given the linear system $$\dot{x} = Ax$$ where $A$ is an $n \times n$ matrix with $m$ nonzero entries and further given a set $F \subset \{1, %\ldots, n\}$, our goal is to find a set $I \subset \{1, \ldots, n\}$ of minimum cardinality such that $I \cap F = \emptyset$ and
%$$\dot{x}=Ax+B(I) u$$ is structurally controllable, where $B(I)$ is a diagonal matrix satisfying $B_{ii}(I) \neq 0$ if and only if $i \in I$.  
%
%\medskip
%
%Our main result is that $I$ can found  in $O(m \sqrt{n})$ operations.  
\end{abstract}

\section{Introduction}  

 This paper is about the problem of controlling linear systems from a small number of inputs. Our motivation comes from recent interest in the control of systems which are large-scale in the sense of being modeled with a very large number of variables.  
We mention the power grid as one example \cite{Z09,CI12} and biological networks within the human body as another \cite{slotine, R-12}. Since these systems contain a very large number of interacting parts, it does not appear to be sensible to design control strategies for them which are able to affect most (or even many) of these parts. Consequently we study here the general possibility
of  controlling systems from only a few inputs.

Given a linear system,
\begin{equation} \label{withoutinput} \dot{x} = Ax, \end{equation} where $A \in \R^{n \times n}$ is given, we consider whether it is possible to choose an input matrix $B \in \R^{n \times n}$ such that the 
resulting system with input, 
\begin{equation} \label{withinput} \dot{x} = Ax + Bu, \end{equation} is controllable, and the matrix $B$ has the smallest possible 
number of rows with a nonzero entry. Note that each row of $B$ with a nonzero entry corresponds to a variable of the system of Eq. (\ref{withoutinput}) affected with an input. In addition, we assume that we are also given a set $F \subset \{1, \ldots, n\}$ consisting of variables which cannot be affected with an input; this means that the corresponding rows of $B$ have to consist entirely of zero entries. Intuitively, some variables of the system may be out of reach of any actuator or control strategy we design and should not be considered as possible input locations. 

%There are various measures of sparsity for the matrix $B$ which make physical sense in the context of this problem. One possibility is to try to find the a %matrix $B$ with few nonzero entries. Another is the search for a $B$ with the smallest number of columns with a nonzero entries, corresponding to the %number of input signals (i.e., components of the vector $u$) which affect the system; in a sense, this measures how many distinct signals are needed to %control the system. Arguably the most natural choice is to look for a $B$ with the smallest number of rows with a nonzero entry, which straightforwardly %corresponds to the fewest variables $x_i$ in Eq. (\ref{withinput}) affected with an input. 

Unfortunately, it was recently observed  that this problem is NP-hard even in the case when $F = \emptyset$ \cite{previous}. We are therefore forced to consider possible relaxations of the problem which may be solvable in polynomial time. One approach, studied in \cite{previous}, is to study approximation algorithms, i.e., to try to find a $B$ in polynomial time which does not have too many nonzero rows compared to the optimal matrix.
%It was proved in \cite{previous} that an $O(\log n)$ multiplicative approximation to the sparsest $B$ may be found in polynomial time, and conditional on %the $P \neq NP$ conjecture improving on this is not possible. 

In this paper we consider a different relaxation of the problem, pioneered by the recent papers \cite{slotine} and \cite{pequito-main, pequito-main-acc, dion-main}: we would like Eq. (\ref{withinput}) to be structurally controllable rather than controllable. A formal definition of structural controllability is given in Section \ref{background}, but loosely speaking this means we are considering controllability for arbitrarily small perturbations of the nonzero entries of the matrix $A$. In many cases, the nonzero entries of the matrix $A$ are not precisely known and little is lost by instead considering system controllability after an arbitrary small perturbation of them. We will refer to this (i.e., to the problem of finding $B$ with fewest nonzero rows making Eq. (\ref{withinput}) structurally controllable) as the minimum structural controllability problem.

%This paper \cite{slotine} considered the related problem of minimizing the 
%number of rows of $B$ with a nonzero entry 

\subsection{Previous work and our results in this paper} 

Similar questions have recently been considered in the recent works of \cite{slotine} and \cite{pequito-main, pequito-main-acc, dion-main} in the setting when $F= \emptyset$, i.e., there are no forbidden variables. In \cite{slotine}, the question of finding a $B$ with the smallest number of columns with a nonzero entry was considered. The number of columns of $B$ with a nonzero enry corresponds to the number of components of the vector $u$ in Eq. (\ref{withinput}) which end up affecting the system; intuitively, this is a measure of the number of independent signals which are needed to control the network. In the language of \cite{slotine}, each such entry of $u$ corresponds to a ``driver node.'' It was shown in \cite{slotine} that if $A$ has with $m$ nonzero entries, then a matrix $B$ rendering Eq. (\ref{withinput}) structurally controllable with fewest number of columns with a nonzero entry can be found in 
$O(n+m \sqrt{n})$ operations. 

There is a close connection between structural controllability and the problem of finding maximum matchings in a graph, as pointed out by \cite{murota, kyber, slotine} and as we reprise in Section \ref{sec:combinatorial} in this paper. The significance of the  running time $O(n + m \sqrt{n})$ achieved by \cite{slotine} is that it is  the same as the best currently known deterministic complexity for finding a maximum cardinality matching in a bipartite graph\footnote{To be more precise, when a graph is given as an ``adjacency list,'' i.e., a table with the $i$'th row containing an enumeration of the neighbors of $i$, the best known deterministic complexity of finding a maximum matching is 
$O(n+m \sqrt{n})$ using the Hopcroft-Karp algorithm \cite{hk}.}.

Although at first glance the problem of finding a $B$ with fewest number of columns with a nonzero entry appears extremely similar to the minimum structural controllability problem, the two problems are quite different. Intuitively, there is no reason why the number of independent signals needed to control a network should have a close relationship with the number of variables of the system which needs to be affected. 

To illustrate this,  consider the
case when $A$ is diagonal with every diagonal entry nonzero. It is immediate that the system is structurally controllable with a $B$ with only a single
column with nonzero entries, i.e., we may take the first column of $B$ to be the all-ones vector and set the remaining entries of $B$ to zero\footnote{One way to prove quickly that this system is structurally controllable is to argue that the nonzero diagonal entries of $A$ can be 
perturbed to be distinct and then the controllability matrix contains an invertible Vandermonde matrix as a submatrix. Alternatively, this also follows immediately from \cite{slotine} or the classic results of \cite{lin}.}. On the other hand, the smallest number of rows with  nonzero entries in a $B$ making such a system structurally controllable is $n$: since there is no coupling between variables, it is immediate that every variable needs to be affected. 

To our knowledge, the first papers considering the minimum structural controllability problem were the recent works \cite{dion-main, dion1, dion2, dionrecent} and \cite{pequito-main, pequito-main-acc}. In \cite{dion-main, dion1, dion2} graph theoretic conditions and bounds were given for several variations of the minimal structural controllability problem, ultimately bounding the number of additional inputs needed in terms of critical connection components and rank defects in corresponding graphs. An explicit characterization of the solution of the minimal structural controllability problem with a single input was given in \cite{dionrecent}. In \cite{pequito-main-acc}, an algorithm for the minimal structural controllability problem was proposed. Although formally this paper considered the problem of finding a matrix $B$ with fewest nonzero entries in total, it is not hard to see this is equivalent to the minimum structural controllability problem (see Section \ref{background} for a discussion of this). This algorithm took $O(m n^{1.5})$ operations to produce a solution. The paper \cite{pequito-main} then provided an $O(n^3)$ algorithm for this problem. 

Our contribution in this paper is to provide an algorithm for the minimum structural controllability problem (additionally with an arbitrary set of forbidden variables $F \subset \{1, \ldots, n\}$) which runs faster,
namely in $O(n + m\sqrt{n})$ operations. This is always faster by a factor of $\sqrt{n}$ compared to the previously best running times in \cite{pequito-main, pequito-main-acc} and is better by a factor of $n$ when the graph corresponding to $A$ is sparse, i.e., if $m=O(n)$. More importantly, our finding is that it is possible to solve the minimum structural controllability problem as fast as the currently-best complexity of deterministically finding a maximum matching in a bipartite graph.

\subsection{Literature overview} The concept of structural controllability was introduced in the groundbreaking paper of Lin \cite{lin}, which provided a combinatorial necessary and sufficient condition for a system with given matrices $A,B$ to be structurally controllable. Lin's work was elaborated upon in a number of now-classic works in the 1970s and 1980s. We mention specifically \cite{sp,structured-followup-76a, structured-followup-76b, structured-followup-77, structured-followup-79a, structured-followup-79b, structured-followup-81, structured-followup-82, structured-followup-84, structured-followup-84b, structured-followup-86} which refined Lin's work in a number of ways. 

Shortly after Lin's paper,  Shields and Pearson \cite{sp} generalized Lin's result to the case when $B$ is a matrix (Lin had only studied the case when $B$ belongs to $ \R^{n \times 1}$); alternative, shorter, proofs of Lin's main results were provided by Glover and Silverman \cite{structured-followup-76b} as well as by Hosoe and Matsumoto \cite{structured-followup-79a}; Corfmat and Morse considered the case when the $A$ and $B$ were parametrized \cite{structured-followup-76a};   stronger notions of structural controllability was  proposed by Mayeda and Yamada \cite{structured-followup-79b}  and Willems \cite{structured-followup-86}; and the related notion of structurally fixed modes was studied by Sezer and Siljak \cite{structured-followup-81, structured-followup-84} as well as Papadimitriou and Tsitsiklis \cite{structured-followup-84b}. We are not able to survey the entire classic literature on the subject and instead point the reader to the relatively recent survey \cite{structured-survey}.  

There has been  considerable contemporary interest in structural controllability as well as minimum controllability problems as a result of the recent Nature paper of Liu, Slotine, and Barabasi\cite{slotine}. We mention \cite{LSB13} by the same authors which studied the applications of this framework to the observability of biological networks, as well as \cite{jia, LSB12, pljb12} by the same research group which examined the effects of network statistics on controllability. We have already described the recent works of Commault and Dion \cite{dion-main, dionrecent} and Pequito, Kar, and Aguiar \cite{pequito-main, pequito-main-acc} which are the most closely related papers to this work. The earliest reference on such problems we are aware of is the work of Simon and Mitter from 1960's \cite{simon-mitter} which considers synthesizing observers which take as few as possible measurements of the state. We also mention \cite{npc} which studies whether (non-structural) minimum controllability problems are NP-complete as well as \cite{mc} which studies application of controllability problems to model checking. Finally, structural controllability over finite fields was investigated by Sundaram and Hadjicostis \cite{shreyas}.

There has also been much interest in input selection for strong structural controllability problems (introduced in the 1970s by Mayeda and Yamada \cite{structured-followup-79b}) wherein the requirements to be satisfied are more stringent, namely that the system has to be controllable for arbitrary perturbations to its nonzero entries. Unfortunately, it turns out that in the setting of strong structural controllability, input selection problems tend to be NP-hard; two recent references establishing such results are \cite{chapman} and \cite{nima} (although \cite{pequito-dedicated} demonstrates that some variations of minimal strong structural controllability problems are nevertheless polynomial time).

A closely related strand of work studies input selection for minimum-energy control; we refer the reader to \cite{bullo, Motter1, Motter2, lygeros, lygeros2}. For multi-agent systems with nearest-neighbor interactions, controllability was investigated nearly a decade ago by Tanner \cite{t04} and Ji, Muhammed, and Egerstedt \cite{Eg3} with recent work in \cite{cao-recent, notar1, notar2, chapman2, nabi, Eg4}. 

We remark that minimal controllability problems such as the one we consider here are closely related to the recent literature on network controllability which seeks to relate
graph-theoretic properties of network to controllability. The development of easily optimizable necessary and sufficient conditions for controllability properties of networks would have
immediate consequences for the input selection problems of the kind we consider. However, in the non-structural case, such conditions appear to be challenging to obtain, though 
much can be said in some particular cases. We refer the reader to \cite{Eg2, nabi, notar1, notar2} as well as the recent survey \cite{Eg1} which provides an overview of the area. 

\section{Problem statement and our result\label{background}}

We now give a formal statement of the problem we will be considering as well as of our main result. 
We begin with a brief introduction to the notion of structural controllability.

\bigskip

\medskip

We define the zero pattern of a matrix $P$, denoted by $Z(P)$, to be the set of entries $(i,j)$ such that $P_{ij}=0$. 
Given two matrices $A, B$  the linear system of Eq. (\ref{withinput})  is called structurally controllable if there exist matrices $A',B'$ with the same dimensions as $A,B$, which satisfy \begin{eqnarray*} Z(A) & \subset&  Z(A') \\
Z(B) & \subset & Z(B') 
\end{eqnarray*}   such that the linear system \begin{equation} \label{perturb} \dot{x} = A' x + B' u \end{equation} is controllable.
%, i.e., ${\rm rank} \left( \begin{array}{ccccc} B' & AB' & A^2 B' & \cdots & A^{n-1} B' \end{array} \right) = n$. 
The concept of structrual controllability was introduced in the pioneering work of Lin \cite{lin}, and it was shown in \cite{lin,sp} that if Eq. (\ref{withinput}) is structurally controllable, then in fact the linear system of Eq. (\ref{perturb}) is controllable for allmost all pairs of matrices $A',B'$ whose zero sets contain the zero sets of $A$ and $B$. In particular, if Eq. (\ref{withinput}) is structurally controllable, then it is possible to perturb the nonzero entries of $A$ and $B$ by an arbitrarily small amount and obtain a controllable system. 

\bigskip

\medskip

Here we will be concerned with what we call the ``minimum structural controllability problem,'' which we describe now (actually, we describe a particular version of the problem which we will see is equivalent to the general case). Given a matrix $A \in \R^{n \times n}$ and set of forbidden variables $F \subset \{1, \ldots, n\}$ we seek to find a set $I \subset \{1, \ldots, n\}$ of minimum cardinality such that $I \cap F = \emptyset$ and
\begin{equation} \label{sys-input} \dot{x}=Ax+B(I) u\end{equation} is structurally controllable, where $B(I)$ is some diagonal matrix  satisfying \begin{equation} \label{bcond} B_{ii}(I) \neq 0 \mbox{ if and only if } i \in I. \end{equation}  Observe that each nonzero diagonal entry of $B(I)$ corresponds to a variable of the linear system $\dot{x}=Ax$ affected with an input, while each zero diagonal entry corresponds to a variable unaffected.
%\footnote{We could also consider the question of finding a (non-diagonal) $B$ making the system structurally controllable with fewest %nonzero entries, or with fewest number of rows with a nonzero entry (corresponding to fewest variables of $\dot{x}=Ax$ affected). 
%
%However, it is immediate that both of these variations are equivalent to the minimum structural controllability problem we consider %here. Indeed, if $\dot{x} = Ax + \widehat{B} u$ is structurally controllable, then letting $J$ be the set of rows of $\widehat{B}$ with a %nonzero entry, we immediately have that $\dot{x} = Ax + B(J) u$ is structurally controllable. It follows that we lose nothing by %restricting our search to diagonal matrices as we do here.}. 

Note that the minimum structural controllability problem may not have a solution, for example if $F = \{1, \ldots, n\}$.  When a solution does exist, we will adopt the convention of saying the minimum structural controllability problem is solvable. Furthermore, note that the actual nonzero diagonal values of $B(I)$ do not matter, i.e.,  if Eq. (\ref{sys-input}) is structurally controllable with one diagonal $B(I)$
satisfying Eq. (\ref{bcond}) then it is structurally controllable with all such $B(I)$\footnote{Indeed, since the definition of structural controllability is based on arbitrary perturbation of the nonzero entries of $A,B$, the actual values of those entries never matter.}. 

We remark that this is equivalent to the problem of finding $B$ having the fewest number of rows with a nonzero entry such that Eq. (\ref{withinput}) is structurally controllable. Indeed, given any $B$ making Eq. (\ref{withinput}) structurally controllable, we can simply set $I$ to be the set of rows of $B$ with a nonzero entry, and then any matrix $B(I)$ 
satisfying Eq. (\ref{bcond}) renders Eq. (\ref{withinput}) structurally controllable. Thus nothing is lost by searching for diagonal matrices $B$. 

Furthermore, the minimum structural controllability problem is also equivalent to the problem of finding $B$ having fewest nonzero entries making Eq. (\ref{withinput}) structurally controllable. The reasoning is the same as in the previous paragraph: given $B$, define $I$ once again to be the set
of rows of $B$ with a nonzero entry, and we then we have that $B(I)$ cannot have
more nonzero entries than $B$.

\bigskip

\medskip

This paper analyzes the complexity of solving the minimum structural controllability problem in terms of the problem parameters, which are $n$ and $m$ (recall these are, respectively, the dimension of $A$ and the number of nonzero entries in $A$). We assume that $A$ is given to us in the form of a list of of all the entries $(i,j)$ such that $A_{ij} \neq 0$; and the set $F$ of forbidden nodes is given to us as a list of entries in $\{1, \ldots, n\}$. We will use the standard unit-cost RAM model of computation. For convenience, we define 
the graph $G(A)$ to be the directed graph with the vertex  
set $\{1, \ldots,n\}$  and edge set $E(A) = \{(i,j) ~|~ A_{ji} \neq 0\}$. We will refer to $G(A)$ as the adjacency graph of the matrix $A$.

As previously mentioned, our main result is an algorithm which finds a set $I$ asked for by the minimum structural controllability problem, or declares that no such set exists, in $O(n + m \sqrt{n})$ operations. This will be done under an assumption which we now describe and which carries no loss of generality.
 {\em We will make the assumption in the remainder of the paper that no node in $G(A)$ is isolated (a node is isolated if it has no incoming or outgoing edges, which means the corresponding row and column of $A$ is identically zero).} This can indeed be done without loss of generality since all variables corresponding to isolated nodes clearly need to be affected by inputs, and since all isolated nodes can be enumerated straightforwardly in $O(n+m)$ operations.

We now describe the structure of the remainder of the paper. In the following Section \ref{sec:combinatorial} we describe a combinatorial reformulation
of the minimal structural controllability problem. It is this combinatorial reformulation which we then proceed to solve in the following Section \ref{sec:main}. 
Note that our final result, namely a running time of $O(n + m \sqrt{n})$ operations, is a combination of a series of reductions made throughout the paper. More precisely, 
this running time follows from putting together Proposition \ref{reformulation} on the combinatorial reformulation of the problem, Proposition \ref{prop:allexists} and the discussion at the end of Section \ref{sec:hk} on the running time of the Hopcroft-Karp algorithm, and Proposition \ref{prop:opcount} \& Theorem \ref{thm} which bound the running time of our main algorithm in Section \ref{sec:main}.

\section{A combinatorial reformulation\label{sec:combinatorial}} 

Since the values of the nonzero entries of the matrices $A,B$ do not appear in the definition of structural controllability, it is usually convenient to restate questions about structural controllability in terms of graphs corresponding to these matrices. Here we describe such a combinatorial reformulation of the 
minimum structural controllability problem which will be the basis for the remainder of this paper.  We do not claim any novelty for this reformulation as it is a trivial modification of Theorem 8 from \cite{pequito-main}   and Theorem 10 from \cite{dion-main}. 

\bigskip

 We define a {\sl partitioned directed graph} to be an ordinary directed graph $G=(V,E)$ equipped with a partition of the 
set of vertices $V = \Vu \cup \Vs, ~~ \Vu \cap \Vs = \emptyset$ such that the edge set $E$ contains no edges whose destination is in $\Vu$. Given the linear system of Eq. (\ref{withinput}) where  $A$ has dimensions $n \times n$ while $B$ has dimensions $n \times k$, we will associate a partitioned directed graph by setting $$\Vs = \{1, \ldots, n\}, ~~\Vu = \{1', \ldots, k'\}$$ and defining the edge set $E$ to consist of all the edges $(i,j)$ with $A_{ji} \neq 0$ and $(i',j)$ with $B_{ji'} \neq 0$. Some examples are drawn in Figures \ref{ex1} and \ref{ex2}. 

\newsavebox{\smlmat}% Box to store smallmatrix content
\savebox{\smlmat}{$\dot{x} = \left(\begin{smallmatrix}0 &1 \\0 &1\end{smallmatrix}\right)x+\left( \begin{smallmatrix}1\\0\end{smallmatrix} \right) u$}

\newsavebox{\aaa}% Box to store smallmatrix content
\savebox{\aaa}{$\dot{x} = \left(\begin{smallmatrix}0 &0 \\0 &1\end{smallmatrix}\right)x+\left( \begin{smallmatrix}2\\3\end{smallmatrix} \right) u$}

\newsavebox{\bb}% Box to store smallmatrix content
\savebox{\bb}{$\dot{x} =  \left( \begin{array}{cc} 1 & 2  \\ 1 & 2 \\ 1 & 2\end{array} \right) u$}

\begin{figure} \begin{center}

\begin{tikzpicture}[->, thick]
\SetVertexNormal[LineColor=black]
%\SetVertexNoLabel
\SetVertexMath

\node (A1) at (-10,0) [circle, draw] {$1$};
\node (A2) at (-7.5,0) [circle, draw] {$1'$};

\node (B1) at (-4,0) [circle, draw] {$2$};
\node (B2) at (-1.5,0) [circle, draw] {$1$};
\node (B3) at (1,0) [circle, draw] {$1'$};

\path[->, color=red]

(B3) edge (B2)
(B2) edge (B1)
(B1) edge [out = 115, in = 55, looseness = 4] (B1);

\path[->,color=red]

(A2) edge (A1);

\end{tikzpicture} 

\caption{On the left is a drawing of the partitioned graph corresponding to the scalar equation $\dot{x} =  u$; on the right is a drawing of the partitioned graph corresponding to  \usebox{\smlmat} .}
\label{ex1}
\end{center} \end{figure}
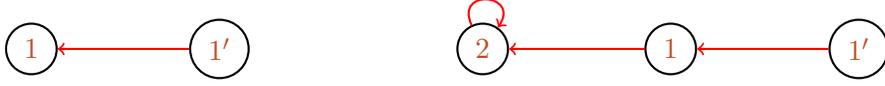

\begin{figure} \begin{center}

\begin{tikzpicture}[->, thick]
\SetVertexNormal[LineColor=black]
%\SetVertexNoLabel
\SetVertexMath

\node (A1) at (-10,0) [circle, draw] {$1$};
\node (A2) at (-7.5,0) [circle, draw] {$1'$};
\node (A3) at (-10,2.5) [circle, draw] {$2$};

\node (U1) at (1,0) [circle, draw] {$1'$};
\node (U2) at (1,2.5) [circle, draw] {$2'$};

\node (V1) at (-4,0) [circle, draw] {$1$};
\node (V2) at (-4,2) [circle, draw] {$2$};
\node (V3) at (-4,4) [circle, draw] {$3$};

\path[->, color=red]

(U1) edge (V1)
(U1) edge (V2)
(U1) edge (V3)
(U2) edge (V1)
(U2) edge (V2)
(U2) edge (V3);

\path[->,color=red]

(A2) edge (A1)
(A2) edge (A3)
(A3) edge [out=115, in = 55, looseness=4] (A3);

\end{tikzpicture} 

\caption{On the left is a drawing of the partitioned graph corresponding to the equation \usebox{\aaa} on the right-hand side is a drawing of the partitioned graph corresponding to the equation  \usebox{\bb} .}
\label{ex2}
\end{center} \end{figure}

%We next state combinatorial necessary and sufficient condition for structural controllability in terms of the underlying graph which was first obtained in  %\cite{lin,sp}. First, we require a few definitions.
 For any directed graph (ordinary or partitioned), we will use the following notation: given a subset of the vertices $S$, we will use $N_{-}(S)$ to refer to 
the set of in-neighbors of $S$ %, i.e., the set of vertices $v$ such that $(v,s)$ is an edge for some $s \in S$. Similarly,
and $N_+(S)$ will refer to the set of out-neighbors of $S$. A subset of the vertices $S$ is called contracting
if $|N_{-}(S)|<|S|$. 

\begin{theorem}[\cite{lin,sp}] \label{sp-theorem} The linear system of Eq. (\ref{withinput}) is structurally controllable if and only if the associated partitioned 
graph $G= ( \Vu \cup \Vs, E)$ satisfies the following two conditions:
\begin{enumerate} \item Any node in $\Vs$ is reachable by a path starting from some node in $\Vu$. 
\item No subset of $\Vs$ is contracting. 
\end{enumerate}
\end{theorem}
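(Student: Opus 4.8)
The plan is to combine the definition of structural controllability --- which asks exactly for the existence of a \emph{single} pair $(A',B')$ with $Z(A)\subset Z(A')$ and $Z(B)\subset Z(B')$ making Eq.~(\ref{perturb}) controllable --- with the Popov--Belevitch--Hautus (PBH) rank test: $(A',B')$ is controllable iff $\operatorname{rank}[\,sI-A'\ \ B'\,]=n$ for every $s\in\mathbb C$. The theorem then splits into two implications. For necessity it suffices to show that if either graph condition fails then \emph{every} admissible $(A',B')$ is uncontrollable; for sufficiency it suffices to exhibit \emph{one} controllable admissible pair when both conditions hold.

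For necessity, suppose first that condition~1 fails and let $T\subseteq\Vs$ be the (nonempty) set of state nodes not reachable from $\Vu$. By definition of $T$, no edge of $G$ enters $T$ from outside $T$, so with the state variables reordered to put the indices of $T$ last, every admissible pair is block triangular with the $T$-block obeying $\dot x_T=A'_{TT}x_T$ (no input, and no coupling from the other states, reaches $T$); hence the reachable subspace lies in the complementary coordinates and no admissible pair is controllable. Suppose instead that condition~2 fails and pick $S\subseteq\Vs$ with $|N_{-}(S)|<|S|$. Since $E(A)=\{(i,j)\mid A_{ji}\neq 0\}$, the rows of $[A'\ \ B']$ indexed by $S$ have all their support in the columns indexed by $N_{-}(S)$, so $\operatorname{rank}[A'\ \ B']\le|N_{-}(S)|+(n-|S|)<n$ for every admissible pair; taking $s=0$ in the PBH test shows uncontrollability again.

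For sufficiency, assume both conditions hold. The first observation is that condition~2 is precisely Hall's condition for the bipartite graph $H$ whose left class is $\Vs$ (the rows of $[A\ \ B]$), whose right class is $\Vs\cup\Vu$ (the columns), and in which $j$ is adjacent to $c$ iff $c\in N_{-}(\{j\})$; hence $H$ has a matching saturating $\Vs$. Read inside $G$, such a matching is a vertex-disjoint family of simple paths, each emanating from a node of $\Vu$, together with simple cycles lying in $\Vs$, which jointly cover $\Vs$. Next I would use condition~1 to graft, one cycle at a time, each covering cycle onto the rest of the configuration by an additional vertex-disjoint path from $\Vu$, producing a spanning ``cactus'' subgraph of $G$ rooted at $\Vu$ (stems with buds attached). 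Finally I would verify Lin's cactus lemma: a pair whose adjacency graph contains a spanning cactus is controllable for a suitable assignment of the free entries --- set every entry not lying on the cactus to $0$; a bare stem then gives a triangular controllability matrix with nonzero diagonal, and each grafted bud supplies exactly the missing independent columns once its cycle weight is chosen with a distinct product, via a Vandermonde-type argument on the cycle ``periods.'' This exhibits a controllable admissible pair and finishes the proof.

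The entire difficulty is in the sufficiency direction, and there in two places: (i) the purely combinatorial step upgrading ``a matching saturating $\Vs$'' plus accessibility to an honest \emph{spanning} cactus, where keeping the grafting paths vertex-disjoint requires an induction on the number of not-yet-attached cycles together with some path surgery; and (ii) the explicit rank computation certifying controllability of a spanning-cactus system. Since Theorem~\ref{sp-theorem} is the classical Lin--Shields--Pearson result, in the paper one would simply cite \cite{lin,sp}; the genuinely new content is the combinatorial reformulation built on top of it in the next section.
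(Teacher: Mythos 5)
The paper does not prove Theorem~\ref{sp-theorem}; it states it as the classical Lin--Shields--Pearson result and cites \cite{lin,sp} without supplying an argument, a fact you yourself observe at the end of your sketch. Your reconstruction is a sound outline of that classical proof. The necessity half is essentially complete and correct: the block-triangularity argument for a nonempty inaccessible set $T$, and the PBH-at-$s=0$ rank bound for a contracting $S$ (the rows of $[A'\ \ B']$ indexed by $S$ are supported on the columns $N_{-}(S)$, so $\operatorname{rank}[A'\ \ B']\le |N_{-}(S)|+n-|S|<n$), both depend only on the zero pattern and therefore rule out \emph{every} admissible pair, which is exactly what the definition of structural controllability requires. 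The sufficiency half is only sketched, and you correctly locate the two genuine gaps: (i) promoting the Hall-matching decomposition (paths from $\Vu$ plus cycles in $\Vs$) together with accessibility into a vertex-disjoint spanning cactus is where the real combinatorial work of Lin's and Shields--Pearson's proofs lives and still has to be carried out; and (ii) the explicit rank certification for a spanning-cactus system needs care --- the ``Vandermonde on cycle periods'' heuristic is a reasonable pointer but is not quite how the classical proofs establish generic full rank, which typically proceeds by exhibiting a nonzero term in an appropriate determinant expansion. Since the paper treats the theorem as known background, your level of detail is appropriate, and the route you describe is the standard one rather than an alternative to anything in the paper.
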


In a slight abuse of notation, we will now say that a partitioned graph is structurally controllable if it satisfies both of these conditions. 
For example, both graphs in Figure \ref{ex1} are structurally controllable, as is the graph on the left in Figure \ref{ex2}; however, the 
graph on the right in Figure \ref{ex2} is not structurally controllable. 

We define a matching
$M$ in a directed graph to be a subset of the edges such that no two edges in $M$ have a common source or a common destination. We will say that
a vertex $v$ is unmatched with respect to a matching $M$ if there is no edge in $M$ which has $v$ as its destination. We will use $U(M)$ will denote the set of unmatched nodes in the matching $M$. A matching $M$ in a directed graph is said to be perfect $U(M) = \emptyset$, i.e., if no node is unmatched.
We extend this definition to partitioned graphs as follows. Note that in a partitioned graph $G = (\Vu \cup \Vs, E)$, no node of $\Vu$ has any incoming edges;  consequently, we will say that a matching $M$ in a 
partitioned graph $G$ is perfect if $U(M) = \Vu$,  i.e., if no node in $\Vs$ is unmatched. 

With these definitions in place, it was observed in \cite{slotine} observed that condition (2) of Theorem \ref{sp-theorem} may be restated more conveniently in 
terms of matchings.

\begin{theorem}[\cite{slotine}] \label{slotine-theorem} Condition (2) of Theorem \ref{sp-theorem} holds if and only if there exists a 
perfect matching in the partitioned graph $G= ( \Vu \cup \Vs, E)$.
\end{theorem}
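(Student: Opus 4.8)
The plan is to recognize condition (2) as Hall's marriage condition after passing to a bipartite graph. Given the partitioned graph $G = (\Vu \cup \Vs, E)$, I would build an undirected bipartite graph $H$ with parts $X$ and $Y$: the part $X$ contains a vertex $x_v$ for every $v \in \Vu \cup \Vs$ (the ``potential sources''), the part $Y$ contains a vertex $y_w$ for every $w \in \Vs$ (the ``potential destinations''), and $H$ has the edge $\{x_v, y_w\}$ precisely when $(v,w) \in E$. Note that $Y$ is indexed only by $\Vs$, which is consistent because no edge of $E$ has its destination in $\Vu$; this is the same directed-to-bipartite translation that underlies the connection between structural controllability and bipartite matching mentioned earlier in the paper.

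The first step is to check that matchings of the directed graph $G$ correspond bijectively to matchings of $H$: a set $M \subseteq E$ has no two edges sharing a common source and no two sharing a common destination if and only if $\{\{x_v,y_w\} : (v,w)\in M\}$ is a matching in $H$. Under this correspondence a vertex $w \in \Vs$ has an incoming edge in $M$ exactly when $y_w$ is saturated by the matching in $H$; hence $G$ has a matching $M$ with $U(M) = \Vu$ (i.e.\ a perfect matching in the partitioned sense) if and only if $H$ admits a matching saturating every vertex of $Y$.

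The second step is to invoke Hall's marriage theorem with $Y$ playing the role of the side to be saturated: $H$ has a matching saturating $Y$ if and only if $|N_H(T)| \ge |T|$ for every $T \subseteq Y$. Finally I would identify this with condition (2): writing $T = \{y_w : w \in S\}$ for a subset $S \subseteq \Vs$, the neighborhood of $T$ in $H$ is exactly $\{x_v : v \in N_{-}(S)\}$, so $|N_H(T)| = |N_{-}(S)|$, and Hall's condition becomes $|N_{-}(S)| \ge |S|$ for all $S \subseteq \Vs$, which says precisely that no subset of $\Vs$ is contracting.

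I do not expect a genuine obstacle, since the statement is Hall's theorem in disguise; the only point demanding care is the bookkeeping of the bipartite construction. A vertex of $\Vs$ must be permitted to appear both as a source in $X$ and as a destination in $Y$, and a self-loop $(v,v)\in E$ must be allowed and becomes the edge $\{x_v,y_v\}$ in $H$; keeping the ``source'' and ``destination'' copies of each vertex formally distinct is exactly what makes the matching bijection and the identification of ``contracting'' with a Hall deficiency go through without ambiguity.
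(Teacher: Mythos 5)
Your proof is correct. The paper does not prove this statement itself---it cites it from \cite{slotine}---so there is no in-paper argument to compare against, but the Hall's-theorem derivation you give is precisely the standard one and is certainly the argument the citation points to. The only two bookkeeping points that need care are the ones you flagged: (i) the bijection between directed matchings in $G$ (no shared source, no shared destination) and matchings in the bipartite split $H$, and (ii) keeping the source copy $x_v$ and destination copy $y_v$ of a vertex $v \in \Vs$ formally distinct so that self-loops and vertices playing both roles cause no collision. Both are handled correctly; the identification $N_H(\{y_w : w \in S\}) = \{x_v : v \in N_-(S)\}$ then turns Hall's deficiency condition into the contracting condition verbatim, which is exactly what is needed.
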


\bigskip

As a consequence of Theorem \ref{sp-theorem} and Theorem \ref{slotine-theorem}, we can reformulate the minimum structural controllability problem
in combinatorial terms. % This is Proposition \ref{reformulation} below, which requires a few more definitions. 
This will require several more definitions. 

Given a matrix $A \in \R^{n \times n}$, we can define 
the graph $G(A)$ to be the directed graph with the vertex  
set $\{1, \ldots,n\}$  and edge set $E(A) = \{(i,j) ~|~ A_{ji} \neq 0\}$. We will refer to $G(A)$ as the adjacency graph of $A$. 
Given a directed graph $G$ and the set of forbidden vertices $F$, a matching $M$ in $G$ is called an allowed matching if $U(M) \cap F = \emptyset$, i.e., if no node in $F$ is unmatched. F
We define the condensation of any
graph to be the directed acyclic graph obtained by collapsing together the strongly connected components; the condensation of $G(A)$ will be denoted by
$G_{\rm cond}(A)$.  In any directed graph, we will say that a vertex is a source vertex if it has no incoming edges. We will say that a connected component of $G(A)$ is a source connected component if it collapses to a source vertex in $G_{\rm cond}(A)$. Finally, given a matching $M$ in a directed graph $G$, we define the cost of the matching to be the number of unmatched vertices plus the number of source strongly connected components without an unmatched node.

We can now state the combinatorial reformulation of the minimum structural controllability problem. We mention once again that we claim no novelty for this reformulation as it is an immediate  consequence of  Theorem  8 from \cite{pequito-main} as well as Theorem 10 from \cite{dion-main}.

\smallskip

\begin{proposition}  \label{reformulation} The minimum structural controllability problem is solvable if and only if \begin{enumerate} \item  $G(A)$ has an allowed matching. \item Every source connected component of $G(A)$ has a node in $F^c$.  \end{enumerate}
Furthermore, the optimal set $I$ asked by the minimum structural controllability problem can be recovered in $O(m)$ operations from the minimum cost allowed matching in $G(A)$. 
\end{proposition}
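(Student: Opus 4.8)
The plan is to prove Proposition~\ref{reformulation} by passing through the partitioned-graph characterization of Theorems~\ref{sp-theorem} and~\ref{slotine-theorem}, and then translating statements about the partitioned graph back into statements about $G(A)$, its matchings, and its source connected components. Fix $A$ and $F$. For a candidate input set $I$ with $I \cap F = \emptyset$, form the partitioned graph $G_I = (\Vu \cup \Vs, E)$ associated to $\dot x = Ax + B(I)u$: here $\Vs = \{1,\dots,n\}$, there is one vertex $i'$ in $\Vu$ for each $i \in I$, the edges among $\Vs$ are exactly the edges of $G(A)$, and there is an edge $(i', i)$ for each $i \in I$. By Theorems~\ref{sp-theorem} and~\ref{slotine-theorem}, the minimum structural controllability problem is solvable iff there exists such an $I$ (disjoint from $F$) for which (a) every node of $\Vs$ is reachable from $\Vu$ in $G_I$, and (b) $G_I$ has a perfect matching (i.e.\ one leaving only $\Vu$ unmatched). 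The key observation linking this to $G(A)$ is that a perfect matching of $G_I$ restricts to a matching $M$ of $G(A)$ whose unmatched set is exactly $\{i \in I : i' \text{ is not matched into } \Vs\} \subseteq I$; conversely any matching $M$ of $G(A)$ can be completed to a perfect matching of $G_I$ provided $I \supseteq U(M)$. Hence condition (b) is achievable for \emph{some} $I$ disjoint from $F$ iff $G(A)$ has a matching $M$ with $U(M) \cap F = \emptyset$, i.e.\ an allowed matching. This gives item~1 of the proposition.

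For item~2, I would analyze reachability. A node $v$ of $\Vs$ is reachable from $\Vu$ in $G_I$ iff, in $G(A)$, $v$ is reachable from some node of $I$ (using that the only edges into $\Vs$ from $\Vu$ land on $I$). Passing to the condensation $G_{\rm cond}(A)$, every node is reachable from some source SCC, so $\Vs$ is fully reachable iff $I$ contains at least one vertex in each source SCC of $G(A)$. Since $I$ must avoid $F$, this is possible iff every source connected component has a vertex in $F^c$ — and note that source connected components and source SCCs coincide at the "top level" in the sense needed here, which is the small bookkeeping point to get right. Combining: if both items~1 and~2 hold, pick an allowed matching $M$, and let $I = U(M) \cup S$ where $S$ picks one $F^c$-vertex from each source SCC not already containing an element of $U(M)$; then $I \cap F = \emptyset$, conditions (a) and (b) both hold for $G_I$, so the problem is solvable. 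Conversely, solvability forces item~1 (from (b)) and item~2 (from (a)), as argued.

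For the "furthermore" part, I would observe that the construction above shows the \emph{size} of a valid $I$ equals $|U(M)| + |S|$, and $|S|$ is minimized (for fixed $M$) by taking exactly the source SCCs with no unmatched node — which is precisely the definition of $\operatorname{cost}(M)$ given in the text. So over all allowed matchings $M$, $\min_M \operatorname{cost}(M)$ equals the optimum of the original problem, and from any minimum-cost allowed matching $M$ one recovers an optimal $I$ by the same recipe: $I = U(M)$ together with one $F^c$-representative from each source SCC containing no unmatched vertex. The running time claim then follows because, given $M$, one can compute the SCCs and condensation of $G(A)$ by Tarjan's algorithm in $O(n+m)$, identify the source SCCs, flag those containing an unmatched vertex in one pass over $U(M)$, and then for each unflagged source SCC scan its vertices for one lying in $F^c$ (which must exist, by item~2) — all in $O(n+m) = O(m)$ since no node is isolated.

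The main obstacle I anticipate is \emph{not} the structural-controllability bookkeeping but the subtle interplay in item~2 between "source connected component" (a notion about the underlying graph structure) and "source SCC of the condensation," together with making sure the completion-of-matchings argument in item~1 is tight in both directions — in particular that requiring $I \supseteq U(M)$ is genuinely necessary (an unmatched node of $G(A)$ with no input attached would remain unmatched in $G_I$, violating perfectness) and sufficient (matching each such node to its own input vertex $i'$). Getting the precise statement "cost of $M$ = smallest valid $I$ built from $M$" pinned down, so that minimizing cost over allowed matchings is exactly the original optimization, is the heart of the argument; the rest is standard graph-algorithmic accounting.
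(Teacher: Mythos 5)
Your plan is essentially the paper's: reduce via Theorems~\ref{sp-theorem} and~\ref{slotine-theorem} to a matching/reachability statement, then show that the cost of an allowed matching coincides with the size of the input set it produces. The ``source connected component vs.\ source SCC'' distinction you flag is a non-issue; the paper explicitly defines a source connected component to be a strongly connected component that collapses to a source vertex of the condensation, so the two terms are synonymous.

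There is, however, a genuine gap in the ``furthermore'' step. Writing $I^*$ for the size of an optimal input set and $c^*$ for the minimum cost of an allowed matching, your construction (take a minimum-cost allowed matching $M$, set $I = U(M)\cup S$) produces a valid input set of size $\mathrm{cost}(M)$, which establishes $I^* \le c^*$. You assert but never prove $c^* \le I^*$, which is exactly what makes the recovered $I$ optimal rather than merely feasible. The missing argument runs as follows: start from an \emph{arbitrary} valid $I$, take the perfect matching of $G_I$ guaranteed by Theorems~\ref{sp-theorem} and~\ref{slotine-theorem}, and restrict it to $\Vs$ to obtain an allowed matching $M$ with $U(M)\subseteq I$. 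By reachability every source SCC must contain a vertex of $I$, and each source SCC \emph{without} an unmatched vertex is disjoint from $U(M)$, so each such SCC contributes a distinct element of $I\setminus U(M)$; hence $|I|\ge |U(M)| + \#\{\text{source SCCs without an unmatched vertex}\} = \mathrm{cost}(M) \ge c^*$. The paper devotes a full paragraph to precisely this inequality; without it, you have not actually pinned down what you yourself call ``the heart of the argument.'' (A small slip elsewhere: the unmatched set of the restricted matching is $\{i\in I : (i',i)\text{ \emph{is} in the perfect matching}\}$, i.e.\ those $i$ for which $i'$ \emph{is} matched into $\Vs$, not those for which it is unmatched.)
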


\begin{proof} We first observe that due to item (1) of Theorem \ref{sp-theorem},  if some source strongly connected component of 
$G(A)$ does not have a node in $F^c$, then the minimum structural controllability problem is not solvable. Since a listing of the source strongly connected components 
of a directed graph may be obtained in $O(m)$ time using, for example, Kosaraju's algorithm \cite{aho}\footnote{The running time of Kosaraju's algorithm is frequently cited as $O(m+n)$ operations, but we have assumed that no node in $G(A)$ is isolated so that $n \leq 2m$.} and further checking that any source connected component has  an entry in $F^c$ can be done in $O(n)$ additional time, the conclusion that ``the minimum structural controllability problem is not solvable'' can always be read off in $O(m)$ operations in this case. 

We therefore only need to consider the case when each source strongly connected component of $A$ has a  vertex in $F^c$, {\em which we do for the remainder of this proof}. Let $I^*$ be the smallest possible cardinality of the sought-after set $I$ in the minimum structural controllability problem and let $c^*$ be
the smallest cost of any allowed matching in $G(A)$. If the minimum structural controllability problem is not solvable, we say $I^* = +\infty$; and if no allowed matching exists, we will say $c^* = +\infty$. We next argue that $I^* = c^*$. 

We first  argue that $c^* \leq I^*$. Indeed, let $I$ be a set of indices such that $I \cap F = \emptyset$ and such that Eq. (\ref{withinput}) is structurally controllable with some diagonal matrix $B(I)$ satisfying $B_{ii}(I) \neq 0$ if and only if $i \in I$. Consider the associated partitioned directed graph.
% By item (1) of Theorem \ref{sp-theorem} we have that, for each source strongly connected component in $G(A)$, at least one $i \in I$ belongs to its set of vertices. 
By item (2) of Theorem \ref{sp-theorem} and Theorem \ref{slotine-theorem} we have that there is a perfect matching in this partitioned graph. By considering the edges of this matching which have both source and destination in $\Vs$, we obtain a matching in $G(A)$. Call this matching $M$. 

Since after we add to $M$ some edges going from $\Vu$ to $\Vs$ we get a perfect matching, it follows that every unmatched vertex in $M$ has an incoming edge from $\Vu$, and consequently belongs to $F^c$. Thus $M$ is an allowed matching. Moreover, by item (1) of Theorem \ref{sp-theorem}, we have that every source connected component of $G(A)$ without an unmatched node in $M$ has an incoming link from $\Vu$. Since $B(I)$ is a diagonal matrix,  we have that the cardinality of $I$ exactly equals the number of nodes in $\Vs$ with an incoming link from $\Vu$. It follows that the cost of $M$ is at most the cardinality of $I$. This proves that $c^* \leq I^*$. 

We next argue that $I^* \leq c^*$. Indeed, suppose $M$ is an allowed matching in $G(A)$. We can construct  a set $I_M$ by first putting into it every variable corresponding to a vertex in $U(M)$ and then adding an arbitrary node in $F^c$ from each strongly connected component without an unmatched node. Since $M$ is an allowed matching, we have $I_M \cap F = \emptyset$. Now take any diagonal matrix $B(I_M)$. In the associated partitioned directed graph, we have that item (1) of Theorem \ref{sp-theorem} is satisfied since we have ensured there is an outgoing link from $\Vu$ to each source strongly connected component of $G(A)$. We finally argue that item (2) of Theorem \ref{sp-theorem} is satisfied as well. Indeed, by construction, each node in $U(M)$ has an incoming ede from $\Vu$; therefore we can construct a perfect matching in the partitioned graph by taking together the edges in $M$ with all the edges from $\Vu$ incoming on nodes in $U(M)$. Appealing to Theorem \ref{slotine-theorem}, we conclude that item (2) of Theorem \ref{sp-theorem} holds, and consequently the set $I_M$ makes Eq. (\ref{withinput}) structurally controllable. Observing that the cardinality of $I_M$ is exactly the cost of $M$, 
we conclude the proof that $I^* \leq c^*$. 

We have thus shown that $c^* = I^*$. In particular, under the condition that every source connected component of $G(A)$ has a node in $F^c$, we have
that $c^*$ is finite if and only if $I^*$ is finite. This proves the first equivalence of this proposition. Finally, to argue that the optimal set $I^*$ can be read off the minimum cost allowed matching $M^*$ in $O(m)$ operations, observe that the previous paragraph has described exactly how to construct  $I^*$ from the minimum cost matching $M^*$: all we need to do is  list all the unmatched 
vertices in $M^*$ as well as all the vertices lying in the intersections between each source strongly connected components without an unmatched node in $M^*$ and $F^c$. This can  be straightforwardly done in $O(m)$ operations once a listing
of the strongly connected component is available, which as we already remarked takes $O(m)$ operations to compute using Kosaraju's algorithm. Thus in the end it takes $O(m)$ operations
to find the optimal set $I^*$ once the minimum cost allowed matching has been found. 
\end{proof}

\section{Finding a minimum cost allowed matching\label{sec:main}}

Having proved Proposition \ref{reformulation}, we need only concern ourselves with a purely combinatorial question: given a directed graph $G$ and a set of forbidden vertices $F$, how do we find an allowed matching of minimum cost (or declare that no allowed matching exists)? In this section, we describe how to solve this problem in $O(m \sqrt{n})$ operations. Coupled with Proposition \ref{reformulation}, this immediately implies our main result, namely that the minimum structural controllability problem is solvable in $O(n+m \sqrt{n})$ operations (note that the additional factor of $n$ comes from making sure the graph $G(A)$ has no isolated nodes; we assumed this in the previous section, but ensuring this requires $O(m+n)$ operations as we previously noted). 

\bigskip

\medskip

Our algorithm has two parts. First, we will describe how to find an allowed matching (or declare none exists) in $O(m \sqrt{n})$ operations.
This is done in Section \ref{sec:hk} and is basically an immediate application of the well-known Hopcroft-Karp algorithm for maximum bipartite matchings. Next, we describe 
an augmentation process which, starting from an allowed matching, produces a minimum cost allowed matching in $O(m\sqrt{n})$ additional operations. 
This is described in Section \ref{sec:augment}. Putting together the results of these two sections immediately gives that the minimum cost allowed matching problem can be solved
in $O(m \sqrt{n})$ operations.

\subsection{Finding an allowed matching, if it exists\label{sec:hk}}

In this section we address the question of finding an allowed matching in a graph. It could very well be that, for some directed graph $G$ and set of forbidden vertices $F$, no allowed matching exists; for example, if two nodes in $F$ have have in-degree one with the same in-neighbor, at least one of them is bound to be unmatched.

Here we reformulate this problem as a a bipartite matching problem which can then be solved using the Hopcroft-Karp algorithm in $O(m \sqrt{n})$ operations. Along the way, we introduce some definitions which will be useful to us.

\bigskip

\medskip

Given a directed graph $G$, the splitting of $G$ is defined to be the directed bipartite graph obtained as follows: for every node $u$, we create two nodes $u_{\rm src}$ and $u_{\rm dst}$, and for every edge $(u,v)$ in the original graph we put the edge $(u_{\rm src}, v_{\rm dst})$. We will refer to all the nodes $u_{\rm src}$ as ``source nodes'' and to the nodes $v_{\rm dst}$ as ``destination nodes.'' We will say the edge $(u,v)$ in $G$ and the edge $(u_{\rm src}, v_{\rm dst})$  ``correspond to each other.'' See Figure \ref{fig-split} for an example of a graph and its splitting.

\begin{figure}
\begin{tikzpicture}[->, thick]
\SetVertexNormal[LineColor=black]
%\SetVertexNoLabel
\SetVertexMath

\node (d) at (-12.5,0) [circle, draw] {$d$};
\node (c) at (-10,0) [circle, draw] {$c$};
\node (b) at (-7.5,0) [circle, draw] {$b$};
\node (a) at (-5,0) [circle, draw] {$a$};

\node (dd) at (3,-2) [circle, draw] {$d_{\rm dst}$};
\node (cd) at (3,0) [circle, draw] {$c_{\rm dst}$};
\node (bd) at (3,2) [circle, draw] {$b_{\rm dst}$};
\node (ad) at (3,4) [circle, draw] {$a_{\rm dst}$};

\node (ds) at (-2,-2) [circle, draw] {$d_{\rm src}$};
\node (cs) at (-2,0) [circle, draw] {$c_{\rm src}$};
\node (bs) at (-2,2) [circle, draw] {$b_{\rm src}$};
\node (as) at (-2,4) [circle, draw] {$a_{\rm src}$};

\path[->, color=red]

(a) edge [bend left] (b)
(b) edge (c)
(c) edge (d)
(a) edge [out=115, in = 55, looseness=4] (a)
(b) edge [bend left] (a)
(b) edge [out=115, in = 55, looseness=4] (b)
(as) edge (ad)
(as) edge (bd)
(bs) edge (bd)
(bs) edge (ad)
(bs) edge (cd)
(cs) edge (dd);

\end{tikzpicture} \caption{The graph on the right is the splitting of the graph on the left.}\label{fig-split}
\end{figure}
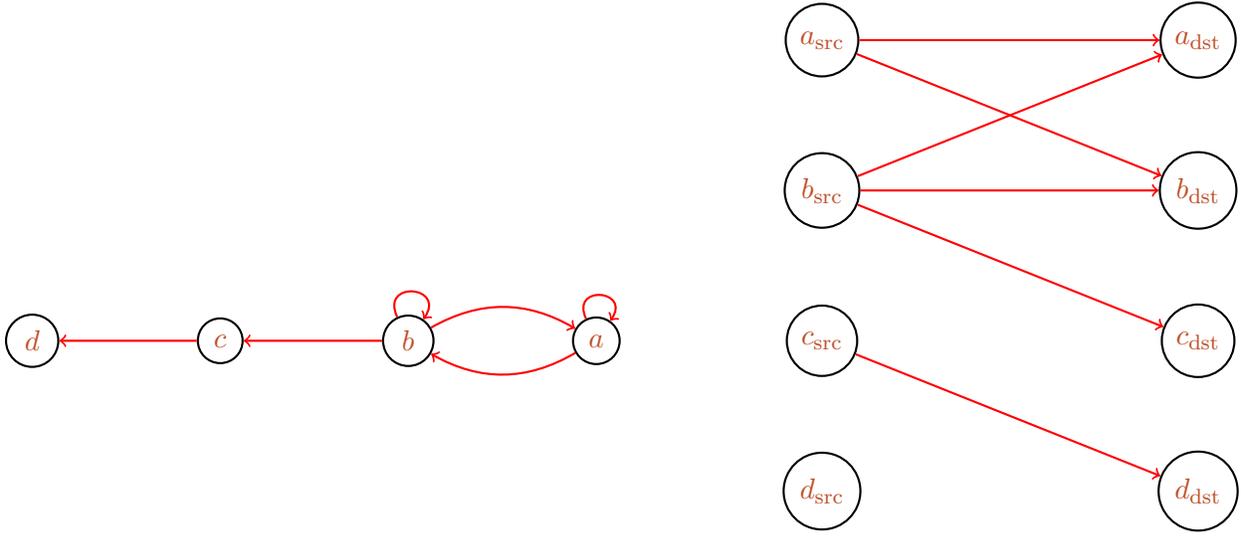

Given a directed graph $G=(V,E)$ and a set $V' \subset V$,we will say that the {\em subgraph determined by $V'$} is
the graph with vertex set $V' \cup N_{\rm in}(V') \cup N_{\rm out}(V')$ and edge set $(a,b) \in E$ such that at least one of $a,b$ belongs to $V'$. 
%An undirected bipartite graph $G = (V,E)$ with given bipartition $V = V_1 \cup V_2$ is said to have a perfect matching if it has a matching
%with $\min(|V_1|, |V_2|)$ edges. 

We then have the following fact.

\begin{proposition} Let $G_F$ be the graph obtained by taking the subgraph of the splitting of $G$ determined by the destination vertices of $F$ and  viewing it as an undirected graph by ignoring the orientations of the edges. Then an allowed matching in $G$ exists if and only if the maximum cardinality matching 
in $G_F$ is of size $|F|$.  \label{prop:allexists}

Moreover, an allowed matching can be recovered in $O(n)$ operations from a matching in $G_F$ of size $|F|$.
%\begin{enumerate} \item In $G$ we have that $|N_{\rm out}(F)| \geq |F|$. 
%\item $G_F$ (with the bipartition of $V_1$:source nodes, $V_2$:destination nodes)  has a perfect matching.
%\end{enumerate}
\end{proposition}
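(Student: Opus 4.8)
The plan is to exploit the evident bijection between matchings of $G$ and matchings of its splitting, and to observe that $G_F$ singles out exactly the portion of the splitting that matters for $F$. First I would record the elementary fact that a set $S$ of edges of $G$ is a matching in the directed sense (no two edges sharing a source or a destination) if and only if the corresponding set $\{(u_{\rm src},v_{\rm dst}) : (u,v)\in S\}$ is a matching in the undirected bipartite splitting of $G$: the correspondence $(u,v)\leftrightarrow(u_{\rm src},v_{\rm dst})$ is a bijection on edge sets, and since source and destination nodes are disjoint, two edges of the splitting share an endpoint iff they share a $u_{\rm src}$ or a $v_{\rm dst}$, i.e.\ iff the underlying directed edges share a source or a destination. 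Under this correspondence, $v$ is unmatched with respect to a matching $M$ of $G$ precisely when $v_{\rm dst}$ is unmatched in the splitting.

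Next I would unwind what $G_F$ is. Writing $V'=\{v_{\rm dst}: v\in F\}$, every node of $V'$ is a destination node of the splitting and hence has no outgoing edges, so $N_{\rm out}(V')=\emptyset$; thus the subgraph determined by $V'$ has vertex set $V'\cup N_{\rm in}(V')$, and its edge set is precisely $\{(u_{\rm src},v_{\rm dst}) : (u,v)\in E,\ v\in F\}$. In particular $G_F$, read as an undirected graph, is bipartite with one side equal to $V'$, which has exactly $|F|$ vertices; hence every matching of $G_F$ has size at most $|F|$, and a matching of size $|F|$ exists if and only if some matching of $G_F$ saturates all of $V'$.

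With these two observations the equivalence is short in both directions. If the maximum matching of $G_F$ has size $|F|$, take such a matching $N$; it saturates $V'$, and as a set of edges of the splitting it corresponds, by the first paragraph, to a matching $M$ of $G$ in which every $v\in F$ is matched, i.e.\ an allowed matching. Conversely, if $M$ is an allowed matching of $G$, then each $v\in F$ has exactly one incoming edge of $M$ (at most one since $M$ is a matching, at least one since $M$ is allowed), so $\{(u_{\rm src},v_{\rm dst}) : (u,v)\in M,\ v\in F\}$ is a matching of $G_F$ of size exactly $|F|$, forcing the maximum to be $|F|$. For the algorithmic claim, the forward construction is explicit: given a matching $N$ of $G_F$ with $|N|=|F|$, output the directed edge set $\{(u,v) : (u_{\rm src},v_{\rm dst})\in N\}$, which is an allowed matching of $G$ by the argument just given; it is produced by a single pass over the $|F|\leq n$ edges of $N$, mapping each back through the correspondence in $O(1)$ time, for $O(n)$ total.

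I do not expect a genuine obstacle; the only point requiring care is the bookkeeping around ``the subgraph determined by $V'$'' — checking that $N_{\rm out}(V')=\emptyset$, that no edge of $G_F$ has a destination outside $F$, and that the $F$-side of $G_F$ has size exactly $|F|$ — because the whole equivalence rests on the bound ``maximum matching $\leq|F|$'' being tight exactly when $V'$ is saturated.
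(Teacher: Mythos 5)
Your proof is correct and follows essentially the same route as the paper's: pass through the bijection between matchings of $G$ and matchings of its splitting, observe that $G_F$ is bipartite with the $F$-side having exactly $|F|$ vertices, and translate ``allowed'' to ``saturates the destination side of $G_F$'' in both directions, with recovery by a single $O(n)$ pass. If anything your write-up is slightly more careful than the paper's — you explicitly note $N_{\rm out}(V')=\emptyset$ and restrict the constructed matching to edges with $v\in F$, which the paper's definition of $M'$ leaves implicit.
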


\begin{proof} First observe that $G_F$ is bipartite. Indeed $G_F = (V_F, E_F)$ has  the bipartition $V_F = V_1 \cup V_2$, where $V_1$ are the source nodes in $G_F$ and  $V_2$ are the destination nodes in $G_F$. 

Turning to the proof,  suppose $G$ has an allowed matching $M$. Define $M'$ to be the collection of edges $(u_{\rm src}, v_{\rm dst})$ in the splitting of $G$ such that $(u,v) \in M$.  Then because $M$ was a matching in $G$, it immediately follows that $M'$ is a matching in $G_F$. Furthermore, because $M$ is an allowed matching in $G$ we have that every node in $V_2$ (i.e., every destination node) is matched in $M'$. Consequently, the size of $M'$ is $|V_1| = |F|$. 

Conversely, given a matching $M'$ in $G_F$ of cardinality $|F|$, observe that since $G_F$ is bipartite and $|V_2| = |F|$, we have that every node in $V_2$ is matched. We then define $M$ to be the collection of edges $(u,v)$ in $G$ such that $(u_{\rm src}, v_{\rm dst})$ or $(v_{\rm dst}, u_{\rm src})$ is in $G_F$ (recall that $G_F$ is undirected). Then because $M'$ is a matching in $G_F$ we obtain that $M$ is a matching in $G$. Moreover, since every node in $V_2$ is matched in $M'$, we get that $M$ is an allowed matching. 

Finally, the last paragraph also tells us how to recover an allowed matching from a matching in $G_F$ of size $|F|$: for each node in $F$, we simply look
at the vertex that it is matched to in $G_F$ and add that as the destination of a matching in $G$. This process takes $O(n)$ operations. 
\end{proof}

An immediate consequence of this proposition is that we may find an allowed matching (or declare that none exists) in $O(m \sqrt{n})$ operations: first $O(m)$ operations to construct the list of edges in $G_F$, then $O(m \sqrt{n})$ operations to run
Hopcroft-Karp with this list of edges as an input to find a maximum matching in $G_F$ and then $O(n)$ additional operations to write down an allowed matching in $G$ (or declare that no allowed matching exists if the size of the maximum matching in $G_F$ is less than $|F|$).

\subsection{The augmentation procedure for minimum cost allowed matchings\label{sec:augment}}

Let us recap our progress thus far. We began in Section \ref{sec:combinatorial} by pointing out that we can spend $O(m+n)$ operations to ensure our graph $G(A)$ has no isolated nodes, which allows us to assume throughout the remainder of the paper that $n \leq 2m$. In Proposition \ref{reformulation}, we showed that a solution to the minimum structural controllability problem 
can be recovered from a minimum cost allowed matching in a certain graph, and the construction of this graph and the recovery of this solution will take $O(m)$ operations. Subsequently, in Proposition \ref{prop:allexists} we observed that an allowed matching in the same graph can be found using the Hopcroft-Karp algorithm in $O(m\sqrt{n})$ additional operations.

 In this section, we further describe an ``augmentation procedure'' which, starting from an allowed matching, finds a minimum cost allowed matching in $O(m \sqrt{n})$ operations. Putting all these results together implies our main finding, which is that the complexity of the minimum structural controllability problem is $O(n + m \sqrt{n})$. 

Before jumping into the algorithm and the proof, we briefly summarize the intution behind our approach. 
It is natural to reformulate the problem of finding a minimum cost allowed matching in terms of flows and explore a Hopcroft-Karp approach of repeatedly augmenting flows along maximal collections of shortest paths (we refer the reader to the original paper \cite{hk} for a detailed explanation of this in the context of finding a maximum cardinality bipartite matching).  

This approach immediately runs into two difficulties. First, one can reduce the cost of a matching without changing how many nodes are unmatched, but rather by shifting the set of unmatched nodes; this means that considerable care is needed the correspondence between flows and matchings, i.e., the usual approach of having all unmatched nodes simply become sink nodes for flow will not work. Secondly, for such a flow-augmentation algorithm to succeed we must ensure  that the flow augmentation procedure we develop does not produce flows corresponding to matchings which are not allowed. Our main finding in this section is that a very careful modification of the usual Hopcroft-Karp approach can bypass these difficulties. 

Stating our algorithm first requires a new slew of definitions, which we now proceed to give. 
\bigskip

\medskip

We will refer to the two matchings in $G$ and the splitting of $G$ which can be obtained from one another by replacing
$(u,v)$ by $(u_{\rm src}, v_{\rm dst})$ and vice versa as {\em twin matchings.} See Figure \ref{fig-twin} and Figure \ref{fig-twin2} for two examples.
\begin{figure}
\begin{tikzpicture}[->, thick]
\SetVertexNormal[LineColor=black]
%\SetVertexNoLabel
\SetVertexMath

\node (d) at (-12.5,0) [circle, draw] {$d$};
\node (c) at (-10,0) [circle, draw] {$c$};
\node (b) at (-7.5,0) [circle, draw] {$b$};
\node (a) at (-5,0) [circle, draw] {$a$};

\node (dd) at (3,-2) [circle, draw] {$d_{\rm dst}$};
\node (cd) at (3,0) [circle, draw] {$c_{\rm dst}$};
\node (bd) at (3,2) [circle, draw] {$b_{\rm dst}$};
\node (ad) at (3,4) [circle, draw] {$a_{\rm dst}$};

\node (ds) at (-2,-2) [circle, draw] {$d_{\rm src}$};
\node (cs) at (-2,0) [circle, draw] {$c_{\rm src}$};
\node (bs) at (-2,2) [circle, draw] {$b_{\rm src}$};
\node (as) at (-2,4) [circle, draw] {$a_{\rm src}$};

\path[->, color=red]

(a) edge [bend left] (b)
(b) edge (c)
(b) edge [bend left] (a)
(as) edge (bd)
(bs) edge (ad)
(bs) edge (cd);

\path[->, color=blue]
(a) edge [out=115, in = 55, looseness=4] (a)
(b) edge [out=115, in = 55, looseness=4] (b)
(c) edge (d)
(as) edge (ad)
(bs) edge (bd)
(cs) edge (dd);

\end{tikzpicture}
\caption{On the left, the blue edges constitute a matching. The twin matching in the splitting is shown on the right.}\label{fig-twin}
\end{figure}
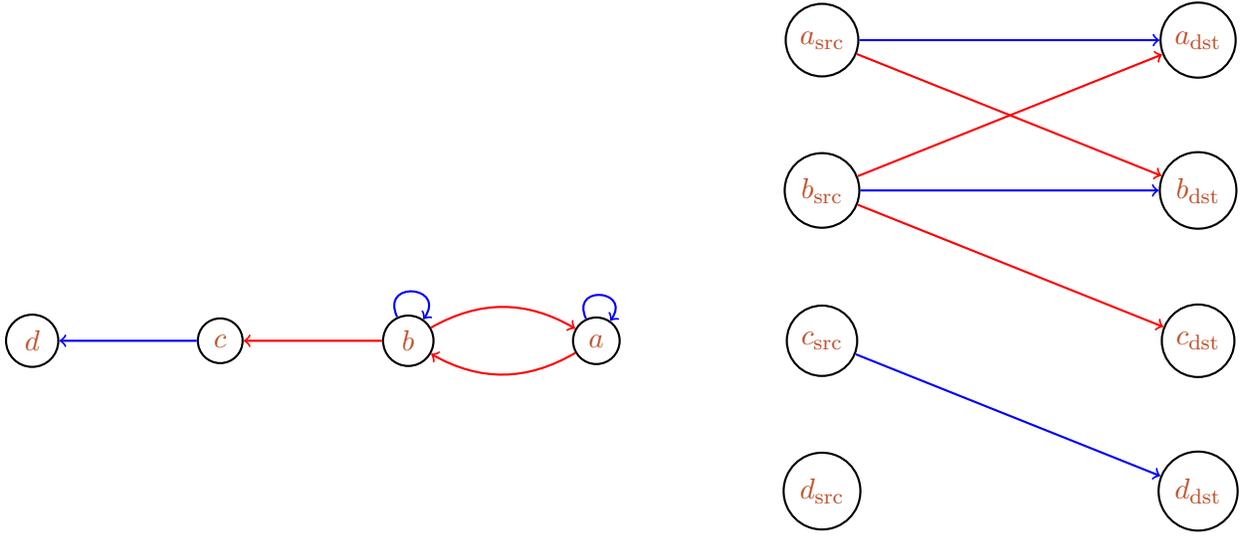 Given a directed graph $G$, we adopt the convention of using $l$ to denote the number of {\em source} strongly  connected components of $G$. The source connected components themselves will be denoted by $S_1, \ldots, S_l$. 

For example, in the graph on the left-hand side of Figure \ref{fig-split},
the only source connected component is $\{a,b\}$, hence $l=1$. The other two connected components, $\{c\}$ and $\{d\}$ are not source. 

 Given a directed graph $G$ and a matching $M$, we will use $X_1, \ldots, X_r$ to denote those source strongly connected components among $S_1, \ldots, S_l$ which do not have an unmatched node and  $Y_1, \ldots, Y_k$ to denote those which have a single unmatched node. Naturally, $r+k \leq l$. We stress that whether a source connected component gets classified as $X_i$ or as $Y_j$ (or neither) depends on the matching $M$. Moreover, we will use 
$Z_1, \ldots, Z_w$ to denote a listing of those among $S_{1}, \ldots, S_l$  which have two or more unmatched nodes {\bf and} all strongly connected components
which are not source. Thus $r+k+w$ is the total number of strongly connected components of $G$.

For example consider the matching shown on the left of Figure \ref{fig-twin}. There is only one source strongly connected component, namely $\{a,b\}$. It does not have any unmatched nodes, so $X_1 = \{a,b\}$. There are no other soure strongly connected component, so we do not have any $Y_i$. Finally, $Z_1 = \{c\}, Z_2 = \{d\}$. 

On the other hand, consider the matching on left of Figure \ref{fig-twin2}. The only source strongly connected component, namely $\{a,b\}$, has a single unmatched node. Thus $Y_1 = \{a,b\}$. There are no variables $X_i$. As in the previous paragraph,
$Z_1 = \{c\}$ and $Z_2 = \{d\}$.

\begin{figure}
\begin{tikzpicture}[->, thick]
\SetVertexNormal[LineColor=black]
%\SetVertexNoLabel
\SetVertexMath

\node (d) at (-12.5,0) [circle, draw] {$d$};
\node (c) at (-10,0) [circle, draw] {$c$};
\node (b) at (-7.5,0) [circle, draw] {$b$};
\node (a) at (-5,0) [circle, draw] {$a$};

\node (dd) at (3,-2) [circle, draw] {$d_{\rm dst}$};
\node (cd) at (3,0) [circle, draw] {$c_{\rm dst}$};
\node (bd) at (3,2) [circle, draw] {$b_{\rm dst}$};
\node (ad) at (3,4) [circle, draw] {$a_{\rm dst}$};

\node (ds) at (-2,-2) [circle, draw] {$d_{\rm src}$};
\node (cs) at (-2,0) [circle, draw] {$c_{\rm src}$};
\node (bs) at (-2,2) [circle, draw] {$b_{\rm src}$};
\node (as) at (-2,4) [circle, draw] {$a_{\rm src}$};

\path[->, color=red]
(a) edge [out=115, in = 55, looseness=4] (a)
(b) edge [out=115, in = 55, looseness=4] (b)
(b) edge [bend left] (a)
(bs) edge (ad)
(as) edge (ad)
(bs) edge (bd);

\path[->, color=blue]
(a) edge [bend left] (b)
(b) edge (c)
(c) edge (d)
(as) edge (bd)
(bs) edge (cd)
(cs) edge (dd);

\end{tikzpicture} \caption{On the left, the blue edges constitute a matching. The twin matching in the splitting is shown on the right.}\label{fig-twin2}
\end{figure}
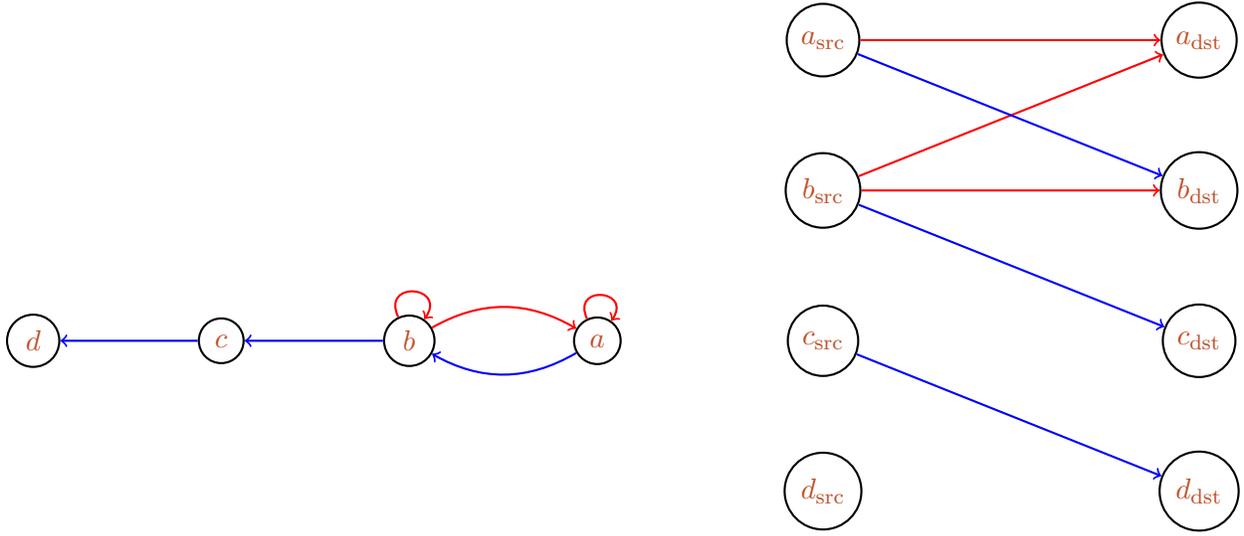 Recall that we have previously defined $U(M)$ to denote the set of unmatched nodes of the matching $M$. We now define $U'(M)$ to denote 
those unmatched nodes which do not lie in some $Y_i$. In other words $U'(M)$ is the set of unmatched nodes which  do not lie in a strongly connected component 
with only one unmatched node.

For the graph on the left-hand side of Figure \ref{fig-twin}, again letting $M$ be the matching consisting of the blue edges, we have
that $U(M) = U'(M) = \{c\}$. On the other hand, for the blue matching on the left-hand side of Figure \ref{fig-twin2}, we have that $U(M)=\{a\}$ but since $a$ lies in $Y_1$, we have that $U'(M)$ is empty.

Given a directed graph $G$ and a subset $A$ of vertices of $G$, we will use $[A]_{\rm dst}$ to denote the set of nodes in 
the splitting of $G$ which are the destination nodes of vertices in $A$. For example, in Figure \ref{fig-split}, $[\{a,b\}]_{\rm dst} = \{a_{\rm dst}, b_{\rm dst}\}$.

The cost of a matching $M$ in the splitting of $G$ is defined to be the number of unmatched destination nodes plus the number of 
$[S_1]_d, [S_2]_d, \ldots, [S_l]_d$ without an unmatched node.  Note that this is exactly equal to the (previously defined) cost of the twin matching of $M$ in $G$. For any
matching $M$ (either in the splitting or in the original graph) we will use $c(M)$ to denote the cost of $M$.

For example, in Figure \ref{fig-twin}, the matching on the left has cost $2$: one for the unmatched node $\{c\}$ and one for the source connected component $\{a,b\}$ without an unmatched node. The matching on the right has cost $2$ as well: one for unmatched $c_{\rm dst}$ and one for $\{a_{\rm dst}, b_{\rm dst} \}$, which is the set of destination nodes of a source connected component without an unmatched node. 

On the other hand, in Figure \ref{fig-twin2}, both matchings have cost $1$ for the unmatched node $\{a\}$ on the left and $a_{\rm dst}$ on the right.

We note that in any matching $M$ in the splitting of $G$, all the source nodes are always unmatched. The cost, however, depends on the number of unmatched destination nodes and may depend on exactly which destination nodes are matched.  Since a matching and its twin matching uniquely determine each other, we will sometimes  find it convenient use them interchangeably. For example given a matching $M$ in the splitting of $G$ and its twin matching $M'$ in 
$G$, we will sometimes say that a node $v$ in $G$ is unmatched under $M$; what is meant is that the node is unmatched under the twin of $M$, i.e., 
under $M'$. Conversely, we might say that the node $v_{\rm dst}$ is unmatched under $M'$; what is meant is that $v_{\rm dst}$ is unmatched under
the twin of $M'$, i.e., under $M$.

We will refer to a matching in the splitting of $G$ as {\em allowed} if every node in $[F]_d$ is matched. Naturally,  a matching in $G$ is an allowed matching if and only if its twin matching in the splitting of $G$ is. 

\bigskip

\medskip

With the above definitions in place, we now proceed to define the notion of a ``flow graph'' which corresponds to a graph $G$ and a matching in it. 

First, given a directed graph $G$ and a matching $M$ in $G$, we define its matched splitting, denoted by $G_{\rm s}(M)$, as follows: first we take the splitting of $G$ and reverse the orientation of every edge in $M$'s twin matching; then for each $i=1, \ldots, k$,  we add edges going from the single unmatched  node in $[Y_i]_d$ to all nodes in $[Y_i]_d \cap F^c$. See Figure \ref{fig-match-split} for an illustration.

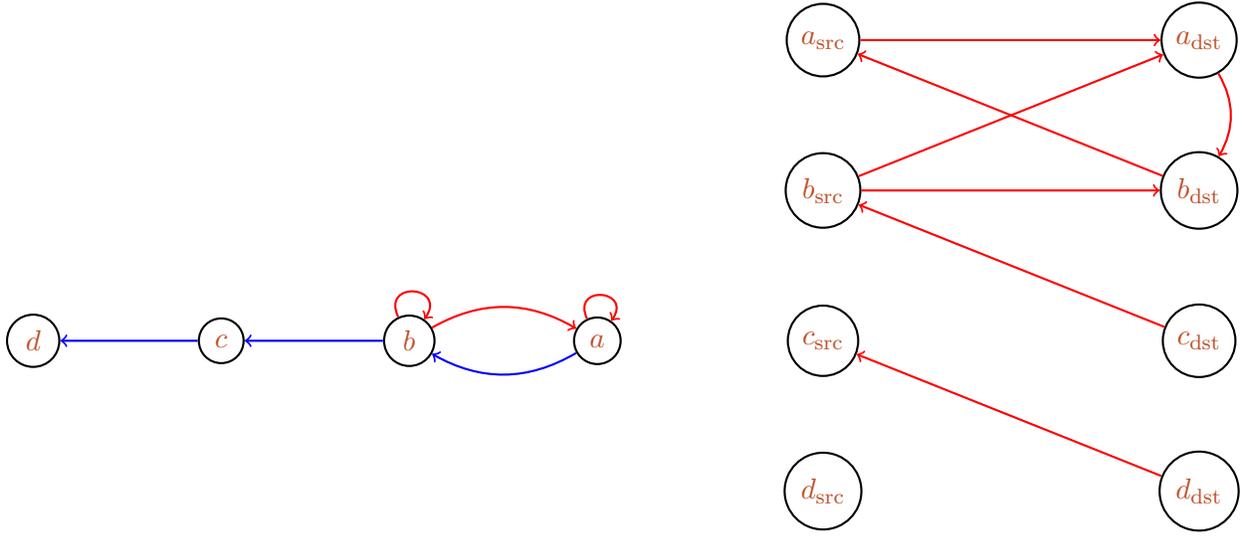
\begin{figure}
\begin{tikzpicture}[->, thick]
\SetVertexNormal[LineColor=black]
%\SetVertexNoLabel
\SetVertexMath

\node (d) at (-12.5,0) [circle, draw] {$d$};
\node (c) at (-10,0) [circle, draw] {$c$};
\node (b) at (-7.5,0) [circle, draw] {$b$};
\node (a) at (-5,0) [circle, draw] {$a$};

\node (dd) at (3,-2) [circle, draw] {$d_{\rm dst}$};
\node (cd) at (3,0) [circle, draw] {$c_{\rm dst}$};
\node (bd) at (3,2) [circle, draw] {$b_{\rm dst}$};
\node (ad) at (3,4) [circle, draw] {$a_{\rm dst}$};

\node (ds) at (-2,-2) [circle, draw] {$d_{\rm src}$};
\node (cs) at (-2,0) [circle, draw] {$c_{\rm src}$};
\node (bs) at (-2,2) [circle, draw] {$b_{\rm src}$};
\node (as) at (-2,4) [circle, draw] {$a_{\rm src}$};

\path[->, color=red]
(a) edge [out=115, in = 55, looseness=4] (a)
(b) edge [out=115, in = 55, looseness=4] (b)
(b) edge [bend left] (a)
(bs) edge (ad)
(as) edge (ad)
(bs) edge (bd);

\path[->, color=blue]
(a) edge [bend left] (b)
(b) edge (c)
(c) edge (d);
\path[->, color=red]
(bd) edge (as)
(cd) edge (bs)
(dd) edge (cs)
(ad) edge [bend left] (bd);

\end{tikzpicture} \caption{On the left, the blue edges constitute a matching. Taking $F=\emptyset$, on the right the matched splitting is shown. }\label{fig-match-split}
\end{figure}

Now, given a directed graph $G$ and a matching $M$ in $G$, we define the flow graph $G_{\rm flow}(M)$ as follows. We take 
$G_{\rm s}(M)$ and add vertices $s,t, s_1, \ldots, s_r$ (recall $r$ is the number of connected components of $G$ without an unmatched node in $M$). We add an edge going from $s$ to each source node without an outgoing edge in the twin matching of $M$; from $s$ to each $s_i$; from each $s_i$ to every vertex in $[X_i]_d \cap F^c$. We add links going from every $U'(M)$ which does not lie in a source connected component to $t$. Finally, we go through all the source connected components $S_1, \ldots, S_l$, and letting $k_i$ be the number of unmatched nodes in $S_i$, we create nodes $n^{(i)}_1, \ldots, n^{(i)}_{k_i - 1}$. In other words, we create extra nodes $n^{(i)}_j$ whose number is equal to one less than the number of unmatched nodes in  $S_i$. Finally, for each $i=1, \ldots, l$, we put an edge from each unmatched node in $[S_i]_d$ to all the nodes $n^{(i)}_j$ and from all the nodes $n^{(i)}_j$ to $t$. See Figure \ref{fig-flow-1}, Figure \ref{fig-flow-2}, and Figures \ref{fig-flow-3} \& \ref{fig-flow-4} for illustrations.

\begin{figure}
\begin{tikzpicture}[->, thick]
\SetVertexNormal[LineColor=black]
%\SetVertexNoLabel
\SetVertexMath

\node (d) at (-12.5,0) [circle, draw] {$d$};
\node (c) at (-10,0) [circle, draw] {$c$};
\node (b) at (-7.5,0) [circle, draw] {$b$};
\node (a) at (-5,0) [circle, draw] {$a$};

\node (s) at (5,3) [circle, draw] {$s$};
\node (t) at (5,-1) [circle, draw] {$t$};

\node (dd) at (1,-2) [circle, draw] {$d_{\rm dst}$};
\node (cd) at (1,0) [circle, draw] {$c_{\rm dst}$};
\node (bd) at (1,2) [circle, draw] {$b_{\rm dst}$};
\node (ad) at (1,4) [circle, draw] {$a_{\rm dst}$};

\node (ds) at (-3,-2) [circle, draw] {$d_{\rm src}$};
\node (cs) at (-3,0) [circle, draw] {$c_{\rm src}$};
\node (bs) at (-3,2) [circle, draw] {$b_{\rm src}$};
\node (as) at (-3,4) [circle, draw] {$a_{\rm src}$};

\path[->, color=red]
(a) edge [out=115, in = 55, looseness=4] (a)
(b) edge [out=115, in = 55, looseness=4] (b)
(b) edge [bend left] (a)
(bs) edge (ad)
(as) edge (ad)
(bs) edge (bd);

\path[->, color=blue]
(a) edge [bend left] (b)
(b) edge (c)
(c) edge (d);
\path[->, color=red]
(bd) edge (as)
(cd) edge (bs)
(dd) edge (cs)
(ad) edge [bend left] (bd)
(s) edge [bend left] (ds);

\end{tikzpicture} \caption{On the left, the blue edges constitute a matching. On the right, taking $F=\emptyset$, the flow graph is shown. Note that because $r=0$ (namely, there are no strongly connected components in the original graph without unmatched nodes) the nodes $s_i$ are not present. Moreover, because the only source strongly connected component has a single unmatched node, no nodes $n^{(i)}_j$ are present.}\label{fig-flow-1}
\end{figure}
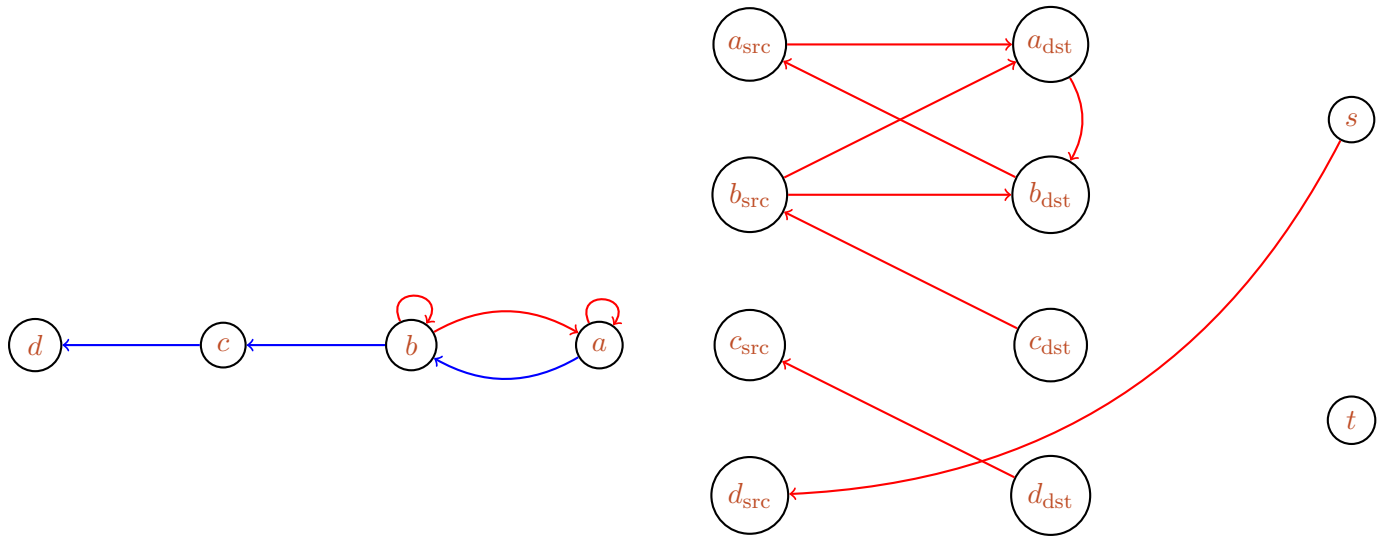

\begin{figure}
\begin{tikzpicture}[->, thick]
\SetVertexNormal[LineColor=black]
%\SetVertexNoLabel
\SetVertexMath

\node (d) at (-12.5,0) [circle, draw] {$d$};
\node (c) at (-10,0) [circle, draw] {$c$};
\node (b) at (-7.5,0) [circle, draw] {$b$};
\node (a) at (-5,0) [circle, draw] {$a$};

\node (s) at (5,3) [circle, draw] {$s$};
\node (s1) at (3,3) [circle, draw] {$s_1$};
\node (t) at (5,-1) [circle, draw] {$t$};

\node (dd) at (1,-2) [circle, draw] {$d_{\rm dst}$};
\node (cd) at (1,0) [circle, draw] {$c_{\rm dst}$};
\node (bd) at (1,2) [circle, draw] {$b_{\rm dst}$};
\node (ad) at (1,4) [circle, draw] {$a_{\rm dst}$};

\node (ds) at (-3,-2) [circle, draw] {$d_{\rm src}$};
\node (cs) at (-3,0) [circle, draw] {$c_{\rm src}$};
\node (bs) at (-3,2) [circle, draw] {$b_{\rm src}$};
\node (as) at (-3,4) [circle, draw] {$a_{\rm src}$};

\path[->, color=red]

(a) edge [bend left] (b)
(b) edge (c)
(b) edge [bend left] (a)
(as) edge (bd)
(bs) edge (ad)
(bs) edge (cd)
(s) edge (s1)
(s1) edge (ad)
(s1) edge (bd)
(cd) edge (t);

\path[->, color=blue]
(a) edge [out=115, in = 55, looseness=4] (a)
(b) edge [out=115, in = 55, looseness=4] (b)
(c) edge (d);
\path[->, color=red]
(ad) edge (as)
(bd) edge (bs)
(dd) edge (cs)
(s) edge [bend left] (ds);

\end{tikzpicture}
\caption{On the left, the blue edges constitute a matching. Taking $F=\emptyset$, the flow graph is shown on the right.  Note that because the only source strongly connected component has no unmatched nodes, no nodes $n^{(i)}_j$ were added.}\label{fig-flow-2}
\end{figure}
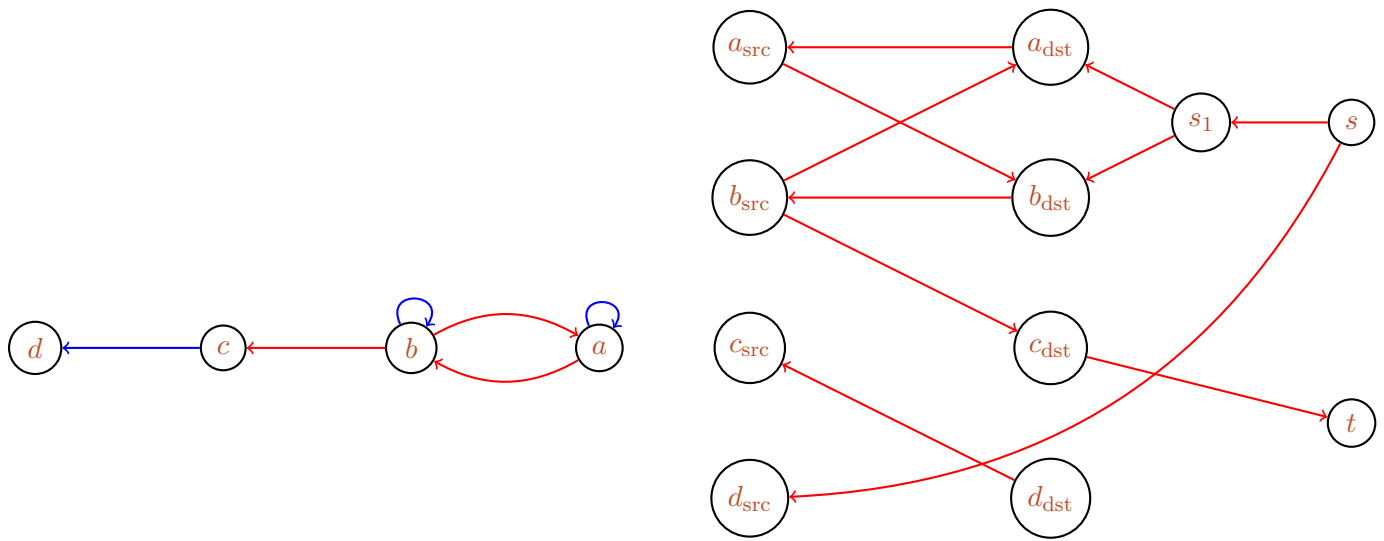

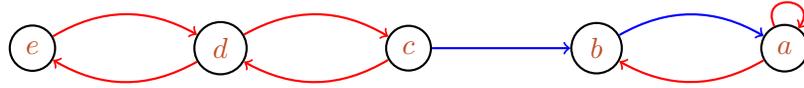
\begin{figure} \begin{center}
\begin{tikzpicture}[->, thick]
\SetVertexNormal[LineColor=black]
%\SetVertexNoLabel
\SetVertexMath

\node (e) at (-15,0) [circle, draw] {$e$};
\node (d) at (-12.5,0) [circle, draw] {$d$};
\node (c) at (-10,0) [circle, draw] {$c$};
\node (b) at (-7.5,0) [circle, draw] {$b$};
\node (a) at (-5,0) [circle, draw] {$a$};

\path[->, color=red]

(a) edge [bend left] (b);

\path[->, color=blue]
(c) edge (b)
(b) edge [bend left] (a);

\path[->, color=red]
(a) edge [out=115, in = 55, looseness=4] (a)
(d) edge [bend left] (c)
(c) edge [bend left] (d)
(d) edge [bend left] (e)
(e) edge [bend left] (d);

\end{tikzpicture}
\caption{The blue edges constitute a matching. Assuming $F = \emptyset$, he flow graph of this is shown in Figure \ref{fig-flow-4}.}\label{fig-flow-3} \end{center}
\end{figure}

\begin{figure} \begin{center}
\begin{tikzpicture}[->, thick]
\SetVertexNormal[LineColor=black]
%\SetVertexNoLabel
\SetVertexMath

\node (s) at (5.5,3) [circle, draw] {$s$};
\node (t) at (9,-1) [circle, draw] {$t$};
\node (n1) at (7,-3) [circle, draw] {$n^{(1)}_2$};
\node (n2) at (7,-1) [circle, draw] {$n^{(1)}_1$};

\node (ed) at (1,-4) [circle, draw] {$e_{\rm dst}$};
\node (dd) at (1,-2) [circle, draw] {$d_{\rm dst}$};
\node (cd) at (1,0) [circle, draw] {$c_{\rm dst}$};
\node (bd) at (1,2) [circle, draw] {$b_{\rm dst}$};
\node (ad) at (1,4) [circle, draw] {$a_{\rm dst}$};

\node (es) at (-3,-4) [circle, draw] {$e_{\rm src}$};
\node (ds) at (-3,-2) [circle, draw] {$d_{\rm src}$};
\node (cs) at (-3,0) [circle, draw] {$c_{\rm src}$};
\node (bs) at (-3,2) [circle, draw] {$b_{\rm src}$};
\node (as) at (-3,4) [circle, draw] {$a_{\rm src}$};

\path[->, color=red]

(as) edge (bd)
(ad) edge (bs)
(cd) edge (bs)
(cs) edge (dd)
(ds) edge (ed)
(ds) edge (cd)
(es) edge (dd)
(n1) edge (t)
(n2) edge (t)
(cd) edge (n1)
(cd) edge (n2)
(dd) edge (n1)
(dd) edge (n2)
(ed) edge (n1)
(ed) edge (n2)
(s) edge [bend right] (as);

\path[->, color=red]
(as) edge (ad)
(bd) edge (cs)
(s) edge [bend left] (ds)
(s) edge [bend left] (es);

\end{tikzpicture}
\caption{The flow graph of the graph and matching from Figure \ref{fig-flow-3}. Note that because there are no strongly connected components with no unmatched nodes, the nodes $s_i$ are not present. }\label{fig-flow-4} \end{center}
\end{figure}
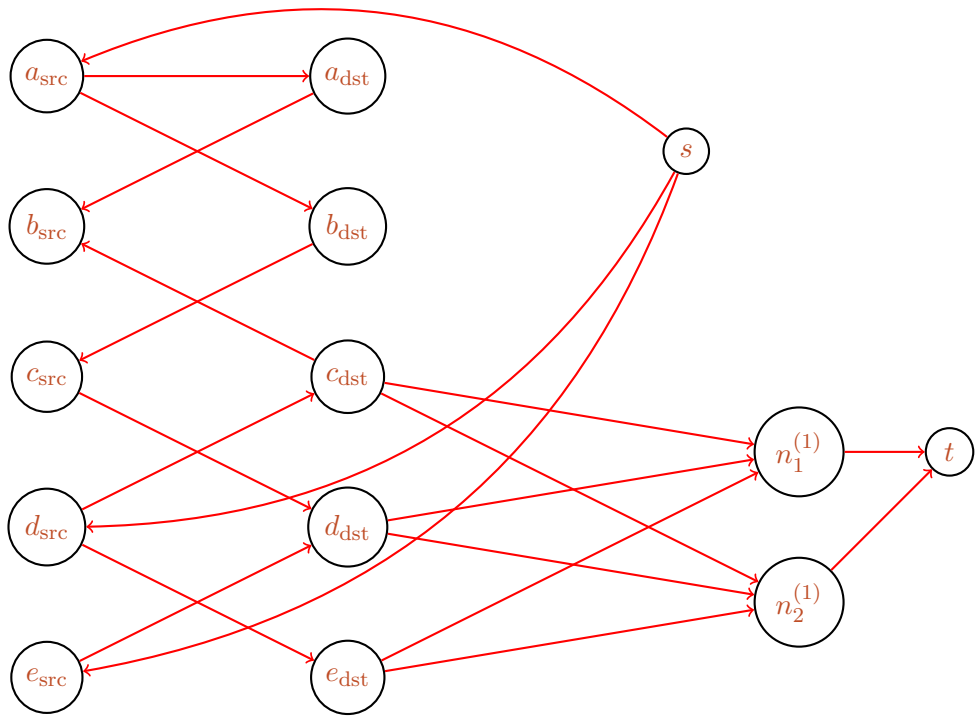

Since a matching uniquely determines its twin matching, we will  also use the notation $G_{\rm flow}(M)$ also for matchings $M$ in the splitting of $G$.  Given a directed graph $G$ and a matching $M$, we will refer to all edges in $G_{\rm s}(M)$ that go from a source to a 
destination node or vice versa as {\em core edges}. By construction, the only edges in $G_{\rm s}(M)$ which are not core edges are those that
go from the single unmatched vertex in each $[Y_i]_d$ to the nodes in $[Y_i]_d \cap F^c$. We will refer to core edges in $G_{\rm s}(M)$ also as core edges in $G_{\rm flow}(M)$. Note that the set of non-core edges in $G_{\rm flow}(M)$ 
additionally includes all edges incident upon $s,t$, and one of the $s_i$ or $n^{(i)}_j$. Finally, we will refer to any source or destination node in $G_{\rm flow}(M)$ as {\em core nodes}. Note that $s,t$ and the nodes $s_i, n^{(i)}_j$ are the only nodes in $G_{\rm flow}$ which are not core nodes.

\subsubsection{The augmentation procedure} 

Suppose $M$ is a matching in the splitting of $G$ and consider the corresponding graph $G_{\rm flow}$. Let $p$ be either:
\begin{enumerate} \item A cycle in $G_{\rm flow}$.
\item A path in $G_{\rm flow}$ which does not begin 
at a source node with an outgoing link in $M$ and does not end at a destination node which is matched in $M$.
\end{enumerate}

The augmentation of $M$ on $p$ is the collection of edges $M'$ defined as follows: we remove from $M$ the reverse of every edge in $p$ that goes from a destination node to a source node\footnote{Recall that edges in the matching $M$ had their direction reversed in the matched splitting $G_{\rm s}(M)$, and consequently appear reversed in $G_{\rm flow}(M)$.} and adding every edge in $p$ that goes from a source node to a destination node. 

If ${\cal P}$ is a collection of vertex-disjoint paths (satisfying item (2) above) and cycles in $G_{\rm flow}$, then the augmentation of $M$ on ${\cal P}$ is obtained by sequentially augmenting on each path and cycle in $p$,
in arbitrary order.

For example, consider the flow graph of Figure \ref{fig-flow-1}. Consider the twin matching of the matching in blue; this is the matching shown on the right in Figure \ref{fig-twin2}. Consider performing the augmentation with the cycle $(a_{\rm dst}, b_{\rm dst}), (b_{\rm dst}, a_{\rm src}), (a_{\rm src}, b_{\rm dst})$ in the graph $G_{\rm flow}$. The only core edges in this length-three cycle are 
$(b_{\rm dst}, a_{\rm src})$ and $(a_{\rm src}, b_{\rm src})$. Performing the augmentation 
procedure therefore removes the edge $(a_{\rm src}, b_{\rm dst})$ from the matching and 
adds instead the edge $(a_{\rm src}, a_{\rm dst})$. Note that the result is a matching in which $a_{\rm dst}$ has become matched 
and $b_{\rm dst}$ has become unmatched. 

It turns out that if $M$ is a matching in the splitting of $G$, then so is $M'$. However, we will not prove this now (this is Lemma \ref{lemma:allowed} below).

\subsubsection{The algorithm for finding a minimum cost allowed matching\label{sec:alg}}

Having introduced the definition of the graph $G_{\rm flow}(M)$ and described the augmentation procedure, we  now turn to the 
description of our algorithm for finding minimum cost allowed matchings. We assume that the algorithm is initialized at some allowed matching $M$ in $G$.

We first need to define the notion of a {\em maximal vertex-disjoint collection of shortest paths in a directed graph} (which is actually more-or-less self explanatory). A collection ${\cal S}$ of paths 
from $a$ to $b$ in some directed graph is a maximal vertex-disjoint collection of shortest paths from $a$ to $b$ if (i) each path in ${\cal S}$ is a shortest path from $a$ to $b$ (ii) any two paths in ${\cal S}$ have only the nodes $a$ and $b$ in common (iii) it is not possible to add another path to ${\cal S}$ and still satisfy (i) and (ii). With this in mind, our algorithm is as follows; we will refer to it as {\em the augmentation algorithm}.

\begin{algorithm} \begin{enumerate} \item Construct $G_{\rm flow}(M)$. 
\item Find a maximal collection of vertex-disjoint shortest paths from $s$ to $t$ in $G_{\rm flow}(M)$. Call this collection ${\cal P}$. 
\item  If ${\cal P}$ is empty (i.e., if there is no path from $s$ to $t$ in $G_{\rm flow}$) then terminate. \\
Else replace $M$ with the augmentation of $M$ on ${\cal P}$.
\end{enumerate}
\end{algorithm}

Note the similarity of this algorithm to the standard Hopcroft-Karp method. It remains to analyze the performance of the algorithm. We will prove the following two facts. 

\begin{proposition} Each iteration of this algorithm [i.e., each execution of the steps (1), (2), (3)] can be implemented in $O(m)$ operations. 
\label{prop:opcount}
\end{proposition}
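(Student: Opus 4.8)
The plan is to bound the cost of a single iteration of the augmentation algorithm, which consists of three steps: building $G_{\rm flow}(M)$, finding a maximal vertex-disjoint collection of shortest $s$-$t$ paths, and performing the augmentation along that collection. I would treat the three steps in order and argue each is $O(m)$, using throughout the standing assumption $n \le 2m$ so that an $O(m)$ bound absorbs any $O(n)$ or $O(m+n)$ subroutine.

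First, for step (1), I would carefully account for the size of $G_{\rm flow}(M)$ and the work to construct it. The splitting of $G$ has $2n$ nodes and $m$ edges; reversing the edges of the twin matching of $M$ is $O(m)$ once $M$ is known. Computing the strongly connected components and identifying the source ones (to determine the $X_i$, $Y_j$, $Z_\ell$ classification, the numbers $k_i$ of unmatched nodes per source component, and the sets $U(M)$, $U'(M)$) takes $O(m)$ via a linear-time SCC algorithm such as Kosaraju's or Tarjan's. The added vertices are $s$, $t$, the $r$ vertices $s_i$ (one per source SCC with no unmatched node) and the $\sum_i (k_i - 1)$ vertices $n^{(i)}_j$; since $\sum_i (k_i-1) \le \sum_i k_i \le n$ and $r \le l \le n$, the vertex count stays $O(n) = O(m)$. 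The edges added from $s$ to source nodes, from $s$ to $s_i$, from $s_i$ into $[X_i]_d \cap F^c$, from $U'(M)$-nodes to $t$, from unmatched $[S_i]_d$ nodes to the $n^{(i)}_j$ and from those to $t$, and the non-core edges from the unmatched node of each $[Y_i]_d$ to $[Y_i]_d \cap F^c$ — all of these are bounded: the potentially worrisome ones are $\sum_i |X_i| \le n$, $\sum_i |Y_i| \le n$, and $\sum_i (\text{unmatched in } S_i)\cdot(k_i-1) \le \sum_i k_i \cdot k_i$. I should check this last quantity: since the $S_i$ are disjoint and each has $k_i$ unmatched nodes with $k_i \le |S_i|$, we get $\sum_i k_i(k_i-1) \le \sum_i |S_i|\cdot k_i \le \sum_i |S_i| \cdot |S_i|$, which is not obviously $O(m)$. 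The right bound is instead $\sum_i k_i (k_i-1)$; but a source SCC with $k_i \ge 2$ unmatched nodes is strongly connected, hence contains at least $|S_i|$ internal edges, so $k_i \le |S_i| \le$ (internal edge count of $S_i$), and since the SCCs are edge-disjoint, $\sum_i k_i(k_i-1) \le \sum_i |S_i| \cdot (\text{edges in } S_i) $ — I would need a cleaner argument here, most likely observing that each $n^{(i)}_j$ has degree exactly $k_i + 1$ and $\sum_i (k_i - 1)(k_i+1) = \sum_i (k_i^2 - 1)$, bounding $k_i^2 \le k_i \cdot |S_i|$ and using that a strongly connected component on $|S_i|$ vertices has at least $|S_i|$ edges, all of which lie inside $S_i$ and hence are counted disjointly across $i$. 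This is the one spot requiring genuine care, and I expect it to be the main obstacle: showing that the gadget edges introduced for source components with many unmatched nodes total $O(m)$.

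Second, for step (2): finding a maximal vertex-disjoint collection of shortest $s$-$t$ paths in a graph with $O(m)$ edges is a standard linear-time computation — one BFS layering from $s$ to find the shortest-path distance to $t$ and the layered subgraph, followed by a repeated DFS on the layered DAG that deletes exhausted vertices/edges, exactly as in the Hopcroft-Karp inner loop. Each edge is traversed $O(1)$ times amortized, so this is $O(m)$. Third, step (3), the augmentation itself: performing the augmentation of $M$ along ${\cal P}$ means, for each edge of each path in ${\cal P}$, either deleting the reversed matching edge or adding a new matching edge; since the paths in ${\cal P}$ are vertex-disjoint and each has $O(m)$ total edges, and updating the incidence structure for each affected core node is $O(1)$, the whole augmentation is $O(m)$. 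I would also note that one may carry along incrementally whatever auxiliary bookkeeping (the SCC classification, $U'(M)$, etc.) is needed to rebuild $G_{\rm flow}$ in the next iteration, but since we recompute SCCs from scratch each iteration in $O(m)$ anyway, no incremental maintenance is strictly required. Summing the three steps gives $O(m)$ per iteration, which is the claim.
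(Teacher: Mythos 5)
The central claim your proposal needs — that the total number of gadget edges from unmatched destination nodes in $[S_i]_d$ to the vertices $n^{(i)}_j$ is $O(m)$ — is false, and the paper does not try to establish it. Consider a star: one hub $c$ with a directed edge to and from each of $n-1$ leaves, so $m=O(n)$ and the whole graph is a single source SCC. Every leaf's destination node has in-degree one (its only in-edge comes from $c_{\rm src}$) and $c_{\rm dst}$ has in-degree $n-1$; any matching in the splitting matches at most two destination vertices, so $k_1 \ge n-2$, producing $k_1-1 = \Theta(n)$ nodes $n^{(1)}_j$ and $k_1(k_1-1) = \Theta(n^2)$ edges between $[S_1]_d$ and the $n^{(1)}_j$, while $m=O(n)$. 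So there is no hope of bounding this edge set by $O(m)$, and your attempted estimate via ``a strongly connected component on $|S_i|$ vertices has at least $|S_i|$ internal edges'' gives $k_i(k_i-1) \le |S_i|^2$, not $|S_i|$ or the edge count. This same unbounded blow-up also invalidates your step (2) argument, which quietly assumes $G_{\rm flow}(M)$ has $O(m)$ edges when running the BFS/DFS.

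The paper's actual resolution is different in kind: it concedes these edges can number $\Omega(n^2)$ and therefore \emph{never materializes them}. The complete bipartite structure between the $k_i$ unmatched destination nodes of $[S_i]_d$ and the $k_i-1$ nodes $n^{(i)}_j$ is stored implicitly as two lists per source SCC. Because the $n^{(i)}_j$ within a given $S_i$ are interchangeable (all have identical in- and out-neighborhoods), the BFS layering and the path-peeling DFS of Hopcroft--Karp can be simulated on this implicit representation in $O(m)$ time: one maintains, per $S_i$, the set of still-available unmatched destination nodes on the shortest-path DAG and a counter of still-available $n^{(i)}_j$, and processes them without ever touching individual gadget edges. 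Your step (1) and step (3) accounting is otherwise fine (and your appeal to $n \le 2m$ is the right hygiene), but without the implicit-representation idea the proof does not go through, and you cannot patch it by a counting argument because the quantity you want to bound genuinely is not $O(m)$.
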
 

\begin{theorem} The algorithm terminates after $O(\sqrt{n})$ iterations with $M$ an allowed matching of minimum cost. \label{thm}
\end{theorem}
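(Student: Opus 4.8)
\textbf{Proof proposal for Theorem \ref{thm}.}

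The plan is to mimic the classical Hopcroft--Karp analysis, but carried out on the flow graph $G_{\rm flow}(M)$ rather than on a bipartite matching graph, and to track the cost $c(M)$ rather than the cardinality of a matching. The first order of business is a correctness lemma (to be stated and proved before the main argument): if $\cal P$ is a maximal vertex-disjoint collection of shortest $s$--$t$ paths in $G_{\rm flow}(M)$ and $M'$ is the augmentation of $M$ on $\cal P$, then (a) $M'$ is again a matching in the splitting of $G$; (b) $M'$ is again allowed; and (c) $c(M') = c(M) - |{\cal P}|$. Part (a) is essentially the combinatorial fact alluded to just after the augmentation procedure is defined (Lemma \ref{lemma:allowed}); part (b) is exactly why the non-core edges of $G_{\rm flow}$ were restricted to land only in $F^c$ and why each $[Y_i]_d$ has its lone unmatched node wired only to $F^c$-nodes, so that augmentation can never strand an $F$-node; part (c) is a bookkeeping claim: each $s$--$t$ path either flips an unmatched core destination node to matched (removing one unit of ``unmatched'' cost) or connects into one of the gadgets $s_i$, $n_j^{(i)}$ that were built precisely to represent ``a source SCC without an unmatched node'' or ``a surplus unmatched node in an SCC,'' so that augmenting along the path decreases the cost by exactly one.

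Next I would establish a \emph{min-cost characterization}: a matching $M$ has minimum cost among allowed matchings if and only if there is no $s$--$t$ path in $G_{\rm flow}(M)$. The ``if'' direction is the substantive half. I would prove its contrapositive in the standard way: given an allowed matching $M$ and an allowed matching $\widehat M$ of strictly smaller cost, consider the symmetric difference of the two (in the splitting), augmented with the auxiliary edges of the flow gadgets; a counting/parity argument on this symmetric-difference subgraph shows it decomposes into paths and cycles, at least $c(M)-c(\widehat M)$ of which are augmenting $s$--$t$ paths in $G_{\rm flow}(M)$, hence at least one such path exists. This is the place where the two ``difficulties'' flagged in the intuition paragraph must be confronted head-on, and it is the step I expect to be the main obstacle: one must verify that shifting which nodes are unmatched (without changing their number) is faithfully captured as flow through the $s_i$/$n_j^{(i)}$ gadgets, and that the symmetric difference never uses an edge that would correspond to an $F$-node becoming unmatched.

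With those two lemmas in hand, the iteration bound is the classical Hopcroft--Karp argument. Let $\ell(M)$ denote the length of a shortest $s$--$t$ path in $G_{\rm flow}(M)$. I would show (i) $\ell(M')>\ell(M)$ after augmenting on a maximal vertex-disjoint collection of shortest paths --- the shortest-path length is strictly monotone across phases, by the usual argument that any new short augmenting path would have had to be addable to $\cal P$, contradicting maximality; and (ii) once $\ell(M)>2\sqrt{n}$, the gap $c(M)-c^*$ between the current cost and the optimal cost $c^*$ is at most $\sqrt{n}$, because $c(M)-c^*$ vertex-disjoint $s$--$t$ paths exist in $G_{\rm flow}(M)$ (by the symmetric-difference argument applied to an optimal matching), they are vertex-disjoint on the $O(n)$ core nodes, and each has length $>2\sqrt{n}$, forcing their number to be $<\sqrt{n}$. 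Combining: the first $O(\sqrt n)$ phases bring $\ell(M)$ above $2\sqrt n$ (lengths are distinct integers increasing each phase), and then at most $\sqrt n$ further phases suffice since each decreases $c(M)$ by at least one. Hence $O(\sqrt n)$ iterations total, and by correctness the algorithm halts (Step 3, when $\cal P$ is empty) precisely at an allowed matching of minimum cost. The one subtlety relative to textbook Hopcroft--Karp is that vertex-disjointness must be argued on core nodes only (the gadget nodes $s_i,n_j^{(i)}$ may be shared), but since there are $\Theta(n)$ core nodes and each long path visits $>2\sqrt n$ of them, the counting goes through unchanged.
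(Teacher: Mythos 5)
Your high-level skeleton matches the paper's: establish that cost drops by one per path in $\cal P$ (the paper's Lemma \ref{lemma:allowed}), show that $c(M)-c^*$ vertex-disjoint $s$--$t$ paths exist in $G_{\rm flow}(M)$ (Lemma \ref{lemma:pathscount}), show that the shortest $s$--$t$ distance strictly increases each phase (Lemma \ref{lemma:stincr}), and then run the Hopcroft--Karp two-phase count. You also correctly flag the symmetric-difference lemma as delicate. However, the proposal has a genuine gap in the other direction: you treat the monotonicity step as routine --- ``by the usual argument that any new short augmenting path would have had to be addable to $\cal P$, contradicting maximality'' --- and this is exactly where the textbook argument does \emph{not} transfer. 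In classical Hopcroft--Karp, the residual graph for $M'$ is obtained from that of $M$ by reversing the arcs on $\cal P$ and nothing else, which is what makes the phase-length argument work. Here $G_{\rm flow}(M')$ is \emph{not} $G_{\rm flow}(M)$ with some arcs reversed: new non-core edges appear out of the new lone-unmatched node of each $[Y_j]_d$, the gadget nodes $s_i$ and $n_j^{(i)}$ and their incident edges appear and disappear as the classification of source SCCs into $X$, $Y$, $Z$ shifts under augmentation, and arcs from $s$ are added or removed. The paper has to decompose the transformation $G_{\rm flow}(M) \to G_{\rm flow}(M')$ into six explicit steps, prove (Lemma \ref{lemma:donot}) that certain distance-decreasing modifications never occur, and, crucially, show that the maximal vertex-disjoint collection of shortest paths can be enlarged to a \emph{saturating} arc set before invoking the reversal lemmas (Lemmas \ref{lemma:rev:nonincr}--\ref{lemma:mainincr}). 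None of this is ``the usual argument,'' and in fact the paper explicitly flags this theorem as considerably harder than its Hopcroft--Karp analogue. So while you locate the right pair of lemmas, you misjudge which one is routine: both are load-bearing and both require substantial work, with the monotonicity lemma arguably the trickier of the two.

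A secondary issue: you assert that the gadget nodes $s_i, n_j^{(i)}$ ``may be shared'' across vertex-disjoint paths. They cannot --- each $s_i$ has a single incoming arc (from $s$) and each $n_j^{(i)}$ a single outgoing arc (to $t$), so sharing one would force sharing an arc. Indeed the paper's construction in Lemma \ref{lemma:pathscount} goes out of its way to assign a \emph{distinct} $n_j^{(k)}$ to each path ending in a given $[S_j]_d$ and proves availability by a counting argument ($k_j-1$ gadget nodes for at most $k_j-1$ such paths). Your claim does not hurt the final count, but it signals that the symmetric-difference lemma --- which you already concede you have not carried out --- is being underestimated: the ``clumping'' of paths through $[Y_j]_d$, the extension of each clumped path into an $s$--$t$ path using available gadgets, and the verification that endpoints land in $[Z_j]_d$ rather than $[Y_j]_d$ are all necessary and nontrivial.
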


\bigskip

Of course, putting these two facts together immediately implies that the minimum cost allowed matching is found by the algorithm after
$O(m \sqrt{n})$ steps.

Proposition \ref{prop:opcount} is easy to show and we will launch into its proof shortly. By contrast, Theorem \ref{thm} will  require
considerable exertions on our part and will only be proved at the end  of Section \ref{sec:analysis}.  We remark that Theorem \ref{thm} is the main technical result of this paper and seems to be considerably more difficult to show than the analogous statement shown by Hopcroft and Karp \cite{hk} in the context of bipartite matchings. 

We conclude Section \ref{sec:alg} by proving Proposition \ref{prop:opcount}.

\begin{proof}[Proof of Proposition \ref{prop:opcount}] First, we argue that the graph $G_{\rm flow}(M)$  takes $O(m)$ operations to construct. Indeed, adding edges between source and destination nodes clearly takes $O(m)$ time. As we have previously described, we can use Kosaraju's algorithm to compute the strongly connected components of $G$ in  $O(m)$ time, and the remainder of the graph can be constructed in $O(m)$ time by inspection. The only issue is that we can have as many as $\Omega(n^2)$ edges going between destination nodes and various nodes $n^{(i)}_j$.  We bypass this by never writing down these edges directly in memory but rather just maintaining a table of unmatched destination nodes and their corresponding nodes $n^{(i)}_j$, if any. 

The fact that a maximal collection of vertex-disjoint shortest paths in a directed graph can be found 
in $O(\mbox{ number of edges} )$ time is standard, see for example the exposition in Section VIII.4 of \cite{comptop}.  We briefly summarize. We  first find all the edges which are on a shortest path from $s$ to $t$. These form a directed acyclic graph, and we keep track of the in-degree of each node in this graph. We then trace back a path from $t$ to $s$, delete the vertices on this path, adjust the in-degrees, and recursively delete all the nodes whose in-degree becomes zero as a result. We then trace back another path from $t$ to $s$ and so forth.

 We want to further argue this can be implemented in $O(m)$ time on $G_{\rm flow}(M)$, even though $G_{\rm flow}(M)$ can have $\Omega(n^2)$ edges going from destination nodes to nodes $n^{(i)}_j$. But indeed, we never need to write down these edges explicitly. At each stage of the above algorithm, we can simply
maintain a list of the destination nodes in each $[S_l]_d$ which are on a shortest path and have not been deleted and likewise of all the $n^{(i)}_j$ corresponding to $S_l$ which are on
the shortest path and have not been deleted, with the understanding that there is a outgoing link from each such node in $[S_l]_d$ to each such $n^{(i)}_j$.

Finally, augmenting on ${\cal P}$ (which has at most $O(m)$ edges in it) requires going through each edge in ${\cal P}$ and either deleting or 
adding it to the matching, and takes $O(m)$ operations as well. 
\end{proof}

\subsubsection{Analysis\label{sec:analysis}}

We now turn to the analysis of the augmentation procedure, culminating in the proof of Theorem \ref{thm}. We will need to carefully analyze what happens to each edge as we augment.  

%We state a quick lemma at this point to the effect that the augmentation of a matching is a matching. Although there is no need for it just yet, 
%this lemma is a good opportunity to state a condition under which the augmentation procedure  lowers cost. 

\begin{lemma} Suppose $M$ is a matching in the splitting of $G$ and $M'$ is the collection of edges obtained after performing the augmentation procedure. Then:
\begin{itemize} \item $M'$ is a matching in the splitting of $G$.  
\item If $p$ is a path from $s$ to $t$, then $M'$ has strictly lower cost than $M$.
\item If $p$ is a cycle, then $M'$ has the same cost as $M$.
\item If $M$ is an allowed matching and $p$ is a cycle or a path which does not begin from a node in $[F]_d$, then $M'$ is an 
allowed matching as well.
\end{itemize} \label{lemma:allowed}
\end{lemma}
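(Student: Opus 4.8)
\textbf{Proof plan for Lemma~\ref{lemma:allowed}.}

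The plan is to prove the four bullet points roughly in the order given, since each one builds on the structural understanding established by the previous. First I would fix the notation: write $p$ for the path or cycle being augmented, and classify the edges of $p$ according to the partition of edges in $G_{\rm flow}(M)$ into core edges (source-to-destination and the reversed destination-to-source matched edges) versus non-core edges (edges incident on $s,t$, the $s_i$, or the $n^{(i)}_j$, and the $[Y_i]_d$-internal edges). The augmentation only touches the matching through core edges, so the key observation is that every core node visited by $p$ is entered and left along core edges in an alternating fashion, except possibly at the two endpoints of a path. For a cycle this alternation is perfectly balanced; for a path satisfying item~(2) of the augmentation definition, the endpoints are constrained precisely so that the alternation at the ends also works out. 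To prove $M'$ is a matching in the splitting, I would check that no source node ends up with two outgoing matched edges and no destination node ends up with two incoming matched edges: this follows because along $p$ each core node has at most one incident edge of each of the two types (in/out of the matching), and augmentation swaps exactly these. To prove $M'$ is a matching \emph{in the splitting of $G$} (not just an arbitrary edge set), I additionally need that every edge $M'$ adds corresponds to an actual edge $(u,v)\in E$; this is automatic since added edges are core edges $(u_{\rm src},v_{\rm dst})$ of $G_{\rm s}(M)$, which come from edges of $G$.

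For the cost statements I would use the cost formula: $c(M)$ equals the number of unmatched destination nodes plus the number of $[S_i]_d$ with no unmatched node. The effect of augmenting on $p$ on ``which destination nodes are matched'' is completely determined by the non-core edges of $p$ incident on $s$, $t$, the $s_i$, or the $n^{(i)}_j$, because passing through a core node along two core edges leaves its matched/unmatched status unchanged — it just reroutes the match. So I would trace: an edge $s\to u_{\rm src}$ at the start contributes a newly matched destination downstream; an edge of the form (destination)$\to t$ directly, or (destination)$\to n^{(i)}_j\to t$, at the end makes that destination become unmatched (or, in the $n^{(i)}_j$ case, reduces by one the count of unmatched nodes in $S_i$ without creating a new source component needing a token); an edge $s\to s_i\to v_{\rm dst}$ makes $v_{\rm dst}$ matched, which kills the $[X_i]_d$ token; the $[Y_i]_d$-internal edges shift which node of $Y_i$ is the unmatched one, changing nothing about the count. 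Carefully bookkeeping every admissible shape of an $s$–$t$ path (it must start with one of the $s$-edges and end with one of the $t$-edges) shows the total cost drops by exactly one; for a cycle there are no edges touching $s,t,s_i,n^{(i)}_j$ (a cycle through such a node would have to revisit $s$ or $t$, impossible, or pass through $s_i$/$n^{(i)}_j$ which have all edges oriented away from/towards a single side), so the multiset of matched destination nodes, and hence the cost, is unchanged.

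For the last bullet, ``allowed'' means every node of $[F]_d$ is matched. Augmenting can only change the matched-status of a destination node if $p$ enters it as its first core node coming from $s$/$s_i$ (making it matched — harmless) or leaves it as its last core node toward $t$/$n^{(i)}_j$ (making it unmatched — the only danger). A node becomes unmatched only if $p$ ends there via an edge into $t$ or into some $n^{(i)}_j$; such a node lies in $U'(M)$ (an unmatched node not in a $Y_i$) or is an unmatched node of a source component, and by construction of $G_{\rm flow}(M)$ these danger-endpoints are never in $[F]_d$ — the edges to $t$ emanate from $U'(M)$-nodes and from $[Y_i]_d$'s unmatched node only through the $n^{(i)}_j$ mechanism, and the edges from $[Y_i]_d$'s unmatched node into $[Y_i]_d$ go only to $[Y_i]_d\cap F^c$. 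Similarly the $s_i$-edges only hit $[X_i]_d\cap F^c$. So the only way $p$ could unmatch an $[F]_d$ node is if it starts at that node and walks away from it toward $t$; but for a \emph{cycle} there is no start, and for a \emph{path} we have explicitly assumed $p$ does not begin at a node of $[F]_d$. Hence no $[F]_d$ node loses its match, and $M'$ is allowed.

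\textbf{Main obstacle.} I expect the real work to be the second and third bullets — the exact cost accounting. One has to enumerate every possible ``shape'' of an admissible $s$–$t$ path through the somewhat baroque gadget structure ($s\to$ source-without-outgoing-match, $s\to s_i\to[X_i]_d$, and at the $t$ end both direct destination$\to t$ links from $U'(M)$ and destination$\to n^{(i)}_j\to t$ links), verify that the net change to the matched-destination set and to the ``source components lacking an unmatched node'' count is exactly $-1$ in each case, and handle the subtle case where an augmenting path enters and leaves the \emph{same} source strongly connected component (so that a component could flip between the $X$/$Y$/$Z$ classification). Getting the $n^{(i)}_j$ counting right — that these nodes exactly absorb the ``one-less-than-the-number-of-unmatched-nodes'' slack so that draining one unit of flow through $S_i$ either removes one unmatched node or converts $S_i$ from a $Z$-type (two-or-more unmatched) component to a $Y$-type (one unmatched) one without changing cost incorrectly — is the crux, and mirrors why the flow graph was built this way in the first place.
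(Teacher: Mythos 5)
Your high-level structure (partition edges into core and non-core, track how non-core edges drive the change of matched set) matches the paper's approach, but the details contain systematic directional errors that would derail the cost accounting if carried out.

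\paragraph{The match-change directions are reversed.} You write that an edge ``(destination)$\to t$ directly, or (destination)$\to n^{(i)}_j\to t$, at the end makes that destination become unmatched,'' and that ``$s\to s_i\to v_{\rm dst}$ makes $v_{\rm dst}$ matched.'' Both are backwards. The last core node on an $s$--$t$ path is an unmatched destination node $d$; the path arrives at $d$ via a core edge $(u_{\rm src},d)$ which the augmentation \emph{adds}, so $d$ goes from unmatched to \emph{matched}. Conversely, a node $x_{\rm dst}\in[X_i]_d$ reached via $s\to s_i\to x_{\rm dst}$ is matched in $M$; the path exits $x_{\rm dst}$ along the reversed matching edge, which the augmentation \emph{removes}, so $x_{\rm dst}$ goes from matched to \emph{unmatched}. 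Your later statement ``A node becomes unmatched only if $p$ ends there via an edge into $t$ or into some $n^{(i)}_j$; such a node lies in $U'(M)$'' is internally inconsistent: nodes in $U'(M)$ are \emph{already} unmatched, so they cannot ``become unmatched.'' In the paper's version, the two cases where a destination node goes matched-to-unmatched are (i) the first core node of a path (reached directly or via $s_i$) and (ii) the target $b_{\rm dst}$ of a $[Y_i]_d$-internal non-core edge $(a_{\rm dst},b_{\rm dst})$; both targets lie in $F^c$ either by hypothesis or by construction of $G_{\rm flow}$, which is what makes the fourth bullet go through. You would need to flip all of these arrows and re-derive the cost change (which still comes out to $-1$ for a path and $0$ for a cycle, but through the bookkeeping you have currently inverted).

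\paragraph{The cycle argument has a gap.} You assert that ``for a cycle there are no edges touching $s,t,s_i,n^{(i)}_j$ \ldots\ so the multiset of matched destination nodes, and hence the cost, is unchanged.'' It is correct that a cycle cannot pass through $s,t,s_i,$ or $n^{(i)}_j$, but you have forgotten the remaining family of non-core edges: a cycle \emph{can} traverse a $[Y_i]_d$-internal edge $(a_{\rm dst},b_{\rm dst})$, in which case $a_{\rm dst}$ becomes matched and $b_{\rm dst}$ becomes unmatched, so the set of matched destinations does change. What is invariant is the \emph{number} of unmatched nodes within each $[Y_i]_d$ (it stays one), hence the cost is preserved; the multiset-unchanged claim is simply false. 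Relatedly, the blanket statement ``every core node visited by $p$ is entered and left along core edges in an alternating fashion, except possibly at the two endpoints'' fails at any $[Y_i]_d$-internal edge along $p$: the node $a_{\rm dst}$ is entered by a core edge but exits by a non-core one. The paper's Lemma~\ref{lemma:allowed} proof handles these boundary nodes explicitly rather than appealing to global alternation, and you would need to do the same.

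The first bullet (that $M'$ is a matching) is fine in spirit, though the paper's proof is more careful about the endpoint cases and about why a matched destination node appearing mid-path must have its old matching edge removed. Overall: right skeleton, but the cost bookkeeping needs to be redone with the correct arrow directions and with the $Y$-internal non-core edges accounted for in the cycle case.
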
 

\begin{proof} To show that $M'$ is a valid matching we must show two things: that no two of its edges leave the same node and that no two of its edges arrive at the same node. 

 We first argue that no two edges in $M'$ leave the same node. Indeed, observe that if the augmentation procedure adds  $(c,d)$ to $M'$ then $c$ must be a source node while $d$ must be a destination node; and moreover, one of the following conditions holds:
\begin{enumerate} \item $p$ is a path and $c$ is its first core node. Then $c$ has no outgoing link in $M$.
\item  Either $p$ is not a path, or $p$ is a path but $c$ is not its first core node. Then $c$ is matched in $M$ and
the augmentation on $p$ has  previously removed the edge $(c,e)$ from the matching for some $e$.
\end{enumerate} Consequently, no two edges leave the same node in $M'$. 

To see that no two edges are incoming on the same node in $M'$, suppose the augmentation adds $(c,d)$ to the matching. If $d$ is unmatched in $M$, there is no problem; and if $d$ is matched but $p$ is a cycle, this means we have previously removed some edge $(f,d)$ from $M$ so that, once again, there is no problem. Finally if $d$ is matched in $M$ and $p$ is a path, by assumption we have that $d$ is not the last node of the path $p$ (since the augmentation procedure is only performed on paths which do not end at matched destination nodes). Furthermore, by construction of $G_{\rm flow}$, we can further conclude that the next edge in $p$ is a core edge; indeed, non-core edges go out only from unmatched destination nodes. Thus once again we know that we have previously removed the  edge $(f,d)$ from the matching for some $f$. We conclude that no two edges in $M'$ are incoming on the same node. Thus  $M'$ is a matching. 

Now suppose $p$ is a path from $s$ to $t$ and let us consider how the set of matched destination nodes changes after performing the augmentation procedure. There are two possibilities. 

First, let us consider the case when the first edge in $p$ leads to an unmatched source node. Now each time the path takes the edge $(a_{\rm dst}, b_{\rm dst})$, we have that $a_{\rm dst}$ goes from unmatched-to-matched after the augmentation while $b_{\rm dst}$ goes from unmatched-to-matched. No other destination nodes besides all such $b_{\rm dst}$ become unmatched as a result of the augmentation
procedure, and the last core (destination) node on the path becomes matched. Since $a_{\rm dst}$ and $b_{\rm dst}$ belong to the same 
$[Y_i]_d$, we have that the cost strictly decreases by one.

Second, suppose the first edge in $p$ leads to some $s_i$, and the second edge in $p$ then leads to a matched destination node. This destination node goes from being matched in $M$ to unmatched in $M'$; and the destination node which is the {\em last} destination node goes from unmatched in $M$ to matched in $M'$. Moreover, each time  the non-core edge $(a_{\rm dst}, b_{\rm dst})$ appears in $p$
we again have that $a_{\rm dst}$ goes from being unmatched in $M$ to matched in $M'$, while $b_{\rm dst}$ goes from being matched in $M$ to being unmatched in $M'$. Again in this case both $a$ and $b$ belong to the same $[Y_i]_d$. Consequently, after performing the augmentation on the path $p$ under consideration, $[Y_i]_d$ still has a single unmatched node. 

Now recall that, by construction of $G_{\rm flow}$, the first destination node on the path must lie in some $[X_i]_d$ while the last destination node on the path
lies in $[U'(M)]_d$. Thus performing the augmentation on a path $p$ results in: (i) the set $[X_i]_d$ in which the first destination node of $p$ lies goes from having no unmatched node to a single unmatched node (ii) the set of unmatched nodes in any $[Y_i]_d$ is unaffected (iii) the set of unmatched node in some $[Z_i]_d$ decreases by one. It immediately follows that doing the augmentation procedure on $p$ decreases cost by one.

The proof that augmenting on a cycle does not reduce cost is essentially identical to the proof we have just given for paths: it is an immediate corollary of the observation that the only change in the set of matched nodes comes as a result of the edges $(a_{\rm dst}, b_{\rm dst})$, which has the consequence of changing which node within some $[Y_j]_d$ is matched with no effect on cost. We omit the details.

Finally, supposing that $M$ was an allowed matching, let us show that the matching $M'$ we get after the augmentation procedure is also an allowed matching, under the assumption that $p$ is a cycle or a path that does not begin at a node in $[F]_d$. Indeed, let us consider all the ways in which the augmentation procedure can lead to a node becoming unmatched. As we have remarked earlier, when the augmentation procedure goes through $(a_{\rm dst}, b_{\rm dst})$ then $b_{\rm dst}$ goes from matched-to-unmatched. Also, if $c_{\rm dst}$ is the first core node on a path, then $c_{\rm dst}$ goes from matched-to-unmatched. In no other case is a destination node made unmatched by the augmentation procedure. Since by construction, nodes $b_{\rm dst}$ such that the 
edge $(a_{\rm dst}, b_{\rm dst})$ appears in $p$ are not in $F$, and by assumption nodes $c_{\rm dst}$ which are the first nodes of any path are also not in $F$,  we have that $M'$ is an allowed matching. 
\end{proof}

Along similar lines, the following lemma lists some things that do not happen during the augmentation procedure. 

\begin{lemma} Suppose $M$ is a matching and let $M'$ be the result of performing an iteration of the augmentation algorithm (i.e., of performing the augmentation procedure on a maximal collection of vertex-disjoint shortest paths in $G_{\rm flow}(M)$). Then:
\begin{enumerate} \item If $a_{\rm src}$ has an outgoing edge in $M$, it has an outgoing edge in $M'$. 
\item If some source connected component has a single unmatched node or no unmatched nodes in $M$ (i.e., if it is some $X_i$ or some $Y_i$) then it has at most one unmatched node in $M'$. 
\item  No source connected component goes from having at least one unmatched node in $M$ to having no unmatched nodes in $M'$. 
\end{enumerate} \label{lemma:donot}
\end{lemma}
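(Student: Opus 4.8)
Looking at Lemma~\ref{lemma:donot}, I need to prove three facts about how an iteration of the augmentation algorithm (augmenting along a maximal vertex-disjoint collection of shortest $s$-$t$ paths) behaves with respect to source nodes with outgoing edges and unmatched nodes in source connected components.

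\textbf{Plan of attack.} The key observation is that an iteration of the algorithm augments along a \emph{collection} $\mathcal{P}$ of vertex-disjoint paths, and since the paths are vertex-disjoint, augmenting sequentially along them means that each core node is touched by at most one path in $\mathcal{P}$; moreover each $s_i$, each $n^{(i)}_j$, and $s,t$ itself all have the property that at most one path passes through them (again by vertex-disjointness). I would first reduce each claim to a statement about a single augmenting path $p$, being careful that the interactions between successive augmentations in the sequence are controlled precisely because the paths share no vertices except $s$ and $t$.

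For item (1), I would argue as follows. A source node $a_{\rm src}$ with an outgoing edge in $M$ corresponds to a destination node $a_{\rm src}$ that, in $G_{\rm flow}(M)$, has an incoming core edge (the reversed matching edge). The augmentation removes an outgoing matching edge from $a_{\rm src}$ only if the path enters $a_{\rm src}$ via that reversed core edge; but then, since $a_{\rm src}$ is not an endpoint of the path (the only non-source-non-destination endpoints are $s,t$, and $a_{\rm src}$ is a source node which a path through it must leave), the path must continue out of $a_{\rm src}$, and by construction of $G_{\rm flow}$ the outgoing edges of a source node go to destination nodes — so the augmentation \emph{adds} a new outgoing edge $(a_{\rm src}, d)$ for some destination node $d$. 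Hence $a_{\rm src}$ retains an outgoing matching edge. (I would also note the boundary case: the algorithm's paths do not begin at a source node with an outgoing link in $M$, by the definition of the augmentation procedure's item~(2), so $a_{\rm src}$ cannot be the first core node of a path and lose its edge that way.)

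For items (2) and (3), I would track how the count of unmatched nodes inside a fixed source connected component $S$ changes. From the analysis already carried out in Lemma~\ref{lemma:allowed}: along any augmenting path, exactly one $[X_i]_d$ gains a single unmatched node (at its first destination node), exactly one $[Z_j]_d$ loses one unmatched node (at the last destination node, which lies in $[U'(M)]_d$), and the $[Y_i]_d$'s only have the \emph{identity} of their unique unmatched node shuffled, never the count. Since the paths in $\mathcal{P}$ are vertex-disjoint, at most one path can have its first destination node in a given $S$, so $S$ gains at most one unmatched node per iteration; this yields item (2) directly, since a component starting with $0$ or $1$ unmatched node ends with at most $2$ — wait, I need $1$. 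Here is where I must be more careful: the point is that if $S = X_i$ (no unmatched nodes) and a path starts in $[S]_d$, then $S$ goes to exactly one unmatched node, and \emph{no other path} can touch $[S]_d$ to add a second; and if $S = Y_i$ (one unmatched node), then $[S]_d$ is not an $[X_j]_d$ so no path starts there and adds an unmatched node, so $S$ stays at one unmatched node (its unmatched vertex may move, but the count is preserved). For item (3), a component with $\geq 1$ unmatched node in $M$: the only mechanism that \emph{removes} an unmatched node is that it was the last destination node of a path, i.e. lies in $[U'(M)]_d$, which excludes the $Y_i$ case; and at most one path ends in $[S]_d$, removing at most one unmatched node — so a component with exactly one unmatched node that is a $Y_i$ cannot lose it, and a component with $\geq 2$ unmatched nodes loses at most one and so retains $\geq 1$. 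The main obstacle, and the part requiring the most care, is precisely this bookkeeping when several paths in $\mathcal{P}$ could a priori interact with the same connected component $S$ — I expect to resolve it by invoking vertex-disjointness to show that the "first destination node in $[S]_d$" and "last destination node in $[U'(M)]_d \cap [S]_d$" roles can each be played at most once, and that a $Y_i$ component, not being any $[X_j]_d$ and having its only unmatched node not in $[U'(M)]_d$, is immune to both the gain and loss mechanisms.
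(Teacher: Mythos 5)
Your arguments for items (1) and (2) are correct and essentially track the paper's proof (item (1) uses that the augmentation always replaces the removed outgoing matching edge at an internal source node, and item (2) uses the one-incoming-edge bottleneck at $s_i$ to cap the number of paths starting in any $[X_i]_d$ at one, while $[Y_i]_d$'s never start or end any path).

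However, your argument for item (3) has a genuine gap, centered on the claim that ``at most one path ends in $[S]_d$,'' which you hope to deduce from vertex-disjointness. This is false: if a source strongly connected component $S_i$ has $k_i \geq 3$ unmatched nodes, then $k_i - 1 \geq 2$ vertex-disjoint paths can each terminate at a \emph{distinct} unmatched destination node in $[S_i]_d$ (routing through distinct nodes $n^{(i)}_j$ to reach $t$), so several unmatched nodes in $S_i$ can become matched in a single iteration. Vertex-disjointness of the destination vertices does not restrict you to a single path ending there. The correct mechanism --- and in fact the entire \emph{raison d'\^etre} for introducing the $n^{(i)}_j$ nodes into $G_{\rm flow}$ --- is that a path ending in $[S_i]_d$ must pass through one of the $k_i - 1$ nodes $n^{(i)}_1, \ldots, n^{(i)}_{k_i-1}$, and since these are shared nodes, vertex-disjointness bounds the number of such paths by $k_i - 1$, leaving at least one unmatched node in $S_i$. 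Your proof misses the role of these auxiliary nodes entirely; without them the claim of item (3) would actually be \emph{false}, since a maximal collection of vertex-disjoint shortest paths could exhaust all unmatched nodes of some $S_i$. You need to invoke the $n^{(i)}_j$ bottleneck explicitly rather than asserting ``at most one path ends in $[S]_d$.''
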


\begin{proof} Let ${\cal P}$ be a maximal collection of vertex-disjoint shortest paths from $s$ to $t$ in $G_{\rm flow}(M)$. Item (1) follows because the augmentation on ${\cal P}$ always adds an outgoing edge from each $a_{\rm src}$ whose outgoing edge it removes. Item (2) follows because augmenting on ${\cal P}$ does not change the number of unmatched nodes in any $[Y_i]_d$ and can  increase the number of unmatched nodes in any $[X_i]_d$ by at most $1$. Finally, item (3) follows because, due to the presence of the nodes $n^{(i)}_j$, if the source connected component $S_i$ has $k_i$ unmatched nodes, then at most $k_i-1$ paths in ${\cal P}$ have 
destination nodes in $[S_i]_d$ as their last core nodes. Consequently, $S_i$ retains at least one unmatched node after augmenting on ${\cal P}$. 
\end{proof}

We now need to define some new notions to proceed.  Suppose $M$ and $M'$ are matchings in the splitting of $G$; the graph $\Delta(M,M')$ is a subgraph of the splitting $G_{\rm s}(M)$ obtained the  by taking the symmetric diference of $M$ and $M'$ and then reversing the orientation of every edge of 
$M$. We then have the following lemma.

\begin{lemma} If $M$ and $M'$ are matchings, then each weakly connected component of $\Delta(M,M')$ is either a path or a cycle. Furthermore, no weakly connected component of ${\Delta}(M,M')$ which is a path can begin at a source node which has an outgoing link in $M$ or end at a destination node which is matched in $M$. \label{lemma:deltastructure}
\end{lemma}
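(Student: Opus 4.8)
The plan is to run the classical ``symmetric difference of two matchings is a union of paths and cycles'' argument inside the bipartite splitting of $G$, paying attention to two bookkeeping issues: the $M$-edges have been reversed, and some edges may lie in $M\cap M'$ (hence be invisible in $\Delta(M,M')$ while still occupying a matching slot). Recall that in the splitting every edge of a matching runs from a source node to a destination node, so $\Delta(M,M')$ consists of exactly two kinds of edges: the edges of $M'\setminus M$, oriented source$\to$destination, and the edges of $M\setminus M'$, oriented (after reversal) destination$\to$source. In particular $\Delta(M,M')$ uses only core edges and is bipartite with the same two sides as the splitting.

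First I would bound the degrees. Fix a node $v$ of $\Delta(M,M')$. Since $M$ is a matching, $v$ is incident to at most one edge of $M$, hence at most one edge of $M\setminus M'$; since $M'$ is a matching, $v$ is incident to at most one edge of $M'\setminus M$. So $v$ has undirected degree at most $2$ in $\Delta(M,M')$, and therefore every weakly connected component is a simple path or a simple cycle. Checking orientations, at each degree-$2$ node exactly one incident edge is incoming and one outgoing — at a source node the $M'\setminus M$-edge points away and the reversed $M\setminus M'$-edge points in, and at a destination node it is the reverse — so these are in fact \emph{directed} paths and cycles, which is what we will want when this lemma is applied to the augmentation procedure.

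Next I would examine the two endpoints of a path component $P$, each of degree exactly $1$. Suppose $P$ begins at a source node $a_{\rm src}$ and, toward a contradiction, that $a_{\rm src}$ has an outgoing edge $e\in M$. The unique $P$-edge at $a_{\rm src}$ must be an edge of $M'\setminus M$ pointing out of $a_{\rm src}$ (a reversed $M\setminus M'$-edge would point in, making $a_{\rm src}$ an endpoint of the other type). If $e\in M\setminus M'$, then $e$ appears reversed in $\Delta(M,M')$ as an edge into $a_{\rm src}$, giving degree $\ge 2$ — contradiction. If $e\in M\cap M'$, then $e$ is the unique $M'$-edge leaving $a_{\rm src}$, so it coincides with the $M'\setminus M$-edge of $P$ at $a_{\rm src}$; but that edge is not in $M$, contradicting $e\in M$. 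Hence $a_{\rm src}$ has no outgoing $M$-edge. The claim that $P$ cannot end at a destination node $d_{\rm dst}$ matched in $M$ is symmetric: the unique $P$-edge at an ending destination node is an $M'\setminus M$-edge into $d_{\rm dst}$, and an $M$-edge into $d_{\rm dst}$ is either in $M\setminus M'$ (reversed to an outgoing edge, degree $\ge 2$) or in $M\cap M'$ (forced to coincide with the incoming $M'\setminus M$-edge, again a contradiction). Cycles, and path endpoints of the remaining types (a path beginning at a destination node or ending at a source node), carry no restriction, so nothing more is needed.

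I do not expect a real obstacle; the lemma is essentially the standard structural fact, and the only subtlety is the $M\cap M'$ case — such an edge never shows up in $\Delta(M,M')$ yet still uses the matching slot of its endpoints, and the two-way split ($M\setminus M'$ versus $M\cap M'$) in the endpoint analysis above is exactly what accounts for it.
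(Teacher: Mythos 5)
Your proof is correct and takes essentially the same route as the paper: the classical symmetric-difference argument giving degree bounds from the two matchings (the paper phrases it as in-degree $\le 1$ and out-degree $\le 1$, you as total degree $\le 2$ plus an orientation check), followed by the endpoint analysis that rules out a start at a source node matched in $M$ or an end at a destination node matched in $M$. The only stylistic difference is that you spell out the $M\setminus M'$ versus $M\cap M'$ case split at the path endpoints, which the paper compresses into the observation that the outgoing edge at such a node "was the same in $M$ and $M'$," and you also make explicit that the components are directed paths and cycles.
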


\begin{proof}  We first argue that for any node $a$, there cannot be two edges in ${\Delta}(M,M')$ incoming on $a$ and there cannot be two edges in $\Delta(M,M')$ outgoing from $a$. 

Indeed, suppose $a$ is a source node with two (or more) out-going edges; then both outgoing edges would have to lie in $M'$, since all outgoing edges from source nodes in $\Delta(M,M')$ come from $M'$. This would contradict the fact that $M'$ is a matching. If $a$ is a source node with two incoming edges, both of them would have to come from $M$, which contradicts the fact that $M$ is a matching. The case when $a$ is a destination node is similar. This proves that each weakly connected component of $\Delta(M,M')$ is a path or a cycle. 

To prove the second part, suppose that $p$ is a path weakly connected component of $\Delta(M,M')$ which begins at source node $a$. If $a$ has an outgoing link in $M$, then because no incoming edges on $a$ are in $p$, we can conclude that the outgoing edge from $a$ was the same in $M$ and $M'$. But this means that $a$ has no incoming or outgoing edges in $\Delta(M,M')$ and thus cannot be the start of a path. A similar argument shows the last node of a path cannot be a matched destination node.  \end{proof}

Since all the weakly connected components of $\Delta(M,M')$ are by definition vertex-disjoint, the previous lemma implies we can perform the 
augmentation procedure on them. We then immediately have the following corollary. 

\begin{corollary} Suppose $M$ and $M'$ are matchings in the splitting of $G$ and consider performing the augmentation procedure on $\Delta(M,M')$. The final result is $M'$. \label{cor:ma}
\end{corollary}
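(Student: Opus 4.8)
The plan is a direct set-theoretic computation, obtained by unwinding the definition of $\Delta(M,M')$ together with the definition of the augmentation procedure. First I would make the edge set of $\Delta(M,M')$ fully explicit. Since $\Delta(M,M')$ is the symmetric difference $M \triangle M'$ with the orientation of every edge of $M$ reversed, it contains no edge of $M \cap M'$; its source-to-destination edges are exactly the edges of $M' \setminus M$ (kept in their original orientation, since they are not in $M$); and its destination-to-source edges are exactly the reverses of the edges of $M \setminus M'$. Consequently, when the augmentation procedure is applied to a path or cycle $C$ contained in $\Delta(M,M')$, the clause ``remove from $M$ the reverse of every edge of $C$ going from a destination node to a source node'' amounts to removing the edges of $M \setminus M'$ that lie in $C$, and the clause ``add every edge of $C$ going from a source node to a destination node'' amounts to adding the edges of $M' \setminus M$ that lie in $C$.

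Next I would invoke Lemma \ref{lemma:deltastructure}: the weakly connected components $C_1, \ldots, C_q$ of $\Delta(M,M')$ are paths or cycles that satisfy the hypotheses required for the augmentation procedure, and they are pairwise vertex-disjoint. Because of this vertex-disjointness, the augmentations on the $C_j$ do not interfere with one another: augmenting on $C_j$ only deletes edges of $M$ incident to $V(C_j)$ and only inserts edges of $M'$ incident to $V(C_j)$, and for distinct $j$ these edge sets are disjoint and are left untouched by the other augmentations. Hence the order in which we augment is irrelevant, and after performing all $q$ augmentations we have deleted from $M$ exactly $\bigcup_j \bigl((M \setminus M') \cap C_j\bigr) = M \setminus M'$ and inserted exactly $\bigcup_j \bigl((M' \setminus M) \cap C_j\bigr) = M' \setminus M$. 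The resulting edge set is therefore $\bigl(M \setminus (M \setminus M')\bigr) \cup (M' \setminus M) = (M \cap M') \cup (M' \setminus M) = M'$, which is the claim.

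The only point that needs a little care — and the nearest thing to an \emph{obstacle} — is the bookkeeping around orientations. An edge of $M$ appears reversed in $G_{\rm s}(M)$ and hence also in $\Delta(M,M')$, so one must check that the ``remove the reverse of'' phrasing in the augmentation procedure restores such an edge to its original orientation, making the net effect on $M \setminus M'$ a plain deletion (not a double reversal) and the net effect on $M' \setminus M$ a plain insertion. Once the orientations are pinned down and the vertex-disjointness of the components is used to rule out interference, the corollary is immediate from the identity $M \triangle (M \triangle M') = M'$.
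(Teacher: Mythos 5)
Your proposal is correct and takes essentially the same approach as the paper's (very terse, one-sentence) proof: the augmentation on $\Delta(M,M')$ removes exactly the edges of $M \setminus M'$ and inserts exactly the edges of $M' \setminus M$, yielding $M'$. You have simply spelled out the orientation bookkeeping and the non-interference of vertex-disjoint components that the paper leaves implicit.
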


\begin{proof} Indeed, doing the augmentation procedure on $\Delta(M,M')$ removes all the edges which are in
$M$ but not in $M'$ and adds all the edges in $M'$ not in $M$.
\end{proof}

We will also require the following lemma, which tells us something about the structure of $\Delta(M,M')$ when both $M$ and $M'$ are  allowed matchings. 

\begin{lemma} Let $M$ and $M'$ be allowed matching. Then no weakly connected component of $\Delta(M,M')$ is a path beginning at a node in
$[F]_d$.  \label{lemma:fsymmdiff}
\end{lemma}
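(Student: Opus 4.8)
The plan is to argue by contradiction, and the proof is essentially a bookkeeping exercise about edge orientations in $\Delta(M,M')$. Suppose some weakly connected component $p$ of $\Delta(M,M')$ is a path that begins at a node $w \in [F]_d$. Since $[F]_d$ consists only of destination nodes, $w$ is a destination node. From the proof of Lemma~\ref{lemma:deltastructure} we already know that every node of $\Delta(M,M')$ has in-degree at most one and out-degree at most one; since $w$ is the first node of a path, $w$ has out-degree exactly one and in-degree zero in $\Delta(M,M')$.

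Next I would pin down what kind of edge can leave $w$. By definition $\Delta(M,M')$ is obtained from the symmetric difference of $M$ and $M'$ by reversing the edges of $M$, so edges of $M'$ still point from a source node to a destination node, while edges of $M$ point from a destination node to a source node. Hence the unique edge leaving the destination node $w$ is a reversed edge of $M$: there is some $(u_{\rm src}, w)\in M$, and this edge lies in the symmetric difference, so $(u_{\rm src},w)\in M\setminus M'$. In particular $w$ is matched in $M$.

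Then I would invoke the hypothesis that $M'$ is an allowed matching: since $w\in[F]_d$, the node $w$ is matched in $M'$, say by an edge $(u'_{\rm src},w)\in M'$. This edge cannot also belong to $M$, for if it did then, $M$ being a matching and $(u_{\rm src},w)\in M$, we would get $u'=u$ and hence $(u_{\rm src},w)\in M'$, contradicting $(u_{\rm src},w)\notin M'$. Therefore $(u'_{\rm src},w)\in M'\setminus M$ lies in the symmetric difference, and being an edge of $M'$ it appears in $\Delta(M,M')$ un-reversed, i.e.\ as an edge incoming on $w$. This contradicts the fact that $w$ has in-degree zero, which completes the proof.

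I do not expect a genuine obstacle here; the only point requiring care is the orientation convention (edges of $M$ are reversed, edges of $M'$ kept) together with the observation that $[F]_d$ contains no source nodes, so ``path beginning at a node in $[F]_d$'' forces the path to start at a destination node. As an alternative even shorter route, reversing every edge of a path component of $\Delta(M,M')$ produces a path component of $\Delta(M',M)$ with the two endpoints exchanged; Lemma~\ref{lemma:deltastructure} applied to $\Delta(M',M)$ then says such a path cannot end at a destination node matched in $M'$, whereas $w\in[F]_d$ is matched in $M'$ because $M'$ is allowed — again a contradiction.
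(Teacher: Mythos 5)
Your proof is correct and takes essentially the same approach as the paper. The paper starts from the observation that the starting node has in-degree zero and splits into the two cases ``$f_{\rm dst}$ unmatched in $M'$'' versus ``the $M'$-edge coincides with the $M$-edge,'' ruling each out; you instead start from out-degree one, pin down the outgoing edge as a reversed $M$-edge, and then use allowedness of $M'$ to produce a distinct incoming $M'$-edge contradicting in-degree zero. This is the same bookkeeping on the symmetric difference, just entered from the other end.
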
 

\begin{proof} Suppose otherwise: there is some $f \in F$ such that $\Delta(M,M')$ has a path beginning at $f_{\rm dst}$. This means
$f_{\rm dst}$ has no incoming edge in $\Delta(M,M')$. This implies that either of the the following two possibilities holds: 
(i) $f_{\rm dst}$ has no incoming edge in $M'$ (ii) $f_{\rm dst}$ has an incoming edge in $M'$, and it is the same edge as its incoming edge in $M$. Each of these two possibilities leads to a contradiction: (i) contracts $M'$ being an allowed matching and (ii) implies that $f_{\rm dst}$ has no outgoing edges in $\Delta(M,M')$ and thus cannot be the start of a path.  
\end{proof}

We are now ready to complete a key step in the in the proof of Theorem \ref{thm}. The rather involved construction of $G_{\rm flow}$ as well as all the previous definitions have been written in order that the following lemma is holds. 

\begin{lemma} Let $M$ be an allowed matching in $G$. Recall that $c(M)$ denotes the cost of $M$ and let $c^*$ be the minimum cost of any allowed matching in $G$. We then have that there exist $c(M)-c^*$ vertex-disjoint paths from $s$ to $t$ in $G_{\rm flow}(M)$.  \label{lemma:pathscount}
\end{lemma}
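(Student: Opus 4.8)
The plan is to prove Lemma \ref{lemma:pathscount} by exhibiting an explicit collection of vertex-disjoint paths from $s$ to $t$ in $G_{\rm flow}(M)$ whose number equals $c(M) - c^*$. The natural source of such paths is the comparison between $M$ and a minimum-cost allowed matching: let $M^*$ be an allowed matching with $c(M^*) = c^*$, and consider $\Delta(M,M^*)$. By Lemma \ref{lemma:deltastructure} its weakly connected components are paths and cycles, and by Lemma \ref{lemma:fsymmdiff} no path component begins at a node in $[F]_d$. By Corollary \ref{cor:ma}, augmenting $M$ on all of $\Delta(M,M^*)$ yields $M^*$. First I would invoke Lemma \ref{lemma:allowed}: augmenting on a cycle leaves the cost unchanged, and augmenting on a path changes the cost by at most one (decreasing by one if the path is ``of the right type'', i.e.\ runs from an unmatched-source-type start to a $[U'(M)]_d$-type end). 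Since the total change in cost across all the components of $\Delta(M,M^*)$ is exactly $c^* - c(M) = -(c(M)-c^*)$, at least $c(M)-c^*$ of the path components must be cost-decreasing paths of this type (more precisely, the number of cost-decreasing components minus the number of cost-increasing ones equals $c(M)-c^*$; I would need to check that the construction of $G_{\rm flow}$ precludes cost-increasing augmentations along $\Delta$-components, or else argue more carefully).

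The heart of the argument is then to show that each such cost-decreasing path component $p$ of $\Delta(M,M^*)$ can be ``completed'' to an honest $s$-$t$ path in $G_{\rm flow}(M)$, and that these completions can be made vertex-disjoint. The path $p$ lives in $G_{\rm s}(M)$ and, by the cost analysis inside the proof of Lemma \ref{lemma:allowed}, its first core node lies in some $[X_i]_d$ (a source SCC with no unmatched node in $M$) or it begins at an unmatched source node, while its last core node lies in $[U'(M)]_d$ — i.e.\ is either an unmatched destination node not in any $[Y_j]_d$, or an unmatched destination node in some $[Z_j]_d$ with at least two unmatched nodes. I would handle the two ends separately. At the start: if $p$ begins at an unmatched source node with no outgoing edge in $M$, then $G_{\rm flow}(M)$ has an edge from $s$ directly to it; if $p$'s first core node is in $[X_i]_d \cap F^c$, then $G_{\rm flow}(M)$ has the edges $s \to s_i \to (\text{that node})$. (One must check the node is in $F^c$; but a matched node of $F$ cannot be the start of a $\Delta$-path by Lemma \ref{lemma:fsymmdiff}, and — crucially — I would need to argue the first core destination node of such a path is genuinely unmatched in $M$, so it lies in $[X_i]_d$, hence is a legitimate target of an $s_i$-edge which only goes to $[X_i]_d \cap F^c$; the $F^c$ condition should follow since if that node were in $F$ and unmatched, $M$ would not be allowed.) At the end: if $p$'s last core node is an unmatched destination node not in a source SCC, $G_{\rm flow}(M)$ has a direct edge from it to $t$; if it lies in a source SCC $S_j$ with $k_j \geq 2$ unmatched nodes, it connects to $t$ via one of the auxiliary nodes $n^{(j)}_1,\dots,n^{(j)}_{k_j-1}$.

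The vertex-disjointness is where the careful bookkeeping pays off. The $\Delta$-path components are pairwise vertex-disjoint by construction, so the only possible collisions are among the appended head-edges (through $s$, the $s_i$) and tail-edges (through $t$, the $n^{(j)}_\ell$). The $s_i$ are indexed by distinct source SCCs $X_i$, so two $\Delta$-paths whose first core nodes lie in the same $X_i$ would both want to route through $s_i$; I must rule this out or resolve it. I expect this to be the main obstacle: I would argue that at most one cost-decreasing $\Delta$-path can have its first core node in a given $[X_i]_d$, because augmenting creates exactly one unmatched node in $X_i$, so if two components each ``used up'' $X_i$ the cost accounting in $M^*$ would be inconsistent (and similarly $X_i$ has no unmatched node in $M$, so only one of its destination nodes can be the start of a cost-changing augmenting path). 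For the tail side, the auxiliary nodes $n^{(j)}_1,\dots,n^{(j)}_{k_j-1}$ were introduced precisely so that up to $k_j - 1$ disjoint paths can end in $[S_j]_d$ and still route disjointly to $t$; and since $S_j$ has $k_j$ unmatched nodes in $M$ but $M^*$ must retain at least one unmatched node in $S_j$ (it is still a source SCC), at most $k_j - 1$ of the $\Delta$-path tails land in $[S_j]_d$ — exactly matching the number of available $n^{(j)}_\ell$. Assembling: each cost-decreasing $\Delta$-path, with its disjointly-chosen head and tail, is an $s$-$t$ path in $G_{\rm flow}(M)$; there are at least $c(M) - c^*$ of them; and the disjointness analysis above shows they can be taken vertex-disjoint. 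This gives the claimed $c(M) - c^*$ vertex-disjoint $s$-$t$ paths.
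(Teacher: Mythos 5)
Your high-level strategy matches the paper's: take a minimum-cost allowed matching $M^*$, decompose $\Delta(M,M^*)$ into paths and cycles, and extend the cost-decreasing paths to vertex-disjoint $s$--$t$ paths in $G_{\rm flow}(M)$ using the auxiliary nodes $s$, $s_i$, $n^{(i)}_j$, $t$. Your head-edge and tail-edge analysis (including the use of Lemma~\ref{lemma:fsymmdiff} to get $F^c$, the at-most-one-path-per-$[X_i]_d$ argument, and the counting of available $n^{(j)}_\ell$) is also essentially what the paper does.

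However, there is a genuine gap, and it is precisely at the step you flag but do not resolve: the assertion that every cost-decreasing path component of $\Delta(M,M^*)$ has its last core node in $[U'(M)]_d$. This is not true for the raw components of $\Delta(M,M^*)$. A component $p$ can end at the unique unmatched destination node $d_0$ of some $[Y_j]_d$, while another component $p'$ \emph{starts} at a matched destination node of the same $[Y_j]_d$; in a sequential augmentation, $p'$ first creates a second unmatched node in $Y_j$, after which augmenting on $p$ genuinely decreases cost. Such a $p$ cannot be extended to an $s$--$t$ path in $G_{\rm flow}(M)$, because $d_0$'s only outgoing edges are the non-core edges to $[Y_j]_d \cap F^c$ --- there is no edge from $d_0$ to $t$ and no $n^{(i)}_j$ attached to $Y_j$ (which has only one unmatched node). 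The paper resolves this by a ``clumping'' step before anything else: whenever a component ends at the unmatched node of $[Y_j]_d$ and another begins in $[Y_j]_d$, they are merged into a single path by inserting the non-core edge from the unmatched node of $[Y_j]_d$ to the start of the other component (an edge that exists in $G_{\rm flow}(M)$ by construction). After clumping, the paper uses exactly this structure to \emph{prove} that the last core node of each cost-decreasing clumped path is not in any $[Y_j]_d$, which is what lets the tail-edge routing go through. Without clumping, your claim about $[U'(M)]_d$ is unproved and can fail.

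A secondary but related issue: your counting of cost-decreasing components treats the cost change of a $\Delta$-path as an attribute of the path alone, but the cost is not additive across disjoint augmentations (whether a source SCC contributes to the ``no unmatched node'' count depends on the cumulative state). The paper works sequentially: order the clumped components arbitrarily and count those that decrease cost at the moment they are augmented. Since each step changes cost by at most one and the final total decrease is exactly $c(M)-c^*$, at least $c(M)-c^*$ of them decrease cost. This sequential framing is also what makes the $[Y_j]_d$ argument go through (via the clumping), so the two gaps are tied together. To repair your proof, you would need to (i) introduce the clumping step, (ii) argue sequentially, and (iii) then your routing arguments for heads and tails carry over largely as you wrote them.
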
 

\begin{proof}  Let $M^*$ be an allowed matching in $G$ of cost $c^*$ and let 
\[  p_1, \ldots, p_{h'} \] be a listing of the paths in $\Delta(M, M^*)$.  Let us greedily ``clump'' these paths as follows: if some path $p_i$ has a last node $a_{\rm dst}$ in some $[Y_i]_d$ and another path $p_j$ has a first node $b_{\rm dst}$ in the same $[Y_i]_d$, we merge them into the same path by inserting the non-core edge $(a_{\rm dst}, b_{\rm dst})$. Note that this edge $(a_{\rm dst},b_{\rm dst})$ exists since, first by Lemma \ref{lemma:deltastructure}, the node $a_{\rm dst}$ is unmatched; and by Lemma \ref{lemma:fsymmdiff} we have that $b_{\rm dst} \notin F$;  and finally, by construction, $G_{\rm flow}(M)$ has edges leading from the single unmatched node in each $[Y_i]_d$ to all nodes in $[Y_i]_d \cap F^c$. 

 We continue greedily clumping these paths in this way until we cannot clump any more. Let us call the result \[ q_1, \ldots, q_{h''} \] Observe that performing the augmentation procedure on
$q_1, \ldots, q_{h''}$ is the same as performing the augmentation procedure on $p_1, \ldots, p_{h'}$ since the augmentation procedure automatically ignores the extra non-core edges we inserted. 

Let us observe several key properties of the paths $q_1, \ldots, q_{h''}$. First, they are naturally vertex disjoint. Secondly, if some $q_i$ has a final node in some $[Y_j]_d$, no other $q_i$ has a 
starting node in the same $[Y_j]_d$ (because else they would have been clumped together). Similarly, if some $q_i$ begins in
some $[Y_j]_d$, then no other $q_i$ has last node in the same $[Y_j]_d$.

Now since augmenting on a cycle does not change the cost of a matching (by Lemma \ref{lemma:allowed}), we have by Corollary \ref{cor:ma} that the augmentation procedure on $q_1, \ldots, q_{h''}$ decreases the cost of the matching by $c(M)-c^*$. Order $q_1, \ldots, q_{h''}$ arbitrarily and let $\widehat{q}_1, \widehat{q}_2, \ldots$ be a listing of all the $q_i$ which result in cost decreases when doing the augmenting procedure on them sequentially. Since augmenting on any path can decrease cost by at most $1$, we have that there are at least $c(M)-c^*$ paths $\widehat{q}_i$ in this list. 

 We next argue that at least half of these paths $\widehat{q}_i$ can be extended to a path from $s$ to $t$ by appending at the beginning of the path either the edge 
$(s, a_{\rm src})$ for some source node $a_{\rm src}$ or the edges $(s,s_i), (s_i, x_{\rm dst})$ for some $x_i$ and some $x_{\rm dst}$ belonging to some $[X_i]_d$; and appending an edge either going directly to $t$ or going to some $n^{(k)}_j$ and then to $t$ at the end of the path. Furthermore, we will argue that appending these edges keeps the 
paths $\widehat{q}_i$ vertex disjoint. Once these claims are shown, the proof of the current lemma will be complete.

Indeed, by Lemma \ref{lemma:deltastructure} the paths $\widehat{q}_i$ must begin at a source node with no outgoing links in $M$ or a matched destination node in $M$. In the first case, we can append a link going from $s$ to the first node of the path. In the second case, we further have that first node of the path belongs to some $[X_j]_d$ - else, augmenting on the path would not result in a
cost decrease. In this case, we can append a link going from $s$ to $s_i$, and $s_i$ to the matched destination node which is the first on the path; this is possible because Lemma \ref{lemma:fsymmdiff} implies that the first node on the path is in $F^c$, and by construction $s_i$ has outgoing links to every node in $[X_j]_d \cap F^c$. 

Furthermore, we argue that this addition of initial edges to all the $\widehat{q}_i$ keeps them vertex disjoint. Indeed, this is immediate in the case when the first node of
$\widehat{q}_i$ is a source node. In the case when the first node of $\widehat{q}_i$ is a matched destination node, we observe that we cannot have two of the paths $\widehat{q}_k$ beginning at matched destination nodes in the same $[X_j]_d$, since augmenting on one of them after having augmented on the other could not decrease cost. Consequently, adding the links $(s,s_i)$ and from $s_i$ to the first node of the path $\widehat{q}_i$ keeps 
the paths vertex disjoint.

As to the end of these paths, recall that by Lemma \ref{lemma:deltastructure}  each path $\widehat{q}_i$ must end at either a source node or an unmatched destination node in $M$. Since augmentation on any path ending at a source node cannot decrease cost, it must be the latter. Consider the last node of the path, which we will call $d_i$. We argue it does not lie in some $[Y_j]_d$. Indeed, after we have augmented on $\widehat{q}_1, \ldots, \widehat{q}_{i-1}$, we trivially have that $d_i$ does not lie in a source connected component with one unmatched destination node because the augmentation on $\widehat{q}_i$ decreases cost. 
We need to argue this holds initially, before the augmentations on $\widehat{q}_1, \ldots, \widehat{q}_{i-1}$ are done. 

Indeed, if $d_i$ did lie in some $[Y_j]_d$, we claim  that the destination nodes comprising $[Y_j]_d$ would have remained with a single unmatched destination node after we have augmented on $\widehat{q}_1, \ldots, \widehat{q}_{i-1}$. This is because the node $d_i$ was initially unmatched  and we know that there have been no paths in $\widehat{q}_1, \ldots, \widehat{q}_{i-1}$ whose starting points were in $[Y_j]_d$: any such path would have been clumped with $\widehat{q}_i$. Thus, under the assumption that $d_i$ lies in some $[Y_j]_d$, we have that no node in $[Y_j]_d$ has become unmatched after augmentation on $\widehat{q}_1, \ldots, \widehat{q}_{i-1}$, while the only unmatched node has remained unmatched. This proves that if $d_i$ lies in some $[Y_j]_d$ initially, then it still lies in a source strongly connected component with a single unmatched node by the time we augment on $\widehat{q}_1, \ldots, \widehat{q}_{i-1}$. Since we know that augmenting on $\widehat{q}_i$ reduces cost, we therefore have that $d_i$ lies in some $[Z_j]_d$. 

Now if $d_i$ does not belong to the set of destination nodes of a source connected component, we append the edge $(d_i,t)$
and we are done. If $d_i$ does lie in a source connected component, then we append a link going from $d_i$ to any $n^{(k)}_j$ out-neighbor which does not already have an incoming link from doing the appending to  $\widehat{q}_1, \ldots, \widehat{q}_{i-1}$. 

We finally need argue such a neighbor $n^{(k)}_j$ will always be available. Indeed, if a source connected component $S_j$ has $k_j$ unmatched nodes, then there will be at most $k_j-1$ paths $\widehat{q}_i$ with a destination node in $[S_j]_d$: otherwise augmenting on the $k_j$'th such $\widehat{q}_i$ would not result in a cost decrease. Since there are $k_j-1$ nodes $n^{(k)}_j$ with incoming links from this
connected component, one will always be available. \end{proof}

\bigskip

\medskip

As mentioned, the previous lemma is a key step in the proof of Theorem \ref{thm}. It tells us that, as long as we are not at the minimal cost allowed matching, we will be able to perform the augmentation procedure on a path from $s$ to $t$ in $G_{\rm flow}$.
Note that since the augmentation procedure decreases cost by $1$, and since the cost of any allowed matching is $O(n)$, then the previous lemma immediately implies that the minimum cost allowed matching will be found after $O(n)$ rounds of the augmentation procedure. By Proposition \ref{prop:opcount} and Proposition \ref{prop:allexists}, this immediately implies the minimum cost allowed matching can be found in $O(mn)$ operations (provided the graph has no isolated nodes; recall that we did $O(m+n)$ pre-processing to ensure this).

 Our goal in the remainder of the paper is improve this to $O(m \sqrt{n})$.  This is possible because Lemma \ref{lemma:pathscount} tells us that the number of shortest paths from $s$ to $t$ in $G_{\rm flow}(M)$ is an upper bound on the distance to optimality $c(M)-c^*$. If we can argue that the number of paths from $s$ to $t$ decreases fast as a result of the augmentation procedure, improved bounds will be obtained. 

Our strategy, inspired, of course, by the Hopcroft-Karp algorithm, will be to argue that the distance from $s$ to $t$ increases as a result of the augmentation procedure. Since all the paths from $s$ to $t$ are vertex disjoint, lower bounds on the distance from $s$ to $t$ immediately imply upper bounds on the number of shortest paths from $s$ to $t$.

\bigskip

\bigskip

Note that, due to the way of $G_{\rm flow}$ is constructed, performing the augmentation procedure on a collection of paths is the same as reversing the direction of all the core edges in these paths (as then making the necessary modifications to links involving $s,t$ and the $s_i, n^{(i)}_j$ stemming from the changes in which nodes are matched). Consequently, we begin with a sequence of lemmas about the effect of reversing edges on distances in directed graphs.

 Recall that a source node in a directed graph is a graph with no incoming links while a sink node in a directed graph is a graph with no outgoing links. Since we will need to compare distances in different graphs, we adopt the notation $d(a,b,G)$ to denote the distance from node $a$ to node $b$ in the directed graph $G$. Given a directed graph $G$ and nodes $s,t$, we define $S(s,t,G)$ to denote the set of arcs which lie on a shortest paths from $s$ to $t$ in $G$.

\begin{lemma} Consider a directed graph $\Go$, let $S'$ be any subset of $S(s,t,\Go)$ and suppose we reverse the orientation of every arc in $S'$. Call the resulting graph $\Gs$. Then, for any node $q$ we have that $d(s,q, \Go) \leq d(s,q, \Gs)$. \label{lemma:rev:nonincr}
\end{lemma}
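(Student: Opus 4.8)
The plan is to exploit the layered (BFS) structure that shortest $s$--$t$ paths impose on $\Go$. The one ingredient I would establish first is the standard fact that any arc on a shortest $s$--$t$ path moves from one distance layer to the next: if $(u,v)\in S(s,t,\Go)$ then $d(s,u,\Go)+1+d(v,t,\Go)=d(s,t,\Go)$, and combining this with $d(s,v,\Go)\le d(s,u,\Go)+1$ and the triangle inequality $d(s,t,\Go)\le d(s,v,\Go)+d(v,t,\Go)$ forces $d(s,v,\Go)=d(s,u,\Go)+1$. In particular, the reversal $(v,u)$ of such an arc is what appears in $\Gs$, and traversing it \emph{from $v$ to $u$} \emph{decreases} the $\Go$-distance from $s$ by exactly one.

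Given this, I would fix an arbitrary node $q$. If $q$ is unreachable from $s$ in $\Gs$ then $d(s,q,\Gs)=\infty$ and there is nothing to prove, so assume it is reachable and pick a shortest path $P=(s=q_0,q_1,\dots,q_\ell=q)$ in $\Gs$, where $\ell=d(s,q,\Gs)$. I would then show by induction on $i$ that $d(s,q_i,\Go)\le i$. The base case $d(s,s,\Go)=0$ is immediate. For the step, the arc $(q_{i-1},q_i)$ of $\Gs$ is of one of two kinds. Either it is an arc of $\Go$ that was not reversed, in which case $d(s,q_i,\Go)\le d(s,q_{i-1},\Go)+1\le i$; or it is the reversal of an arc $(q_i,q_{i-1})\in S'\subseteq S(s,t,\Go)$, in which case the preliminary fact gives $d(s,q_i,\Go)=d(s,q_{i-1},\Go)-1\le i-1\le i$. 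Either way $d(s,q_i,\Go)\le i$, and taking $i=\ell$ yields $d(s,q,\Go)\le \ell=d(s,q,\Gs)$, which is the claim.

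I do not expect a genuine obstacle here: this is essentially the classical monotonicity argument underlying the Hopcroft--Karp analysis. The two points that need a little care are (i) phrasing the induction hypothesis as the absolute bound $d(s,q_i,\Go)\le i$ rather than as a step-by-step comparison of consecutive distances, which sidesteps any worry about vertices of $P$ being unreachable from $s$ in $\Go$; and (ii) keeping straight that an arc $a=(u,v)\in S'$ becomes $(v,u)$ in $\Gs$, so that it is precisely the shortest-path property of $a$ which guarantees that moving backwards along it in $\Gs$ does not increase (in fact strictly decreases) the $\Go$-distance from $s$. Reversal could in principle create parallel arcs, but since the entire argument speaks only about distances this causes no difficulty.
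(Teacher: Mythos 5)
Your proof is correct and follows essentially the same inductive argument as the paper's: both induct layer by layer along a shortest path in $\Gs$, splitting into the two cases of an original arc versus a reversed shortest-path arc. You make explicit as a preliminary fact the BFS-layer property of shortest-path arcs, which the paper uses implicitly, but the substance is identical.
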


\begin{proof} We argue by induction on $d(s,q, \Gs)$. Indeed, nodes $q$ which have $d(s,q, \Gs) = 1$ must have a link from $s$ to $q$ in $\Gs$. This link must be in $\Go$, since no link incoming to $s$ was reversed since no link incoming to $s$ could be on a shortest path starting at $s$. Thus we have $d(s,q, \Go) = 1$.  

Suppose now we have established the proposition for all nodes $q$ with $d(s,q,\Gs) \leq k$. Consider a node $q'$ with $d(s,q',\Gs) = k+1$. Let $p$ be the predecessor of $q'$ on any shortest path from $s$ to $q'$ in $\Gs$. We have $d(s,p, \Gs) = k$ and consequently by the inductive hypothesis, we know that $d(s, p, \Go) \leq k$. We now consider two cases.

In the first case, we have that the link $(p,q')$ is present in $\Go$. In this case, $d(s,q',\Go) \leq k+1$ and we are finished. 

In the second case, we have that the link $(p,q')$ is not present in $\Go$. This means the link $(q',p)$ was present in $\Go$ and was reversed by the procedure in the lemma. Thus $(q',p)$ is on a shortest path starting from $s$ in $\Go$. This means $d(s,q',\Go) \leq d(s,p,\Go) \leq k < k+1$, and we are finished. 
\end{proof}

The previous lemma says that reversing edge directions on a collection of shortest paths cannot decrease distance. The next lemma uses this to strengthen the
conclusion further, namely that this procedure will
strictly increase distances for some nodes.

\begin{lemma}Consider a directed graph $\Go$, let $S'$ be any subset of $S(s,t,\Go)$ and suppose we reverse the orientation of every arc in $S'$. Call the resulting graph $\Gs$.  

Suppose further that a shortest path from $s$ to $q$ in $\Gs$ has the last edge $(p,q)$ which was reversed. Then we have $d(s,q, \Go) < d(s,q, \Gs)$. \label{lemma:rev:firstincr}
\end{lemma}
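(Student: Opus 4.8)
The plan is to leverage Lemma~\ref{lemma:rev:nonincr}, which already tells us $d(s,q,\Go) \le d(s,q,\Gs)$, and to rule out the possibility of equality. Suppose, for contradiction, that $d(s,q,\Go) = d(s,q,\Gs) =: D$. Let $(p,q)$ be the last edge on a shortest $s$-to-$q$ path in $\Gs$, and assume this edge was reversed by the procedure; that is, in $\Go$ the arc $(q,p)$ was present and lay on some shortest path from $s$ in $\Go$, which in particular means $d(s,p,\Go) = d(s,q,\Go) + 1 = D+1$. On the other hand, since $(p,q)$ is the last edge of a shortest path from $s$ to $q$ in $\Gs$, we have $d(s,p,\Gs) = D - 1$.

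Now I apply Lemma~\ref{lemma:rev:nonincr} to the node $p$: it gives $d(s,p,\Go) \le d(s,p,\Gs)$, i.e. $D+1 \le D-1$, which is a contradiction. Hence equality is impossible and $d(s,q,\Go) < d(s,q,\Gs)$, as claimed.

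The only point requiring a little care — and the step I expect to be the ``main obstacle'', though it is really a bookkeeping matter rather than a genuine difficulty — is justifying the identity $d(s,p,\Go) = d(s,q,\Go)+1$. This follows because the arc $(q,p)$ in $\Go$ was reversed, and arcs are reversed only when they lie in $S(s,t,\Go)$, i.e. on a shortest $s$-to-$t$ path in $\Go$; any arc $(q,p)$ on such a shortest path satisfies $d(s,p,\Go) = d(s,q,\Go) + 1$ by the standard layered structure of shortest paths in a directed graph (every edge on a shortest path advances the distance-from-$s$ label by exactly one). Combining this with the inductive-style inequality from Lemma~\ref{lemma:rev:nonincr} applied at $p$ closes the argument; no separate induction is needed here since Lemma~\ref{lemma:rev:nonincr} is already available in full strength.
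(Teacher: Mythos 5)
Your proposal is correct and uses exactly the same three observations as the paper's proof: $d(s,q,\Gs) = d(s,p,\Gs)+1$ because $(p,q)$ is the last edge of a shortest path in $\Gs$; $d(s,p,\Gs) \ge d(s,p,\Go)$ by Lemma~\ref{lemma:rev:nonincr}; and $d(s,p,\Go) = d(s,q,\Go)+1$ because the reversed arc $(q,p)$ lay on a shortest $s$-to-$t$ path in $\Go$. The only cosmetic difference is that you package the argument as a proof by contradiction, whereas the paper chains the inequalities directly to get the slightly sharper $d(s,q,\Gs) \ge d(s,q,\Go)+2$.
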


\begin{proof} The proof is essentially a more careful examination of the proof of Lemma \ref{lemma:rev:nonincr} just given. Note that
\[ d(s,q, \Gs) =  d(s,p,\Gs)+1 \geq d(s,p, \Go)+1, \] where we used Lemma \ref{lemma:rev:nonincr} for the last inequality. Now since  $(q,p)$ was on a shortest path from $s$ to $t$ in $\Go$, we have 
\[ d(s,p,\Go) = d(s,q,\Go)+1 \] so putting the last two inequalities together,
\[ d(s,q, \Gs) \geq d(s,q,\Go) + 2. \]
\end{proof}

As a consequence of these two lemmas, we can conclude that the distance from $s$ to $t$ strictly increases if we reverse enough arcs. This is formally proved in the following lemma. We will say that a set $S' \subset S(s,t,G)$ is {\em saturating} if every shortest path from $s$ to $t$ in $G$ has at least one arc in $S'$. 

\begin{lemma} Consider a directed graph $\Go$, let $S'$ be a saturating subset of $S(s,t,\Go)$ and suppose we reverse the orientation of every arc in $S'$. Call the resulting graph $\Gs$.  Then $d(s,t,\Go) < d(s,t, \Gs)$. \label{lemma:mainincr}
\end{lemma}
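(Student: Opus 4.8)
I want to show $d(s,t,\Gs) > d(s,t,\Go)$ whenever $S'$ is a saturating subset of $S(s,t,\Go)$ that we reverse. The natural approach is by contradiction: suppose $d(s,t,\Gs) \le d(s,t,\Go)$. Combined with Lemma~\ref{lemma:rev:nonincr} (which gives $d(s,t,\Gs) \ge d(s,t,\Go)$), this forces $d(s,t,\Gs) = d(s,t,\Go)$. Now I fix a shortest path $P$ from $s$ to $t$ in $\Gs$; its length is exactly $d(s,t,\Go)$. The goal is to extract from $P$ a contradiction with the saturating hypothesis, i.e. to produce a shortest $s$--$t$ path in $\Go$ that avoids every arc of $S'$, contradicting that $S'$ is saturating.

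\textbf{Key steps.} First, I claim $P$ uses no reversed arc. Indeed, if $P$ used a reversed arc, let $(p,q)$ be the \emph{last} reversed arc along $P$ and let $q$ be its head in $\Gs$. Then the prefix of $P$ up to $q$ is a shortest $s$--$q$ path in $\Gs$ ending in a reversed arc, so Lemma~\ref{lemma:rev:firstincr} gives $d(s,q,\Gs) > d(s,q,\Go)$, in fact $d(s,q,\Gs) \ge d(s,q,\Go)+2$. On the other hand, the suffix of $P$ from $q$ to $t$ uses only arcs of $\Go$ (since $(p,q)$ was the last reversed arc), so
\[
d(s,t,\Go) \le d(s,q,\Go) + d(q,t,P\text{-suffix}) \le \bigl(d(s,q,\Gs)-2\bigr) + \bigl(d(s,t,\Gs) - d(s,q,\Gs)\bigr) = d(s,t,\Gs) - 2,
\]
contradicting $d(s,t,\Gs) = d(s,t,\Go)$. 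Hence $P$ consists entirely of arcs present in $\Go$. Second, since $P$ is an $s$--$t$ walk in $\Go$ of length $d(s,t,\Go)$, it is a shortest $s$--$t$ path in $\Go$, so all its arcs lie in $S(s,t,\Go)$. Third — and this is where saturation enters — because $S'$ is saturating, $P$ must contain at least one arc of $S'$; but every arc of $S'$ was reversed and therefore is \emph{not} present in $\Gs$ (only its reversal is), contradicting that $P$ is a path in $\Gs$.

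\textbf{Main obstacle.} The delicate point is the bookkeeping in the first step: I must be careful that ``last reversed arc along $P$'' is well-defined and that the suffix of $P$ after it genuinely lies in $\Go$ — this is immediate once I note that an arc of $\Gs$ is either an unreversed arc of $\Go$ or the reversal of an arc of $S' \subseteq S(s,t,\Go)$, and only the latter kind fails to appear in $\Go$. I also need the small observation, already used in the proof of Lemma~\ref{lemma:rev:nonincr}, that no arc incoming to $s$ lies on a shortest path from $s$, so $s$ has no reversed incoming arc and the path $P$ genuinely starts by leaving $s$ along an arc of $\Go$; this keeps the prefix/suffix decomposition clean at the endpoint $s$. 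Everything else is routine triangle-inequality arithmetic on distances, and the two earlier lemmas do the real work.
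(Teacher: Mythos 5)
Your proof is correct and is essentially the same argument as the paper's: both proceed by contradiction assuming $d(s,t,\Go)=d(s,t,\Gs)$, and both use Lemma~\ref{lemma:rev:firstincr} applied to the last reversed arc on a shortest $s$--$t$ path in $\Gs$ together with the saturating hypothesis. The only difference is one of presentation order — you first establish that the shortest path in $\Gs$ uses no reversed arc (via Lemma~\ref{lemma:rev:firstincr}) and then contradict saturation, while the paper invokes saturation up front (its terse ``by definition this path must have an arc in $S'$'' is exactly the contrapositive of your first step) and then applies Lemma~\ref{lemma:rev:firstincr}; your write-up fills in that implicit step more carefully.
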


\begin{proof} We know from Lemma \ref{lemma:rev:nonincr} that $d(s,t, \Go) \leq d(s,t, \Gs)$. Suppose it was true that $d(s,t, \Go) = d(s,t, \Gs)$.
 Consider a shortest path from $s$ to $t$ in $\Gs$ and observe that by definition this path must have an arc in $S'$. 

Suppose the edge $(a,b)$ is the last arc on this path in $S'$. By Lemma \ref{lemma:rev:firstincr}, we have that $d(s,b,\Go) < d(s,b, \Gs)$. Since the shortest path from $b$ to $t$ does not involve any reversed edges, this proves the lemma. 
\end{proof}

\bigskip

\medskip

We now shift back to the study of the augmentation procedure. As a consequence of the lemma we have just proved, we will show that each round of the augmentation procedure increases the distance from $s$ to $t$ in the graph $G_{\rm flow}$. 

\begin{lemma} Let $M$ be an allowed matching and let $M'$ be the matching obtained by performing one iteration of the augmentation algorithm (i.e., by performing the augmentation procedure along a maximal collection of vertex-disjoint shortest paths from $s$ to $t$ in $G_{\rm flow}(M)$). Then \[ d(s,t, G_{\rm flow}(M)) < d(s,t, G_{\rm flow}(M')). \]  \label{lemma:stincr}
\end{lemma}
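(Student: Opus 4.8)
The plan is to reduce this statement to the purely graph-theoretic Lemma \ref{lemma:mainincr} by identifying the right ``old'' and ``new'' graphs and the right saturating set of arcs. The key observation, already flagged in the text preceding the lemma, is that performing the augmentation procedure along a collection of paths amounts to reversing the orientation of all the \emph{core} edges on those paths, and then making bookkeeping changes to the non-core edges incident on $s$, $t$, the $s_i$, and the $n^{(i)}_j$ that are forced by which destination nodes are now matched/unmatched. So I would first take $\Go = G_{\rm flow}(M)$, let ${\cal P}$ be the maximal collection of vertex-disjoint shortest $s$-$t$ paths used in the iteration, and let $S'$ be the set of core arcs lying on the paths of ${\cal P}$ (equivalently, all arcs of ${\cal P}$ that go between a source node and a destination node). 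Since ${\cal P}$ is a \emph{maximal} vertex-disjoint collection of shortest paths, $S'$ is a saturating subset of $S(s,t,\Go)$: any shortest $s$-$t$ path disjoint in its internal vertices from all of ${\cal P}$ could be added to ${\cal P}$, contradicting maximality, so every shortest path shares a vertex — hence, tracing through, an arc — with some path in ${\cal P}$; and the shared arc can be taken to be a core arc since the only non-core arcs on a path are the ones incident on $s,t,s_i,n^{(i)}_j$, which by the path structure are determined by the core arcs. Applying Lemma \ref{lemma:mainincr} to $\Go$ with this $S'$ gives $d(s,t,\Go) < d(s,t,\Gs)$, where $\Gs$ is $\Go$ with the arcs of $S'$ reversed.

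The remaining, and main, work is to relate $\Gs$ to $G_{\rm flow}(M')$. These two graphs are \emph{not} identical: $G_{\rm flow}(M')$ is built from scratch from the new matching $M'$, so its non-core edges (the edges from $s$ to unmatched-in-$M'$ source nodes and to the $s_i'$; the edges from the $s_i'$ into $[X_i']_{\rm dst}\cap F^c$; the edges from $U'(M')$ into $t$; the edges through the new $n^{(i)}_j$; and the non-core edges of $G_{\rm s}(M')$ from the unmatched node of each $[Y_i']_d$ to $[Y_i']_d\cap F^c$) reflect the new classification of source connected components into $X$'s, $Y$'s, $Z$'s and the new set of unmatched destination nodes, whereas $\Gs$ still carries the \emph{old} non-core edges. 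The plan is to show that $\Gs$ and $G_{\rm flow}(M')$ have the same core edges (immediate: both are the splitting with $M'$'s core edges reversed), and then argue that the difference in non-core edges does not decrease the $s$-$t$ distance. Concretely I would show $d(s,t,G_{\rm flow}(M')) \ge d(s,t,\Gs)$ by exhibiting, for any shortest $s$-$t$ path in $G_{\rm flow}(M')$, a walk in $\Gs$ from $s$ to $t$ of no greater length; this uses Lemma \ref{lemma:donot} to control how the $X/Y/Z$ classification and the matched/unmatched status of source connected components can change under one augmentation round (item (1): source nodes matched in $M$ stay matched in $M'$; item (2): an $X_i$ or $Y_i$ component keeps at most one unmatched node; item (3): no component goes from $\ge 1$ unmatched to $0$ unmatched), so that the ``new'' non-core edges can be simulated by the ``old'' ones composed with at most one reversed core edge, never shortcutting.

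Putting these together: $d(s,t,G_{\rm flow}(M)) = d(s,t,\Go) < d(s,t,\Gs) \le d(s,t,G_{\rm flow}(M'))$, which is the claim. The step I expect to be the real obstacle is the second one — carefully checking that replacing the old non-core edges ($s$-to-source, $s$-to-$s_i$, $s_i$-to-$[X_i]_d$, $[U'(M)]_d$-to-$t$, the $n^{(i)}_j$ gadget, and the $[Y_i]_d$ internal edges) by the new ones cannot create a shorter $s$-$t$ route in $G_{\rm flow}(M')$ than exists in $\Gs$. This requires a somewhat tedious case analysis keyed to where along a new shortest path the first and last core nodes lie, and invoking Lemma \ref{lemma:donot} each time to certify that the corresponding old non-core connection (possibly with a one-edge detour through a core edge reversed in the augmentation) is present in $\Gs$ with no shorter length. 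The first step (the saturating/maximality argument and the appeal to Lemma \ref{lemma:mainincr}) is essentially routine once the core/non-core split is in hand.
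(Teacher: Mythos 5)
The main gap in your plan is the saturating claim. You assert that since $\mathcal P$ is a maximal vertex-disjoint collection of shortest $s$--$t$ paths, the set $S'$ of core arcs on $\mathcal P$ is automatically a saturating subset of $S(s,t,G_{\rm flow}(M))$, via the chain ``shares a vertex, hence shares an arc, hence shares a core arc.'' The step from ``shares a vertex'' to ``shares an arc'' is not valid in $G_{\rm flow}(M)$, and the paper explicitly calls this out: ``it is strictly speaking not true that the maximal vertex-disjoint paths reversed by the augmentation procedure will be saturating.'' Concretely, an unmatched destination node $u_{\rm dst}$ in some $[Z_j]_d$ can have several outgoing arcs to distinct nodes $n^{(i)}_1, n^{(i)}_2, \ldots$, and two shortest $s$--$t$ paths can both pass through $u_{\rm dst}$ while entering it from different source nodes and leaving it to different $n^{(i)}_k$'s; such paths share the vertex $u_{\rm dst}$ but no arc, core or otherwise. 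The paper's fix is to enlarge the reversed set: whenever the augmentation reverses $(e_{\rm dst}, n^{(i)}_j)$, it reverses \emph{all} outgoing arcs from $e_{\rm dst}$ (all of which go to some $n^{(i)}_k$ anyway, so this costs nothing — they get deleted by bookkeeping), and then a case analysis over which vertex is shared (cannot be $s_i$ or $n^{(i)}_j$ by degree constraints, cannot be a matched destination or a source node by uniqueness of out/in arcs, and for an unmatched destination the enlargement supplies the needed arc) proves the enlarged set is saturating. Your argument needs this fix or something equivalent; without it, Lemma \ref{lemma:mainincr} does not apply.

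A secondary, more structural difference: your plan reverses in the graph $G_{\rm flow}(M)$ and then tries to show directly that $d(s,t,G_{\rm flow}(M')) \ge d(s,t,\Gs)$ by simulating walks. The paper instead decomposes the passage from $G_{\rm flow}(M)$ to $G_{\rm flow}(M')$ into six ordered elementary steps — first \emph{add} the new non-core $[Y_j]_d$-internal edges, then reverse a saturating collection, then delete the stale $s_i$'s, $s$-to-source links, $n^{(i)}_j$ nodes and $[Y_j]_d$ edges — and argues each step separately does not decrease the $s$--$t$ distance (with step (2) strictly increasing it). The ordering is deliberate: doing the edge additions \emph{before} the reversal lets the paper verify that the newly added edges cannot lie on a shortest $s$--$t$ path in $G_{\rm flow}(M)$ and hence that the maximal collection of vertex-disjoint shortest paths is unchanged, so the reversal lemma applies cleanly. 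Your ``simulate walks in $\Gs$'' approach is not obviously wrong, but it would need to re-derive essentially the same case analysis (keyed to Lemma \ref{lemma:donot}) in a less modular form; and in any case it cannot get started without first closing the saturating gap above.
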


\begin{proof} Let $Q$ be the set of edges $(a_{\rm dst}, b_{\rm dst})$ on the maximal vertex-disjoint collection of shortest paths in $G_{\rm flow}(M)$ from $s$ to $t$ on which the augmentation is made. We claim that $G_{\rm flow}(M')$ may be directly constructed from $G_{\rm flow}(M)$ via a sequence of steps, each of which can be described as follows:

\begin{enumerate} \item Add edges from each $b_{\rm dst}$ such that $(a_{\rm dst}, b_{\rm dst})$ belongs to $Q$ for some $a_{\rm dst}$ to every other node in the $[Y_j]_d$ containing $b_{\rm dst}$ which belongs to $F^c$. 
 \item Reverse all edges along a saturating collection of shortest paths from $s$ to $t$.\item Deletes some of the nodes $s_i$ and all their incoming and outgoing edges.
\item Delete the links from $s$ to some of the source nods. 
\item Decrease the number of nodes $n^{(i)}_j$ corresponding to each source strongly connected component remove
some of the edges going to these to account for changes in the set of matched nodes. 
\item Delete all edges $(a_{\rm dst}, c_{\rm dst})$ where $(a_{\rm dst}, b_{\rm dst})$ belongs to $Q$ for some $b_{\rm dst}$. 
\end{enumerate}

Indeed, with the exception of the addition of the adjective {\em saturating} in step (2), this is simply a recitation of all the ways in which $G_{\rm flow}(M')$ differs from $G_{\rm flow}(M)$. Let us describe the changes made in the augmentation procedure and note how they correspond to the above list. 

Consider the edges $(a_{\rm dst}, b_{\rm dst})$ taken in some source connected component $[Y_j]_d$. Then this source connected component still has a single unmatched node after the augmentation procedure. Since the new unmatched node is $b_{\rm dst}$, we need to add edges going from it to every other node in $[Y_j]_d \cap F^c$ - this is done in step (1). We also need to remove edges which are going from $a_{\rm dst}$ to nodes in $[Y_j]_d \cap F^c$ -  this is done in step 6. Moreover, some components which had no unmatched nodes now have unmatched nodes, and the corresponding vertices $s_i$ need to be deleted. This is step (3) 
Furthermore, some source nodes which were unmatched become matched and we need to delete links going to them from $s$ - this is step 4. When a source connected component changes the number of its unmatched nodes, the number of nodes $n^{(i)}_j$ needs to be adjusted - this is step 5.  Note that destination nodes outside of $[X_i]_d$ can only go from unmatched-to-matched as a result of
the augmentation procedure, so the number of $n^{(i)}_j$ can only decrease, and edges going from destination nodes to nodes
$n^{(i)}_j$ can only need to be deleted.

Finally, that this is a complete listing follows from the fact that several things do not happen:
\begin{itemize} \item By Lemma \ref{lemma:donot}, item (1) we do not need to add any edges going from $s$ to any source nodes. 
\item By Lemma \ref{lemma:donot}, item (2) no new nodes $n^{(i)}_j$ need to be introduced.
\item By Lemma \ref{lemma:donot}, item (3) we do not need to add any nodes $s_i$. 
\item Since no destination nodes outside of some $[X_i]_d$ goes from matched to unmatched as a result of the augmentation procedure, we do not need to add edges going from newly unmatched destination nodes to $t$. 
\end{itemize}

To summarize, it is only the correspondence of item (2) to the augmentation procedure which is nontrivial. While it is immediate that augmentation on a collection of paths has the effect of reversing all the core edges on  those paths, two things need to be shown. First, we need to show that the maximal collection of vertex-disjoint shortest paths in the augmentation procedure is {\em still} a maximal collection of vertex-disjoint shortest paths from
$s$ to $t$ once we've added extra edges in step (1). Secondly, we need to argue that reversing the edges in this maximal {\em vertex-disjoint} collections of shortest paths from $s$ to $t$ can be thought of as reversing the edges in a {\em saturating} collection of shortest paths from $s$ to $t$.

Indeed, that the shortest path collection used by the augmentation procedure is still a maximal collection of vertex-disjoint shortest paths after extra edges are added in step (1), simply observe that none of the extra edges can be on a shortest path from $s$ to $t$ since they go from $b_{\rm dst}$ to $c_{\rm dst}$ where $(a_{\rm dst}, c_{\rm dst})$ already existed and $a_{\rm dst}$ had a shorter distance from $s$ than $b_{\rm dst}$ since the edge $(a_{\rm bst}, b_{\rm dst})$ was on a shortest path from $s$ to $t$ in $G_{\rm flow}(M)$.

As for saturating, it is strictly speaking not true that the maximal vertex-disjoint paths reversed by the augmentation procedure will be saturating. Hence we will use the following trick: in step (2) let us reverse more edges than just done by the augmentation procedure. Specifically, if the augmentation procedure reversed the edge $(e_{\rm dst}, n^{(i)}_j)$, then we reverse {\bf all} links going out from $e_{\rm dst}$. Note that $e_{\rm dst}$ is unmatched in this case and all of its outgoing links go to some nodes $n^{(i)}_k$, so that this additional reversal of edges makes no difference: the extra edges we reverse will get deleted in step (5) anyway as we account for $e_{\rm dst}$ becoming matched. We define ${\cal Q}$ to be the set of edges reversed in this way.

We argue that ${\cal Q}$ is indeed saturating. Indeed, suppose there is another shortest path $p'$ from $s$ to $t$ which is edge disjoint from the set of edges ${\cal Q}$. Since ${\cal Q}$ contains the maximal collection of vertex-disjoint paths from $s$ to $t$ found by the augmentation procedure, we have that $p'$ shares a vertex with some arc in ${\cal Q}$. This vertex cannot be some $s_i$ or some $n^{(i)}_j$, since the former has only one incoming edge while the latter has only one outgoing edge, and edge disjointness of $p'$ from ${\cal Q}$ is immediately contradicted. Thus it must be a core edge that $p'$ has in common with some path in ${\cal Q}$. 

Could it be a matched destination node? No, because the only outgoing link from a matched destination node $v_{\rm dst}$ lead to the source node $u_{\rm src}$ such that $(u_{\rm src}, v_{\rm dst})$ is in the twin matching of $M$, so that edge-disjointness from ${\cal Q}$ is immediately contradicted. Could it be an unmatched destination node $u_{\rm dst}$? No, because the only outgoing links from such a node lead only to $t$ or to one or more $n^{(i)}_j$. In the first case, edge-disjointness is immediately contradicted. In the second case, observe that all outgoing links from such a $u_{\rm dst}$ to some $n^{(i)}_j$ are in ${\cal Q}$ by the definition of ${\cal Q}$, so that edge disjointness is again contradicted. Could it be a source vertex? No, because source vertices have exactly one incoming arc in $G_{\rm flow}$ (source vertices without outgoing links in $M$ have an incoming arc from $s$ while source vertices with outgoing links in $M$ have a single incoming arc from a destination vertex). This proves that the action of the augmentation procedure may be thought of as reversing all edges along a saturating collection of arcs.

Having shown that indeed $G_{\rm flow}(M')$ can be constructed from $G_{\rm flow}(M)$ via steps (1)-(6), we now argue that performing these six steps 
increases distance from $s$ to $t$. Indeed, it is immediate that steps (3),(4),(6) cannot decrease the distance from $s$ to $t$. 
Step (5) cannot decrease distance from $s$ to $t$ because by Lemma \ref{lemma:donot}, item (2) no new nodes $n^{(i)}_j$ need to be added for source strongly connected components which did not previously have them. 

Now step (2) increases
the distance from $s$ to $t$ by one as a consequence of Lemma \ref{lemma:stincr}. As for step (1), it cannot decrease the distance from $s$ to $t$ because the added edges cannot lie on any shortest path from $s$ to $t$ due to the fact that since, once again, $(a_{\rm dst}, b_{\rm dst})$ lies on a shortest path from $s$ to $t$ in $G_{\rm flow}(M)$. This concludes the proof.

\end{proof}

%\begin{lemma} Suppose the optimal matching has cost $c^*$ which is strictly less than $c(M)$, where $M$ is a maximal matching.  Then $G_{\rm %flow}%(M)$ contains $c(M)-c^*$ vertex disjoint paths from $s$ to $t$. \label{lemma:pathnmbr}
%\end{lemma}

%\begin{proof} Consider the connected components of $\Delta(M,M^*)$. Clump them together as before, and we get that there are at least $\sqrt{n}$ that %decrease cost. Each is a path from $s$ to $t$ in $G_{\rm flow}(M)$. 
%\end{proof}

\bigskip

\medskip

Having proven the key lemmas - namely Lemma \ref{lemma:stincr} just shown as well as Lemma \ref{lemma:pathscount} proved earlier - it is now 
easy to complete the proof of our main result, Theorem \ref{thm}.

\begin{proof}[Proof of Theorem \ref{thm}] By Lemma \ref{lemma:pathscount}, once we cannot find a shortest path from $s$ to $t$ in $G_{\rm flow}$ to do the augmentation procedure, we have found the minimum cost allowed matching.  By Lemma \ref{lemma:stincr}, after $\sqrt{n}$ iterations of the algorithm, we have that the distance from $s$ to $t$ in $G_{\rm flow}$ is at least $\sqrt{n}$. Since the total number of nodes in $G_{\rm flow}$ is at most $3n+2 \leq 5n$, this means the number of vertex disjoint shortest paths from $s$ to $t$ is at most $5\sqrt{n}$ after $\sqrt{n}$ augmentations. By Lemma \ref{lemma:pathscount}, this means the matching at this point of the algorithm is at most $5\sqrt{n}$ more costly than the optimal, which means the optimal matching is found after $5\sqrt{n}$ more iterations of the augmentation algorithm. Thus the total number of iterations taken by the augmentation algorithm is bounded by $6 \sqrt{n}$.
\end{proof}

%\section{Conclusions\label{sec:concl}} 

\begin{small}

\end{small}

\begin{thebibliography}{99} 

\bibitem{aho} A.V. Aho, J.E. Hopcroft, J.D. Ullman, {\em Data Structures and Algorithms}, Addison-Wesley, 1983.

\bibitem{structured-followup-82} B.D. O. Anderson, H.-M. Hong, ``Structural controllability and matrix nets,'' 
{\em International Journal of Control}, vol. 35, no. 3, 1982.

\bibitem{cao-recent} M. Cao, S. Zhang, M.K. Camlibel, ``A class of uncontrollable diffusively coupled 
multiagent systems with multichain topologies,'' {\em IEEE Transactions on Automatic Control}, vol. 58, no. 2, 
2013.

\bibitem{CI12} {\em Control and Optimization Methods for Electric Smart Grids}, A. Chakraborrty, 
M. D. Ilic, ed., Springer, 2012

\bibitem{chapman} A. Chapman,  M. Mesbahi, ``Strong structural controllability of networked dynamics, ''
{\em Proceedings of the American Control Conference}. 2013.

\bibitem{chapman2} A. Chapman, M. Nabi-Abdolyousefi, M. Mesbahi, 
``On the controllability and observability of Cartesian prouct neworks,'' in
{\em Proceedings of the  IEEE Conference on Decision and Control}, 2012.

\bibitem{dion1} C. Commault , J.-M. Dion, ``Controllability through input addition for 
graph-based systems,'' {\em Proceedings of the IFAC Symposium on Systems Structure and
Control}, 2013.

\bibitem{dion-main} C. Commault, J.-M. Dion, ``Input addition and leader selection for the 
controllability of graph-based systems,'' {\em Automatica}, vo. 49, pp. 3322-3328, 2013.

\bibitem{dionrecent} C. Commault, J.-M. Dion, ``The Minimal Controllability Problem for 
Structured Systems,'' available at \url{http://hal.archives-ouvertes.fr/docs/01/06/49/61/PDF/MCP-IEEE-Twocol.pdf}

\bibitem{kyber} C. Commault, J. -M. Dion, J. W. van der Woude, ``Characterization of generic 
properties of linear structured systems for efficient computations,'' {\em Kybernetika}, vol. 38, no. 5, 
pp. 503-520, 2002. 

\bibitem{structured-followup-76a} J. Corfmat, A.S. Morse, ``Structurally controllable and structurally canonical systems,'' 
{\em IEEE Transactions on Automatic Control}, vol. 21, no. 1, pp. 129-131, 1976.

\bibitem{structured-followup-77} E.J. Davison, ``Connectability and structural controllability of composite systems,'' 
vol. 13, no. 2, pp. 109-123, 1977.

\bibitem{dion2} J.-M. Dion, C. Commault, ``Input addition for the controllability of graph-based systems,''
{\em Proceedings of thw 10th IEEE International Conference on Control and Automation}, 2013.

\bibitem{structured-survey} J.-M. Dion, C. Commault, J. van der Woude, ``Generic properties and control of linear
structured systems: a survey,'' {\em Automatica}, vol. 39, no. 7, pp. 1125-1144, 2003. 

\bibitem{comptop} H. Edelsbrunner, J. L. Harer, {\em Computational Topology}, American Mathematical Society, 2009.

\bibitem{Eg1} M. Egerstedt, S. Martini, M. Cao, K. Camlibel, A. Bicchi, 
``Interacting with networks: how does structure relate to controllability in single-leader
consensus networks?'' {\em IEEE Control Systems}, vol. 32, no. 4, pp. 66-73, 2012.

\bibitem{Eg3} M. Ji, A. Muhammad, M. Egerstedt, ``Leader-based multi-agent coordination: controllability and optimal control,''
{\em Proceedings of the American Control Conference}, 2006. 

\bibitem{jia} T. Jia, Y.-Y. Liu, E. Csoka, M. Posfai, J.-J. Slotine, A.-L. Barabasi, 
``Emergence of bimodality in controlling complex networks,'' {\em Nature Communications}, vol. 4, 
p. 1-6, 2013. 

\bibitem{structured-followup-76b} K. Glover, L.M. Silverman, ``Characterization of structural controllability,'' 
{\em IEEE Transactions on Automatic Control}, vol. 21, no.4, pp. 534-537, 1976.

\bibitem{hk} J.E. Hopcroft, R.M. Karp, ``An $n^{5/2}$ algorithm for maximum matchings in bipartite graphs,'' 
{\em SIAM Journal on Computing}, vol. 2, no. 4, pp. 225-231, 1973.

\bibitem{structured-followup-79a} S. Hosoe, K. Matsumoto, ``On the irreducibility condition in the structural controllability
theorem,'' {\em IEEE Transactions on Automatic Control}, vol. 24, no. 6, 1979.


\bibitem{lin} C.T. Lin, ``Structural controllability,''  {\em IEEE Transactions on Automatic Control}, vol. 19, no. 3, pp. 201-208, 1974.


\bibitem{slotine} Y.-Y. Liu, J.-J. Slotine, A.-L. Barabasi, ``Controllability of complex networks,'' {\em Nature},  vol. 473, pp. 167-173, 2011. 

\bibitem{LSB12} Y.-Y. Liu, J.-J. Slotine, A.-L. Barabasi, ``Control centrality and hierarchical structure in complex networks,'' 
{\em PLoS One} 7:e44459, 2012.  

\bibitem{LSB13}  Y.-Y. Liu, J.-J. Slotine, A.-L. Barabasi, ``Observability of complex systems,'' {\em Proceedings of the National Academy
of Sciences}, vol. 110, no. 7, pp. 2460-2465, 2013

\bibitem{structured-followup-79b} H. Mayeda, T. Yamada, ``Strong structural controllability,'' {\em SIAM Journal on Control and Optimization},
vol. 17, no. 1, pp. 123-138, 1979.

\bibitem{structured-followup-81} H. Mayeda, ``On structural controllability theorem,'' {\em IEEE Transactions on Automatic Control}, 
vol. 26, no. 3, pp. 795-798, 1981. 

\bibitem{nima} N. Monshizadeh, S. Zhang, K. Camlibel, ``Zero forcing sets and controllability of dynamical systems defined on 
graphs,'' \url{http://arxiv.org/abs/1403.4868}.

\bibitem{murota} K. Murota. {\em System analysis by graphs and matroids}, Springer, 1987. 
Springer-Verlag, New York 1987.

\bibitem{nabi} M. Nabi-Abdolyousefi, M. Mesbahi, ``On the controllability properties of circulant networks,'' 
{\em IEEE Transactions on Automatic Control}, vol. 58, no. 12, 2013.

\bibitem{notar1} G. Notarstefano, G. Parlangeli, ``Controllability and observability of grid graphs via reduction and
symmetries,'' {\em IEEE Transactions on Automatic Control}, vol. 58, no. 7, pp. 1719-1731, 2013. 

\bibitem{previous} A. Olshevsky, ``Minimum controllability problems,'' {\em IEEE Transactions on Control of Network Systems}, in press.

\bibitem{structured-followup-84b} C.H. Papadimitriou, J. Tsitsiklis, ``A simple criterion for structurally fixed modes,'' {\em Systems and 
Control Letters}, vol. 4, no. 6, pp. 333-337, 1984. 

\bibitem{notar2} G. Parlangeli, G. Notarstefano, ``On the reachability and observabiliy of path and cycle
graphs,'' {\em IEEE Transactions on Automatic Control}, vol. 57, no. 3, pp. 743-748, 2012. 

\bibitem{bullo}  F. Pasqualetti, S. Zampieri, F. Bullo, ``Controllability metrics, limitations and algorithms for complex networks,'' 
{\em IEEE Transactions on Control of Network Systems,} vol. 1, no. 1, pp. 40-52, 2014.


\bibitem{pequito-main} S. Pequito, S. Kar, A. P. Aguilar, ``A framework for structural input/output and control configuration selection 
in large-scale systems,''  \url{http://arxiv.org/abs/1309.5868}.

\bibitem{pequito-main-acc} S. Pequito, S. Kar, A.P. Aguilar, ``A structured systems approach for optimal actuator-sensor placement in 
linear time-invariant systems,'' {\em Proceedings of the American Control Conference}, 2013.

\bibitem{pequito-dedicated} S. Pequito, N. Popli, S. Kar, M. D. Ilic, A. P. Aguiar, ``A framework for actuator placement in large scale 
power systems: minimal strong structural controllability,'' {\em Proceedings of the International Workshop on Computational Advances in 
Multi-Sensor Adaptive Processing}, 2013.

\bibitem{structured-followup-84} V. Pichai, M.E. Sezer, D.D. Siljak, ``A graph-theoretic characterization of structurally fixed modes,'' 
{\em Automatica}, vol. 20, no. 2, pp. 247-250, 1984.

\bibitem{pljb12} M. Posfai, Y.-Y. Liu, J.-J. Slotine, A.-L. Barabasi, ``Effect of correlations on network controllability,'' 
{\em Scientific Reports}. vol. 3, no 1067, pp. 1-7, 2013

\bibitem{Eg2} A. Rahmani, M. Ji, M. Mesbahi, M. Egerstedt, ``Controllability of multi-agent systems from a graph-theoretic
perspective,'' {\em SIAM Journal on Conrol and Optimization}, vol. 48, no. 1, pp. 162-186, 2009. 

\bibitem{R-12} I. Rajapakse, M. Groudine, M. Mesbahi,  ``What can systems theory of networks offer to biology?'' 
{\em  PLoS Computational Biology}, 8(6): e1002543, 2012.

\bibitem{mc} G. Ramos, S. Pequito, A.P. Aguilar, J. Ramos, S. Kar, ``A model checking framework for linear time invariant switching
systems using structural systems analysis,'' {\em Proceedings of the Allerton Conference on Communication, Control, and
Computing}, 2013.

\bibitem{npc} G. Ramos, S. Pequito, S. Kar, A.P. Aguilar, J. Ramos, ``On the NP-completeness of the minimal controllability
problem,'' \url{http://arxiv.org/abs/1401.4209}.

\bibitem{sp} R. Shields, J.B. Pearson, ``Structural controllability of multiinput linear systems,'' {\em IEEE Transactions on Automatic 
Control}, vol. 21, no. 2, pp. 203-212, 1976. 

\bibitem{structured-followup-81} M.E. Sezer, D.D. Siljak, ``Structurally fixed modes,'' {\em Systems and Control Letters}, 
vol. 1, no. 1, pp. 60-64, 1981. 

\bibitem{simon-mitter} J.D. Simon and S.K. Mitter, ``A theory of modal control,'' {\em Information and Control}, vol. 13, pp. 316-353, 1968.

\bibitem{lygeros} T.H. Summers, J. Lygeros, ``Optimal sensor and actuator placement in complex dynamical networks,'' 
\url{http://arxiv.org/abs/1306.2491}.

\bibitem{lygeros2} T.H. Summers, J. Lygeros, ``On submodularity and controllability in complex dynamical networks,''
\url{http://arxiv.org/abs/1404.7665}.

\bibitem{Motter1} J. Sun, A.E. Motter, ``Controllability transition and nonlocality in network control,'' 
{\em Physical Review Letters}, 110, 208701, 2013.

\bibitem{shreyas} S. Sundaram, C. Hadjicostis, ``Structural controllability and observability of linear systems over finite fields 
with applications to multi-agent systems,'' {\em IEEE Transactions on Automatic Control}, vol. 58, no. 1, pp. 60-73, 2013.

\bibitem{t04} H. G. Tanner, ``On the controllability of nearest neighbor interconnections,'' in {\em Proceedings of the  IEEE Conference on Decision and Control}, 2004.

\bibitem{Eg4} P. Twu, M. Egerstedt, S. Martini, ``Controllability of homogeneous single-leader networks,'' {\em Proceedings 
of the IEEE Conference on Decision and Control}, 2010.

\bibitem{Motter2} Y. Yang, J. Wang, A.E. Motter, ``Network observability transitions,'' 
{\em Physical Review Letters}, 109, 258701, 2012. 

\bibitem{structured-followup-86} J.L. Willems, ``Structural controllability and observability,'' {\em Systems and Control Letters}, 
vol. 8, no. 1, pp. 5-12, 1986. 

\bibitem{Z09} J. Zhu, {\em Optimization of Power System Operation}, Wiley, 2009. 

\end{thebibliography}
\end{document}